\declaretheorem[style=definition,qed=$\dashv$,numberwithin=section]
{definition}
\declaretheorem[style=plain,sibling=definition]{theorem}
\declaretheorem[style=plain,sibling=definition]{lemma}
\declaretheorem[style=plain,sibling=definition]{question}
\declaretheorem[style=plain,sibling=definition]{conjecture}
\declaretheorem[style=plain,sibling=definition]{corollary}
\declaretheorem[style=definition,sibling=definition]{remark}
\declaretheorem[style=plain,sibling=definition]{claim}
\declaretheorem[style=plain,sibling=definition]{claim*}
\titleformat{\section}{\normalsize\centering}{\thesection.}{1em}{}
\titleformat{\subsection}{\normalsize\centering}{\thesubsection.}{1em}{}
\titleformat{\subsubsection}{\normalsize}{\thesubsubsection.}{1em}{}
\numberwithin{equation}{section}
\newcommand{\rest}{\restriction}
\newcommand{\la}{\langle}
\newcommand{\ra}{\rangle}
\newcommand{\cp}{{\rm cp }}
\newcommand{\lh}{{\rm lh}}
\newcommand{\crt}{{\rm crt}}
\newcommand{\Ult}{{\rm Ult}}
\def\k{\kappa}
\def\a{\alpha}
\def\d{\delta}
\newcommand{\forces}{\Vdash}
\renewcommand{\models}{\vDash}
\newcommand{\powerset}{{\wp }}
\def\P{{\mathcal{P} }}
\def\W{{\mathcal{W} }}
\def\Q{{\mathcal{ Q}}}
\def\R{{\mathcal R}}
\def\M{{\mathcal{M}}}
\def\T {{\mathcal{T}}}
\def\U{{\mathcal{U}}}
\def\S{{\mathcal{S}}}
\newcommand{\rlem}[1]{Lemma~\ref{#1}}
\newcommand{\lex}{\mathrm{lex}}
\newcommand{\Coll}{\mathrm{Col}}
\title{Partial Tower Sealing}
\author{Grigor Sargsyan \footnote{Institute of Mathematics of Polish Academy of Sciences, Warsaw, Poland. Email: gsargsyan@impan.pl}\thanks{The first author's work is funded by the National Science Center, Poland under the Weave-Unisono Call, registration number UMO-2023/05/Y/ST1/00194.} \\ Nam Trang \footnote{University of North Texas, Denton, TX, USA. Email: nam.trang@unt.edu} \thanks{The second author is supported by NSF CAREER grant DMS-1945592.}}
\begin{document}
\maketitle

\begin{abstract}
The main result of this paper shows that a weak form of Tower Sealing holds in a generic extension of hod mice with a strong cardinal and a proper class of Woodin cardinals. We show Tower Sealing fails in such extensions in general. We show that this weak form of Tower Sealing (called \textit{Partial Tower Sealing}) implies Sealing and that its consistency strength is below that of $\sf{ZFC} + $``there is a Woodin limit of Woodin cardinals".
\end{abstract}


\section{Introduction}

This paper formulates a weak form of Woodin's \textit{Tower Sealing, Partial Tower Sealing} cf. Definition \ref{def:WTS}, and shows that this form of Tower Sealing implies \textit{Sealing} under various circumstances. The main result of this paper is Theorem \ref{thm:weakTS}, which shows that Partial Tower Sealing can hold in hod mice; as a result, Partial Tower Sealing is consistent relative to the theory $\sf{ZFC} + $``there is a Woodin limit of Woodin cardinals" ($\sf{WLW}$).

Suppose $g$ is a $V$-generic filter. Let $\Gamma_g^\infty$ be the class of all universally Baire sets in $V[g]$. When $V[g] = V$, we simply write $\Gamma^\infty$. For a cardinal $\kappa$, we write $\mathbb{Q}_{<\kappa}$ for the countable tower forcing as defined in \cite[Definition 2.7.1]{StationaryTower}. 

\begin{definition}\label{def:WTS}
Suppose there is a proper class of Woodin cardinals. Let $\delta$ be a Woodin cardinal. We say that \textit{Partial Tower Sealing} holds at $\delta$ if whenever $g$ is $<\delta$-generic over $V$  and $G\subseteq \mathbb{Q}_{<\delta}$ is $V[g]$-generic, letting $j_G: V[g]\rightarrow M \subseteq V[g][G]$ be the associated generic embedding, then 
\begin{enumerate}
\item $L(\Gamma_g^\infty)\cap \powerset(\mathbb{R}) = \Gamma_g^\infty$.
\item $(\Gamma^\infty_g)^\sharp, (\Gamma^\infty_{g*G})^\sharp$ exist and there is an elementary embedding $l: L(\Gamma_{g*G}^\infty)\rightarrow L(j_G(\Gamma_g^\infty))$ such that $l\rest \Gamma_{g*G}^\infty = \textrm{id}$ and $l$ is an order-preserving surjection from the class of indiscernibles of $L(\Gamma_{g*G}^\infty)$ to the class of indiscernibles of $L(j_G(\Gamma_g^\infty))$.
\end{enumerate}
If Partial Tower Sealing holds at $\delta$ and additionally, $\Gamma_{g*G} = j_G(\Gamma_g^\infty)$, then we say \textit{Tower Sealing} holds at $\delta$.
\end{definition}

\begin{definition}\label{dfn:ub_sealing} Suppose there is a proper class of Woodin cardinals. We say \textit{Sealing} holds at a Woodin cardinal $\delta$ if the following statements hold.
\begin{enumerate}
\item For every $< \delta$ generic $g$ over $V$,  $\powerset(\mathbb{R}_g)\cap L(\Gamma^\infty_g, \mathbb{R}_g)=\Gamma^\infty_g$.
\item  For every $<\delta$ generic $g$ over $V$, for every $<\delta$ generic $h$ over $V[g]$, there is an elementary embedding 
\begin{center}
$j: L(\Gamma^\infty_g, \mathbb{R}_g)\rightarrow L(\Gamma^\infty_{g*h}, \mathbb{R}_h)$.
\end{center}
\end{enumerate}
 such that for every $A\in \Gamma^\infty_g$, $j(A)=A^h.$  
 \end{definition}

We note that if clause (1) of both Definitions \ref{dfn:ub_sealing} and \ref{def:WTS} holds, then $$L(\Gamma_g^\infty)\models \sf{AD}^+.$$ This is by a theorem of Woodin, \cite[Section 3.3]{StationaryTower}, which states that if there is a proper class of Woodin cardinal and if $A\in \Gamma^\infty$, then $L(A,\mathbb{R})\models \sf{AD}^+$. Moreover, by a theorem of Steel, \cite{DMT}, every $A\in \Gamma^\infty$ has a scale in $\Gamma^\infty$, this implies that 
\begin{equation}\label{eqn:scale}
L(\Gamma^\infty)\models ``\textrm{every set of reals is Suslin}".
\end{equation}

 \textit{Sealing} is a form of Shoenfield-type generic absoluteness for the theory of universally Baire sets. Sealing is an important hypothesis in set theory and particularly in inner model theory. \cite{StrengthuB} has a detailed discussion on the importance of Sealing and related topics; so we only summarize some main points here. If a large cardinal theory $\phi$ implies Sealing then the Inner Model Program for building canonical inner models of $\phi$ cannot succeed (at least with the criteria for defining ``canonical inner models" as is done to date), cf \cite[Sealing Dichotomy]{StrengthuB}. Sealing signifies a place beyond which new methodologies are needed in order to advance the Core Model Induction techniques. In particular, to obtain consistency strength beyond Sealing from strong theories such as the Proper Forcing Axiom, one needs to construct canonical subsets of $\Gamma^\infty$ (third-order objects), instead of elements of $\Gamma^\infty$ like what has been done before (see \cite[Section 1]{StrengthuB} for a more detailed discussion). The consistency of Sealing was first proved by Woodin, who showed that if there is a proper class of Woodin cardinals and a supercompact cardinal $\k$ then Sealing holds after collapsing $2^{2^\k}$ to be countable. Woodin's proof can be found in \cite{StationaryTower}. \cite{sargsyan2024exact,sargsyan2021sealing} show that Sealing holds in hod mice and various types of hybrid mice whose existence is consistent relative to $\sf{WLW}$, which improves significantly Woodin's result. 

Woodin \cite[Theorem 3.4.17]{StationaryTower} also obtains the consistency of Tower Sealing from a supercompact cardinal and a proper class of Woodin cardinals. \cite{sargsyan2024exact} claims Tower Sealing holds in an excellent hybrid premouse (defined in \cite{sargsyan2024exact}), but this is not true. Part of this paper's motivation is to correct this, cf. Theorem \ref{thm:TSfail}. This leads us to the formulation of Partial Tower Sealing, a weak form of Tower Sealing strong enough to imply Sealing in various circumstances, cf. Theorem \ref{thm:weakTSimpliesSealing}, and weak enough to hold in hod mice. The proof that this form of Tower Sealing holds in such hod mice is given in Theorem \ref{thm:weakTS}. It is not known whether Tower Sealing can hold in hod mice at the moment.
 

\begin{theorem}\label{thm:weakTS}
Suppose $(\P,\Psi)$ is an lbr hod pair or a layered hod pair such that $\P \models $``there is a strong cardinal and a proper class of Woodin cardinals". Let $\kappa$ be the least strong cardinal of $\P$ and $g\subset Coll(\omega,\kappa^+)$ be $\P$-generic. Then $$\P[g] \models ``\forall \delta \ \textrm{ if } \delta \textrm{ is Woodin, then Partial Tower Sealing holds at } \delta."$$
\end{theorem}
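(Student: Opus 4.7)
The plan is to combine the hod-mouse Sealing theorems of \cite{sargsyan2021sealing, sargsyan2024exact} with an analysis of how the countable tower $\mathbb{Q}_{<\delta}$ interacts with the hod pair $(\P,\Psi)$. Fix a Woodin cardinal $\delta>\kappa$ in $\P[g]$, a $<\delta$-generic $h$ over $\P[g]$, a $\mathbb{Q}_{<\delta}$-generic $G$ over $\P[g][h]$, and let $j_G\colon \P[g][h]\to M$ be the induced generic embedding. Clause (1) of Definition \ref{def:WTS} at $\delta$ is precisely Sealing at $\delta$, which the hod-pair Sealing theorem delivers in the present setting because $\P$ is an lbr or layered hod pair with a strong cardinal and a proper class of Woodins. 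The sharps $(\Gamma_h^\infty)^\sharp$ and $(\Gamma_{h*G}^\infty)^\sharp$ exist because $\P$ carries a proper class of measurable cardinals above $\kappa$ (as limits of Woodins on its extender sequence), which survive in $\P[g][h]$ and $\P[g][h][G]$ and iterate any ordinal-definable class to produce Silver indiscernibles.

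The real work is the embedding $l$ in clause (2). The crucial input is the strategy-capturing description of universally Baire sets that underlies the Sealing proofs: both $\Gamma_{h*G}^\infty$ (as computed in $\P[g][h][G]$) and $j_G(\Gamma_h^\infty)$ (as computed in $M$) arise as direct limits of term interpretations over $\Psi$-iterates of $\P$, respectively over $j_G(\Psi)$-iterates of $j_G(\P)$, that absorb the ambient generic. Since $M\subseteq \P[g][h][G]$ and $j_G$ is elementary, every $\Psi$-iterate of $\P$ inside $\P[g][h][G]$ witnessing membership of a set $A$ in $\Gamma_{h*G}^\infty$ is also recognized inside $M$ as a $j_G(\Psi)$-iterate of $j_G(\P)$ computing the same interpretation. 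This gives a canonical pointwise-identity inclusion $\Gamma_{h*G}^\infty\subseteq j_G(\Gamma_h^\infty)$, from which $l\rest\Gamma_{h*G}^\infty=\textrm{id}$ will follow.

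To lift $l$ to the $L$-hulls, use the sharps: each of $L(\Gamma_{h*G}^\infty)$ and $L(j_G(\Gamma_h^\infty))$ is the Skolem hull of its underlying set of universally Baire sets together with a proper class of indiscernibles. The indiscernibles on either side can be represented by the measurable cardinals of $\P$, respectively of $j_G(\P)$, lying above $\delta$, respectively above $j_G(\delta)$. The tower $\mathbb{Q}_{<\delta}$ concentrates on countably-indexed substructures, so $j_G$ preserves the high-up measurable structure modulo its image map, giving a natural order-preserving bijection between the two classes of indiscernibles. Define $l$ to be the identity on $\Gamma_{h*G}^\infty$, to implement this bijection on indiscernibles, and to extend uniquely via Skolem functions; elementarity and surjectivity on indiscernibles are then automatic from indiscernibility on both sides.

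The main obstacle is the matching of indiscernibles. Theorem \ref{thm:TSfail} shows that $\Gamma_{h*G}^\infty$ can be a proper subset of $j_G(\Gamma_h^\infty)$, so strategy comparison cannot produce equality of underlying sets. What must be argued is that the extra universally Baire sets in $j_G(\Gamma_h^\infty)\setminus \Gamma_{h*G}^\infty$ only enrich the ``horizontal'' content of $L(j_G(\Gamma_h^\infty))$ without introducing any new indiscernibles beyond those already supplied by measurables of $\P$ (respectively $j_G(\P)$). Making this precise requires tracking which measurables survive in both $\P[g][h][G]$ and $M$ and invoking the hypothesis that $\P$ carries only a single strong cardinal below its Woodins -- it is precisely the presence of stronger large cardinals (e.g.\ supercompacts) in Woodin's original argument that upgrades the conclusion to full Tower Sealing, and whose absence here forces the weaker, merely ``$L$-level'' agreement captured by Partial Tower Sealing.
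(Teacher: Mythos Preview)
Your skeleton is right: define $l$ to be the identity on $\Gamma^\infty_{h*G}$, send indiscernibles to indiscernibles, and extend by Skolem terms. The paper does exactly this. But your claim that ``elementarity and surjectivity on indiscernibles are then automatic from indiscernibility on both sides'' is a genuine gap, and it is precisely where all the content lies. Indiscernibility tells you that for a fixed $A,x$ the truth of $\phi[A,x,s]$ in $L(\Gamma^\infty_{h*G})$ is independent of which $n$-tuple $s$ of indiscernibles you pick, and similarly in $L(j_G(\Gamma^\infty_h))$. It does \emph{not} tell you that the resulting truth value is the same in the two models. An inclusion $\Gamma_1\subseteq\Gamma_2$ of $\mathsf{AD}^+$ pointclasses in no way guarantees that the $(A,x)$-theory with $n$ indiscernibles agrees between $L(\Gamma_1)$ and $L(\Gamma_2)$; there is nothing ``automatic'' here.

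The paper proves this equivalence by an entirely different mechanism that you do not mention. Using the derived model representation from Section~\ref{sec:der_model}, it builds two $\Gamma^\infty$-genericity iterations of a single countable hull $M^0_0=N^0_0$: one with direct limit $M^\omega_0$ whose derived model is $L(\Gamma^\infty_{g*G})$, and one with direct limit $N^\omega_0$ whose derived model is $L(j_G(\Gamma^\infty_g))$. The key step is to arrange that both iterations use the \emph{same first extender} $E_0$, so that $M^1_0=N^1_0$. Indiscernibles on both sides are represented by the images of the first $m$ indiscernibles of $L[M^0_0|\kappa_0]$, and the equivalence $W\models\phi[u_m,\Lambda_X,x]\Leftrightarrow W'\models\phi[u_m,\Lambda_X,x]$ is proved by reflecting down to a forcing statement in $M^1_0[x]=N^1_0[x]$ and then back up along the other iteration. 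This common-iterate reflection is the entire argument; your proposal has nothing standing in its place.

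Two smaller points. First, your discussion of ``extra universally Baire sets only enriching horizontal content without introducing new indiscernibles'' misidentifies the obstacle: both models already have a proper class of indiscernibles, and the issue is matching theories, not counting indiscernibles. Second, the theorem does not assume $\P$ has only a single strong cardinal, and the paper's proof of Partial Tower Sealing makes no such use; that minimality hypothesis appears only in Theorem~\ref{thm:TSfail} when showing full Tower Sealing \emph{fails}.
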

\begin{remark}
In general, we cannot expect Tower Sealing to hold in generic extensions of hod mice. See Theorem \ref{thm:TSfail}.
\end{remark}

From the hypothesis of Theorem \ref{thm:weakTS} and recent work of the first author, we immediately obtain the following corollary.

\begin{corollary}\label{cor:weakTSstrength}
Partial Tower Sealing is consistent relative to $\sf{ZFC} + $``there is a Woodin limit of Woodin cardinals".
\end{corollary}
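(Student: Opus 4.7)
Fix a Woodin cardinal $\delta$ of $\P[g]$, let $h$ be $<\delta$-generic over $\P[g]$, and let $G\subseteq \mathbb{Q}_{<\delta}$ be $\P[g][h]$-generic, with associated tower embedding $j_G:\P[g][h]\to M\subseteq \P[g][h][G]$. Write $\Gamma_0 = \Gamma^\infty_h$, $\Gamma_1=\Gamma^\infty_{h*G}$, and $\Gamma_2 = j_G(\Gamma_0)$. I split the verification of Definition \ref{def:WTS} into three tasks: (a) clause (1), $L(\Gamma_0)\cap \pow(\R)=\Gamma_0$; (b) the existence of $\Gamma_0^\sharp$ and $\Gamma_1^\sharp$; and (c) the construction of the elementary embedding $l:L(\Gamma_1)\to L(\Gamma_2)$ demanded by clause (2).

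For (a), the statement is essentially the first clause of Sealing at $\delta$. My plan is to invoke the Sealing results of \cite{sargsyan2021sealing,sargsyan2024exact}, which establish Sealing in hod mice of the form in the hypothesis. The key input to cite is the identification of $\Gamma_0$ with the pointclass captured by the derived model of a suitable iterate of $\P$ via $\Psi$, together with the Sealing argument driven by $\P$ being an lbr or layered hod pair. For (b), the existence of the sharps follows from the presence of measurable cardinals in $\P[g]$ above the level at which $\Gamma_0$ (resp.\ $\Gamma_1$) is captured, combined with the fact that each pointclass is captured by a bounded iterate of $\P$ so that it is essentially a set from the point of view of a sufficiently large measurable.

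For (c), I first verify $\Gamma_1\subseteq\Gamma_2$ by absoluteness of universally Baire tree representations plus the countable closure of $M$ in $\P[g][h][G]$. Next I identify the classes of Silver indiscernibles of $L(\Gamma_1)$ and $L(\Gamma_2)$: using the derived model description together with the fact that $\P^*:=j_G(\P)$ is a fine-structural iterate of $\P$, I aim to show there is a canonical order-preserving surjection from the indiscernibles of $L(\Gamma_1)$ onto those of $L(\Gamma_2)$ (in the background universe both classes should be clubs of ordinals determined by the respective sharps, and the iterate structure will match them up). Defining $l$ to be the identity on $\Gamma_1$, acting on indiscernibles by this canonical surjection, and extending by Skolem terms then yields the required $l$; elementarity reduces to matching of indiscernible types over $\Gamma_1$.

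The main obstacle will be precisely this matching of indiscernible types: $\Gamma_2\setminus\Gamma_1$ can be nonempty by Theorem \ref{thm:TSfail}, so one must argue that the additional sets in $\Gamma_2$ contribute no new indiscernible-theoretic information over $\Gamma_1$ for the embedding to be well-defined and elementary. The plan is to exploit that $\P^*=j_G(\P)$ is an iterate of $\P$, so the derived-model analysis runs in parallel on both sides, and the ``excess'' $\Gamma_2\setminus\Gamma_1$ arises from the extra closure of $M$ over $\P[g][h][G]$ supplied by the countable tower, in a $j_G$-definable way controlled by the hod mouse structure. Making this rigorous via fine-structural comparison of $\P$ with $\P^*$ and the derived-model machinery (in the style of \cite{sargsyan2024exact}) is expected to be the bulk of the technical work, and is precisely the step where the weakening from Tower Sealing to Partial Tower Sealing becomes necessary.
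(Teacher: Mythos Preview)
You have misidentified what needs to be proved. The corollary is not a self-contained result requiring a fresh verification of Partial Tower Sealing; the paper obtains it \emph{immediately} from two ingredients: (i) Theorem~\ref{thm:weakTS}, which asserts that Partial Tower Sealing holds in $\P[g]$ for any lbr or layered hod pair $(\P,\Psi)$ of the stated kind, and (ii) the existence of such a hod pair assuming $\mathsf{ZFC}+\mathsf{WLW}$ (the ``recent work of the first author'' alluded to just before the corollary). Your proposal addresses neither of these as a citation; instead it launches into an independent sketch of Theorem~\ref{thm:weakTS}, and it says nothing at all about (ii), which is the only step that actually converts the theorem into a relative consistency statement. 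Without (ii) there is no proof of the corollary, regardless of how well you re-argue (i).

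Even viewed as a sketch of Theorem~\ref{thm:weakTS}, your plan for part (c) has a real gap. The step ``$\Gamma_1\subseteq\Gamma_2$ by absoluteness of universally Baire tree representations plus countable closure of $M$'' does not work as stated: the trees or homogeneity systems witnessing $A\in\Gamma^\infty_{h*G}$ are large objects living in $\P[g][h][G]$, and countable closure of $M$ does not by itself place them in $M$. The paper's actual argument (Section~\ref{sec:PartialTS}) avoids this by using the derived-model representation: every $A\in\Gamma^\infty_{g*G}$ is Wadge-reducible to some strategy $\Lambda_X$ for a countable $X\prec V[g]_\lambda$, and since $V[g]_\lambda\prec V^M_{j_G(\lambda)}$, the same $X$ and $\Lambda_X$ make sense on the $M$-side. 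The elementarity of $l$ is then obtained by running two $\Gamma^\infty$-genericity iterations that share their first extender $E_0$ (so $M^1_0=N^1_0$), reducing the truth of $\phi[u_m,\Lambda_X,x]$ on both sides to a forcing statement over this common model. Your ``fine-structural comparison of $\P$ with $\P^*$'' is not the mechanism the paper uses, and your description of the indiscernible-matching step is too vague to constitute a plan.
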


The next theorem shows that Partial Tower Sealing implies Sealing holds at certain Woodin cardinals. The reader can see section \ref{sec:prelim} and section \ref{sec:WTSimpliesS} for the definition of $Hom^*_{g*G}$ and related notions.

\begin{theorem}\label{thm:weakTSimpliesSealing}
Suppose $\delta$ is a Woodin cardinal which is a limit of Woodin  cardinals with the property that whenever $g$ is $<\delta$-generic, $G\subseteq \mathbb{Q}_{<\delta}$ is $V[g]$-generic, then $\Gamma_{g*G}^\infty = Hom^*_{g*G}$. Suppose Partial Tower Sealing holds at $\delta$. Then Sealing holds at $\delta$.

\end{theorem}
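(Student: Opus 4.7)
The plan is to reduce each clause of Sealing at $\delta$ to the corresponding ingredient of Partial Tower Sealing, using the hypothesis $\Gamma^\infty = Hom^*$ to control how the tower-generic embeddings act on universally Baire sets. For clause (1), observe that $\mathbb{R}_g$ is trivially universally Baire, so $\mathbb{R}_g \in \Gamma^\infty_g$ and hence $L(\Gamma^\infty_g, \mathbb{R}_g) = L(\Gamma^\infty_g)$; clause (1) of Sealing at $\delta$ is then exactly clause (1) of Partial Tower Sealing at $\delta$.

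For clause (2), fix $h$ a $<\delta$-generic over $V[g]$, and route everything through a common tower extension. Pick $G \subseteq \mathbb{Q}_{<\delta}^{V[g*h]}$ generic over $V[g*h]$, and use the standard absorption of $<\delta$-sized forcings into the countable stationary tower at a Woodin to find $G^* \subseteq \mathbb{Q}_{<\delta}^{V[g]}$ generic over $V[g]$ with $V[g*G^*] = V[g*h*G]$. Inside this common extension we obtain two tower-generic embeddings $j_{G^*}\colon V[g] \to M$ and $j_G\colon V[g*h] \to N$. Applying clause (2) of Partial Tower Sealing to the pairs $(g, G^*)$ and $(g*h, G)$ produces elementary embeddings
\[
l_1\colon L(\Gamma^\infty_{g*h*G}) \to L(j_{G^*}(\Gamma^\infty_g)), \qquad l_2\colon L(\Gamma^\infty_{g*h*G}) \to L(j_G(\Gamma^\infty_{g*h})),
\]
each the identity on $\Gamma^\infty_{g*h*G}$ and an order-preserving bijection on the classes of indiscernibles. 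The hypothesis $\Gamma^\infty = Hom^*$ then ensures that each uB set admits a homogeneously Suslin representation, and hence that the tower-generic images of uB sets are again uB with the expected value: for $A \in \Gamma^\infty_g$, $j_{G^*}(A) = A^{G^*}$ is the canonical uB lift and lies in $\Gamma^\infty_{g*h*G}$; likewise $j_G(B) = B^G$ for $B \in \Gamma^\infty_{g*h}$. In particular $j_{G^*}(A) = A^{h*G} = j_G(A^h)$.

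Define the candidate Sealing embedding by
\[
j := \bigl(j_G \rest L(\Gamma^\infty_{g*h})\bigr)^{-1} \circ l_2 \circ l_1^{-1} \circ \bigl(j_{G^*} \rest L(\Gamma^\infty_g)\bigr),
\]
so that $j(A) = A^h$ for every $A \in \Gamma^\infty_g$ by the identity $j_{G^*}(A) = j_G(A^h)$. The main obstacle is verifying that this composite is total and elementary, i.e.\ that $j_{G^*}[L(\Gamma^\infty_g)] \subseteq l_1[L(\Gamma^\infty_{g*h*G})]$ and that $l_2 \circ l_1^{-1} \circ j_{G^*}$ lands in $j_G[L(\Gamma^\infty_{g*h})]$. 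Writing elements of $L(\Gamma^\infty_g)$ as Skolem terms $\tau(\vec A, \vec \iota)$ in the corresponding sharp, with $\vec A \in \Gamma^\infty_g$ and $\vec \iota$ indiscernibles, the uB coordinates behave correctly by the $Hom^*$ analysis above; the remaining indiscernibles-matching argument among $L(\Gamma^\infty_g)$, $L(\Gamma^\infty_{g*h})$, and $L(\Gamma^\infty_{g*h*G})$ is where the surjectivity-on-indiscernibles clause of Partial Tower Sealing becomes essential. Via $l_1$ and $l_2$ the indiscernibles of $L(\Gamma^\infty_{g*h*G})$ biject onto those of $L(j_{G^*}(\Gamma^\infty_g))$ and of $L(j_G(\Gamma^\infty_{g*h}))$ respectively, and composing these identifications with $j_{G^*}$ and $j_G$ furnishes the indiscernible bijection needed to extend $j$ elementarily to all of $L(\Gamma^\infty_g)$, thereby verifying clause (2) of Sealing at $\delta$.
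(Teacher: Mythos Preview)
Your overall strategy matches the paper's: route both $L(\Gamma^\infty_g)$ and $L(\Gamma^\infty_{g*h})$ through countable-tower extensions with the same reals, apply Partial Tower Sealing on each side to obtain the $l_i$ maps, and then define the Sealing embedding via the Skolem-term decomposition in the sharps, matching uB sets to their canonical lifts and indiscernibles via the bijection clause of Partial Tower Sealing. The paper does exactly this (with the cosmetic difference that it first isolates the intermediate maps $i_0,i_1$ into the common tower extension and proves their elementarity directly, rather than writing the Sealing map as a literal four-fold composite of partial inverses).

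There is, however, a genuine gap in how you deploy the hypothesis $\Gamma^\infty = Hom^*$. Your claim that one can arrange $V[g*G^*] = V[g*h*G]$ as \emph{models} is not standard; the absorption argument (the paper goes through an auxiliary $\mathrm{Coll}(\omega,<\delta)$-generic and cites Lemma~6.6 of Steel's DMT) only yields equality of the reals, $\mathbb{R}^{V[g*G^*]} = \mathbb{R}^{V[g*h*G]}$. It is precisely here that the hypothesis does its real work: $Hom^*_{g*G}$ depends only on $\mathbb{R}^*$ (each homogeneity system lives in $V[x]$ for some $x \in \mathbb{R}^*$), so equal reals gives $Hom^*_{g*G^*} = Hom^*_{g*h*G}$, and the hypothesis then upgrades this to $\Gamma^\infty_{g*G^*} = \Gamma^\infty_{g*h*G}$. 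Without that identification you cannot identify the domain $L(\Gamma^\infty_{g*G^*})$ of $l_1$ with the domain $L(\Gamma^\infty_{g*h*G})$ of $l_2$, and the composite $l_2 \circ l_1^{-1}$ is undefined. The purpose for which you \emph{do} invoke the hypothesis---that $j_{G^*}(A) = A^{G^*}$ and that this set is uB---is a standard fact about universally Baire sets under tower forcing and does not use the hypothesis at all. So the hypothesis is invoked at the wrong step; once you move it to the correct place, your proof becomes the paper's.
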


\begin{remark}\label{rem:hypoholds}
The hypothesis used in Theorem \ref{thm:weakTSimpliesSealing} holds in various important situations. For example, if $\delta$ is a Woodin limit of Woodin cardinals and strong cardinals, then whenever $G\subseteq \mathbb{Q}_{<\delta}$ is $V$-generic, $\Gamma_G^\infty = Hom^*_G$. Also, if $V$ is the universe of a hod mouse with a proper class of Woodin cardinals, then at every Woodin cardinal $\delta$ which is a limit of Woodin cardinals, whenever $G\subseteq \mathbb{Q}_{<\delta}$ is $V$-generic, $\Gamma_G^\infty = Hom^*_G$. See Section \ref{sec:WTSimpliesS} for more details.
    
\end{remark}

It is unclear whether Tower Sealing can hold in hod mice and whether Tower Sealing is consistent relative to $\sf{ZFC} + $``there is a Woodin limit of Woodin cardinals". In theorems \ref{thm:TSfail} and \ref{thm:TSfails2}, we provide further evidence that it seems very hard to force Tower Sealing to hold in hod mice. 

The paper is organized as follows. In Section \ref{sec:prelim} we review basic notions used in this paper. In Section \ref{sec:WTSimpliesS} we shows Partial Tower Sealing implies Sealing holds at certain Woodin cardinals. In Section \ref{sec:der_model}, we recall the derived model representation of $\Gamma^\infty$ in \cite{sargsyan2021sealing} and use it to prove the consistency of Partial Tower Sealing in Section \ref{sec:PartialTS}. In Section \ref{sec:TSfail} we prove Theorems \ref{thm:TSfail} and \ref{thm:TSfails2} which show that in general, Tower Sealing fails in hod mice. In Section \ref{sec:questions} we collect some open problems and questions related to the results of this paper.

\section{Preliminaries}\label{sec:prelim}
\subsection{Homogenously Suslin and universally Baire sets}
We say that a pair of trees $T,S$ are \textit{$\delta$-absolutely complementing} if for any poset $\mathbb{P}$ of size $\leq \d$, for any generic $g\subseteq \mathbb{P}$, $V[g]\models ``p[T]=\mathbb{R}-p[S]"$. Similarly, we say that $T,S$ are \textit{$<\delta$-absolutely complementing} if for any poset $\mathbb{P}$ of size $< \d$, for any generic $g\subseteq \mathbb{P}$, $V[g]\models ``p[T]=\mathbb{R}-p[S]"$. Given a limit of Woodin cardinals $\nu$ and $g\subseteq \Coll(\omega, <\nu)$, let
\begin{enumerate}
\item $\mathbb{R}^*_g=\bigcup_{\a<\nu}\mathbb{R}^{V[g\cap \Coll(\omega, \a)]}$, 
\item $Hom^*_g$ be the set of reals $A\in V(\mathbb{R}^*_g)$ such that for some $\a<\nu$, there is a pair $(T, S)\in V[g\cap \Coll(\omega, \a)]$ such that $V[g\cap \Coll(\omega, \a)]\models ``(T, S)$ are $<\nu$-complementing trees" and $p[T]^{V(\mathbb{R}^*_g)}=A$, and
\item the derived model associated with $g$ be defined by: $DM(g)=L(Hom^*_g, \mathbb{R}^*_g)$. 
\end{enumerate}

We now recall the notions of homogeneously Suslin and universally Baire sets. Given an uncountable cardinal $\kappa$, and a set $Z$, $meas_\kappa(Z)$ denotes the set of all $\kappa$-additive measures on $Z^{<\omega}$. If $\mu\in meas_\kappa(Z)$, then there is a unique $n<\omega$ such that $Z^n\in \mu$ by $\kappa$-additivity; we let this $n = dim(\mu)$. If $\mu,\nu\in meas_\kappa(Z)$, we say that \textit{$\mu$ projects to $\nu$} if $dim(\nu) = m \leq dim(\mu) = n$ and for all $A\subseteq Z^m$, 
\begin{center}
$A\in \nu \Leftrightarrow \{u : u\rest m\in A\} \in \mu$.
\end{center} 
In this case, there is a natural embedding from the ultrapower of $V$ by $\nu$ into the ultrapower of $V$ by $\mu$:
\begin{center}
$\pi_{\nu,\mu} : \Ult(V,\nu) \rightarrow \Ult(V,\mu)$
\end{center}
defined by $\pi_{\nu,\mu}([f]_\nu) = [f^*]_\mu$ where $f^*(u) = f(u\rest m)$ for all $u\in Z^n$. A tower of measures on $Z$ is a sequence $\langle \mu_n : n<k \rangle$ for some $k\leq \omega$ such that for all $m\leq n < k$, $dim(\mu_n) = n$ and $\mu_n$ projects to $\mu_m$. A tower $\langle \mu_n : n<\omega\rangle$ is \textit{countably complete} if the direct limit of $\{\Ult(V,\mu_n), \pi_{\mu_m,\mu_n}: m\leq n < \omega\}$ is well-founded. We will also say that the tower $\langle \mu_n : n<\omega\rangle$ is well-founded.

Recall we identify the set of reals $\mathbb{R}$ with the Baire space ${}^\omega \omega$.

\begin{definition}\label{dfn:homo}
Fix an uncountable cardinal $\kappa$. A function $\bar{\mu}: \omega^{<\omega} \rightarrow meas_\kappa(Z)$ is a \textit{$\kappa$-complete homogeneity system} with support $Z$ if for all $s,t\in \omega^{<\omega}$, writing $\mu_t$ for $\bar{\mu}(t)$:
\begin{enumerate}[(a)]
\item $dom(\mu_t) = dom(t)$,
\item $s\subseteq t \Rightarrow \mu_t$ projects to $\mu_s$.
\end{enumerate}
Often times, we will not specify the support $Z$; instead, we just say $\bar{\mu}$ is a $\kappa$-complete homogeneity system.

A set $A\subseteq \mathbb{R}$ is \textit{$\kappa$-homogeneous} iff there is a $\kappa$-complete homogeneity system $\bar{\mu}$ such that
\begin{center}
$A = S_{\mu} =_{def} \{ x : \bar{\mu}_x \textrm{ is countably complete} \}$.
\end{center}
$A$ is homogeneous if it is $\kappa$-homogeneous for all $\kappa$. Let $\textrm{Hom}_\infty$ be the collection of all homogeneous sets.
\end{definition}

\begin{definition}\label{dfn:ub}
$A\subseteq \mathbb{R}$ is \textit{$\kappa$-universally Baire} if there are trees $T,U \subseteq (\omega\times ON)^{<\omega}$ that are $\kappa$-absolutely complemented, i.e. $A = p[T] = \mathbb{R}\backslash p[U]$ and whenever $\mathbb{P}$ is a forcing such that $|\mathbb{P}|\leq \kappa$ and $g\subseteq \mathbb{P}$ is $V$-generic, in $V[g]$, $p[T] = \mathbb{R}\backslash p[U]$. In this case, we let $A_g = p[T]$ be the canonical interpretation of $A$ in $V[g]$.

$A$ is \textit{universally Baire} if $A$ is $\kappa$-universally Baire for all $\kappa$. Let $\Gamma^\infty$ be the collection of all universally Baire sets.
\end{definition}

We remark that if $A$ is $\kappa$-universally Baire as witnessed by pairs $(T_1, U_1)$ and $(T_2, U_2)$ and $\mathbb{P}$ such that $|\mathbb{P}|\leq \kappa$ and $g\subset \mathbb{P}$ is $V$-generic, then $A_g = p[T_1] = p[T_2]$, i.e. $A_g$ does not depend on the choice of absolutely complemented trees that witness $A$ is $\kappa$-universally Baire. A similar remark applies to $\kappa$-homogeneously Suslin sets; in other words, if $A = S_{\bar{\mu}}$ where the measures in $\bar{\mu}$ are $\kappa$-complete, then for any $<$-$\kappa$ generic $g$, the canonical interpretation $A_g$ is defined as $$(S_{\bar{\mu}})_g = \{x\in \mathbb{R}^{V[g]} : \bar{\mu}_x \textrm{ is countably complete in } V[g]\}.$$ 

Suppose there is a proper class of Woodin cardinals. The following are some standard results about universally Baire sets we will use throughout our paper. The proof of these results can be found in \cite{DMT}.

\begin{enumerate}[(I)]
\item Hom$_\infty = \Gamma^\infty$.
\item For any $A\in \Gamma^\infty$, $L(A,\mathbb{R})\models \sf{AD}^+$; furthermore, given such an $A$, there is a $B\in\Gamma^\infty$ such that $B\notin L(A,\mathbb{R})$ and $A\in L(B,\mathbb{R})$. In fact, $A^\sharp$ is an example of such a $B$. 
\item Suppose $A\in \Gamma^\infty$. Let $B$ be the code for the first order theory with real parameters of the structure $(HC,\in, A)$ (under some reasonable coding of $HC$ by reals). Then $B\in \Gamma^\infty$ and if $g$ is $V$-generic for some forcing, then in $V[g]$, $B_g\in \Gamma^\infty$ is the code for the first order theory with real parameters of $(HC^{V[g]},\in, A_g)$.
\item Every set in $\Gamma^\infty$ has a scale in $\Gamma^\infty$.
\end{enumerate}
Under the same hypothesis, the results above also imply that 
\begin{itemize}
\item $\Gamma^\infty$ is closed under Wadge reducibility, 
\item if $A\in \Gamma^\infty$, then $\neg A\in\Gamma^\infty$, 
\item if $A\in\Gamma^\infty$ and $g$ is $V$-generic for some forcing, then there is an elementary embedding $j: L(A,\mathbb{R})\rightarrow L(A_g, \mathbb{R}_g)$, where $\mathbb{R}_g = \mathbb{R}^{V[g]}$.
\end{itemize}

\subsection{Hod mice}

Suppose $(\P,\Psi)$ is a hod pair in the sense of \cite{normalization_comparison} and that $\P$ has a proper class of Woodin cardinals. We recall some properties of iteration strategies of certain countable elementary substructures $X\prec \P|\eta$ for some inaccessible $\eta$ proved in \cite{sargsyan2021sealing}. 

We adopt some notations from \cite{sargsyan2021sealing}. First, the pair $(\P,\Psi)$ is called \textit{an iterable pair} in \cite{sargsyan2021sealing}. Given a strong limit cardinal $\k$ and $F\subseteq Ord$, set 
\begin{center}
$W^{\Psi}_\k=(H_{\k}, \P|\k, \Psi_{\P|\k}\rest H_\k, \in)$.
\end{center}
Given a structure $Q$ in a language extending the language of set theory with a transitive universe, and an $X\prec Q$, we let $ M_X$ be the transitive collapse of $X$ and $\pi_X: M_X\rightarrow Q$ be the inverse of the transitive collapse. In general, the preimages of objects in $X$ will be denoted by using $X$ as a subscript, e.g. $\pi_X^{-1}(\P) = \P_X$. Also, if $\eta < \delta$, we write $\Psi_{\eta,\delta}$ for the fragment of $\Psi$ that acts on the window $(\eta,\delta)$. Suppose in addition $Q=(R,...\P,\Psi_{\eta,\delta},...)$. We will then write $X\prec (Q|\Psi_{\eta,\delta})$ to mean that $X\prec Q$ and the strategy of $\P_X$ that we are interested in is $\Psi_{\eta,\delta}^{\pi_X}$. We set $\Lambda_X=\Psi_{\eta,\delta}^{\pi_X}$. If $g$ is a generic over $V$, we write $\Psi_{\eta,\delta}^g$ for the canonical interpretation of $\Psi_{\eta,\delta}$ in $V[g]$ (if exists) and $\Lambda^g_X = (\Psi_{\eta,\delta}^g)^{\pi_X}$. 

By results of \cite{normalization_comparison}, $\Psi$ has all the properties required to run the constructions in \cite{sargsyan2021sealing}. In particular, results of \cite[Sections 2,3,4]{sargsyan2021sealing} can be applied to $(\P,\Psi)$. We summarize some key facts that we use in the constructions in Section \ref{sec:der_model}. We need develop some terminology to state these facts. In the following, we will write $V$ for the universe of $\P$ and the notions below will hold in $V$.

 Suppose $\nu$ is a Woodin cardinal. We let $\sf{EA}_\nu$ be the $\omega$-generator version of the extender algebra associated with $\nu$ (see e.g. \cite{steel2010outline} for a detailed discussion of Woodin's extender algebras).  We say the triple $(M, \d, \Phi)$ \textit{Suslin, co-Suslin captures}\footnote{This notion is probably due to Steel, see \cite{DMATM}.} the set of reals $B$ if there is a pair $(T, S)\in M$ such that $M\models ``(T, S)$ are $\d$-complementing" and 
 \begin{enumerate}
 \item $M$ is a countable transitive model of some fragment of $\sf{ZFC}$,
 \item $\Phi$ is an $\omega_1$-strategy for $M$,
 \item $M\models ``\d$ is a Woodin cardinal",
 \item for $x\in \mathbb{R}$, $x\in B$ if and only if there is an iteration $\T$ of $M$ according to $\Phi$ with last model $N$ such that $x$ is generic over $N$ for  $\textsf{EA}_{\pi^\T(\d)}^{N}$ and $x\in p[\pi^\T(T)]$.
 \end{enumerate}

Suppose $M$ is a countable transitive model of set theory and $\Phi$ is a strategy of $M$. Let $(\eta, g)$ be such that $g$ is $M$-generic for a poset in $M|\eta$. Let $\Phi'$ be the fragment of $\Phi$ that acts on iterations that are above $\eta$. Then $\Phi'$ can be viewed as an iteration strategy of $M[g]$. This is because if $\T$ is an iteration of $M[g]$ above $\eta$, there is an iteration $\U$ of $M$ that is above $\eta$ and such that 
\begin{enumerate}
\item $lh(\T)=lh(\U)$,
\item $\T$ and $\U$ have the same tree structure,
\item for each $\a<lh(\T)$, $M^\T_\a=M^\U_\a[g]$,
\item for each $\a<lh(\T)$, $E_\a^\T$ is the extension of $E_\a^\U$ onto $M_\a^\U[g]$.
\end{enumerate}
Let $\Phi''$ be the strategy of $M[g]$ with the above properties. We then say that $\Phi''$ is induced by $\Phi'$. We will often confuse $\Phi''$ with $\Phi'$. The following lemma is the key fact that we need for \ref{sec:der_model}; it is proved in \cite[Lemma 4.4]{sargsyan2021sealing}. Say $u=(\eta,\d, \lambda)$ is a \textit{good triple} if it is increasing, $\d$ is a Woodin cardinal, and $\lambda$ is an inaccessible cardinal. 

 \begin{lemma}\label{ub to strategy} Suppose $u=(\eta, \d, \lambda)$ is a good triple and $g$ is $V$-generic for a poset in $V_\eta$. Let $A\in \Gamma^\infty_g$.  Then, in $V[g]$, there is a club of countable $X\prec (W_\lambda[g]| \Psi_{\eta, \d}^g)$ such that $(M_X, \d_X, \Lambda^g_X)$ Suslin, co-Suslin captures $A$.\footnote{To conform with the above setup, we tacitly assume $\Lambda^g_X$ to be the iteration strategy acting on trees above $\eta_X$.} For each such $X$, let $X' = X\cap W_\lambda \prec W_\lambda$, and $(M_{X'}, \Lambda_{X'})$ be the transitive collapse of $X'$ and its strategy. Then $A$ is projective in $\Lambda_{X'}$. Moreover, these facts remain true in any further generic extension by a poset in $V_\eta[g]$. 
 \end{lemma}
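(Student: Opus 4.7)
The plan is to adapt the standard Steel-style Suslin, co-Suslin capturing argument to the present hod-pair context. Since $A\in \Gamma^\infty_g$, work in $V[g]$ and fix a pair of trees $(T,S)$ witnessing that $A$ is $\lambda$-universally Baire, with $p[T]=A$ and $p[S]=\mathbb{R}\setminus A$; by standard absorption they can be taken to lie in $V_\lambda[g]$ and to be named by objects in $V_\lambda$. Consider in $V[g]$ the set $\mathcal{C}$ of countable $X\prec (W_\lambda[g]\mid \Psi_{\eta,\d}^g)$ whose range contains $T$, $S$, and a name for $\d$; this is a club of countable submodels of $W_\lambda[g]$.

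Fix $X\in \mathcal{C}$, let $\pi_X\colon M_X\to W_\lambda[g]$ be the uncollapse with $\pi_X(\d_X,T_X,S_X)=(\d,T,S)$, and set $\Lambda^g_X=(\Psi_{\eta,\d}^g)^{\pi_X}$. For the capturing clause, given any $x\in \mathbb{R}^{V[g]}$ I would run Woodin's genericity iteration above $\eta_X$ following $\Lambda^g_X$, producing an iteration tree $\T$ with last model $N$ and embedding $\pi^\T\colon M_X\to N$ such that $x$ is $\sf{EA}^N_{\pi^\T(\d_X)}$-generic over $N$. Elementarity transports $(\pi^\T(T_X),\pi^\T(S_X))$ to an absolutely complementing pair in $N$ below $\pi^\T(\d_X)$, so $N[x]$ decides whether $x\in p[\pi^\T(T_X)]$. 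Because $\Lambda^g_X$ descends from $\Psi_{\eta,\d}^g$, the iteration lifts via the copy construction to a $\Psi_{\eta,\d}^g$-iteration of $\P|\d$, and the standard commuting-diagram argument shows $p[\pi^\T(T_X)]\subseteq p[T]=A$ and $p[\pi^\T(S_X)]\subseteq p[S]=\mathbb{R}\setminus A$ in $V[g]$. Hence $x\in p[\pi^\T(T_X)]$ iff $x\in A$, as required.

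For the projectivity claim, observe $X'=X\cap W_\lambda\prec W_\lambda$ in $V$ and $M_X=M_{X'}[g_{X'}]$ where $g_{X'}=\pi_X^{-1}(g)$; by the lifting described in the preliminaries, $\Lambda^g_X$ is the strategy of $M_{X'}[g_{X'}]$ induced by $\Lambda_{X'}$ on iterations above $\eta_{X'}$. Countable iterations of $M_{X'}$ code as reals, and the extender-algebra genericity clause together with the $\Sigma^1_1$ tree-projection conditions then exhibit membership in $A$ as a projective-in-$\Lambda_{X'}$ predicate. The ``moreover'' clause follows because if $h\in V_\eta[g]$ is a further generic then the universally Baire trees $T,S$ extend canonically to $V[g][h]$ while remaining absolutely complementing, $\Psi_{\eta,\d}^{g*h}$ canonically extends $\Psi_{\eta,\d}^g$, and the same $X$ together with genericity iterations against reals of $V[g][h]$ captures the canonical extension of $A$.

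The principal obstacle I expect is verifying that iterating $M_X$ by the pulled-back strategy $\Lambda^g_X$ correctly preserves the interpretation of the trees, so that $p[\pi^\T(T_X)]$ really equals $A$ and not merely a Wadge-reduct of it. This requires that the copy construction from $M_X$-iterations into $\P$-iterations commute with the interpretation of $(T,S)$, which in turn invokes the branch-condensation, hull-condensation, and normalization properties of $\Psi$ that are available precisely because $(\P,\Psi)$ is assumed to be an lbr or layered hod pair in the sense of \cite{normalization_comparison}.
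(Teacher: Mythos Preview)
The paper does not prove this lemma; it is quoted from \cite[Lemma~4.4]{sargsyan2021sealing}. Your outline is the standard Steel-style capturing argument that underlies that result: take absolutely complementing trees $(T,S)$ for $A$ inside a countable hull, run a genericity iteration above $\eta_X$ by the pulled-back strategy to make $x$ generic, and combine complementation in $N[x]$ with the inclusions $p[\pi^\T(T_X)]\subseteq A$ and $p[\pi^\T(S_X)]\subseteq \mathbb{R}\setminus A$ obtained via the copy maps. The projectivity and persistence clauses are handled exactly as you describe.

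The step you flag as the principal obstacle is indeed where the real content lies. Copying lifts the $M_X$-iteration to a $\Psi^g$-iteration $j$ of $\P[g]$ itself, and one must check that $p[j(T)]^{V[g]}=A$ rather than merely $p[j(T)]^{V^*[g]}=j(A)$ in the iterate $V^*[g]$. This is where the regularity properties of $\Psi$ supplied by \cite{normalization_comparison} (hull condensation, pullback consistency, and the resulting generic interpretability of $\Psi$) enter, ensuring that correct iterations of $\P$ do not disturb universally Baire interpretations; you correctly anticipate this.
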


\section{Partial Tower Sealing implies Sealing}
\label{sec:WTSimpliesS}

In this section, we prove Theorem \ref{thm:weakTSimpliesSealing} and give some applications of \ref{thm:weakTSimpliesSealing} and Partial Tower Sealing.

Suppose $\delta$ is a Woodin limit of Woodin cardinals and $G\subseteq \mathbb{Q}_{<\delta}$ is $V$-generic, we let $\mathbb{R}^*_G=\mathbb{R}^{V[G]}$ and $Hom^*_G$ be the set of $A\subseteq \mathbb{R}^*_G$ such that  for any $\gamma$, there is a homogeneity system $\bar{\mu}= \langle \mu_s : s\in \omega^{<\omega} \rangle$ with the properties:
\begin{itemize}
\item each measure $\mu_s$ is $\gamma$-complete in $V[G]$.
\item $\vec{\mu}\in V[x]$ for some $x\in \mathbb{R}^*_G$.
\item $A = \{z : (\mu_{z\rest n} : n<\omega) \textrm{ is well-founded}\}.$
\end{itemize}
In the last item above, we write $A = S_{\bar{\mu}}.$ If the measures in $\bar{\mu}$ are all $\kappa$-complete for some uncountable cardinal $\kappa$ and $g$ is $<\kappa$-generic, recall we can canonically extend $A$ to $A_g = (S_{\bar{\mu}})_g$, where $$(S_{\bar{\mu}})_g = \{z\in V[G,g] : (\mu_{z\rest n} : n<\omega) \textrm{ is well-founded in } V[G,g]\}.$$
Also, it is clear that for $\delta,G$ as above, there is a $\gamma$ (sufficiently large) such that any $A\in Hom^*_G$ is witnessed by a homogeneity system $\bar{\mu}$ where each measure $\mu\in\bar{\mu}$ is $\gamma$-complete.

\begin{remark}\label{rmk:UB}
Let $\delta,G,A$ be as above. Suppose further that there is a proper class of Woodin cardinals. We can in fact choose $\gamma$ large enough so that letting $g\in V(\mathbb{R}^*_G)$ be $< \delta$-generic such that $\bar{\mu}\in V[g]$ consists of $\gamma$-complete measures, then $(S_{\bar{\mu}})_g$ is in fact universally Baire in $V[g]$ and that $A = (S_{\bar{\mu}})_G$ is universally Baire in $V[G]$. This follows from results of Woodin and Martin-Steel, \cite{DMT}. See item (I) of Section \ref{sec:prelim}.
\end{remark}

\begin{proof}[Proof of Theorem \ref{thm:weakTSimpliesSealing}]  We fix a Woodin cardinal $\delta$ as in the hypothesis of the theorem. We need to verify clause (2) of Definition \ref{dfn:ub_sealing}.  Let $\mathbb{P}\in V_\delta$ and $g\subseteq\mathbb{P}$ be $V$-generic. We show that there is an elementary embedding $j:L(\Gamma^\infty,\mathbb{R})\rightarrow L(\Gamma_g^\infty,\mathbb{R}_g)$. Even though this only proves a special case of clause (2), but it will be evident that the proof can be generalized to prove the full statement. To simplify the notation, we write $L(\Gamma^\infty)$ for $L(\Gamma^\infty,\mathbb{R})$ etc.

Let $G\subseteq \mathbb{Q}_{<\delta}$ be $V$-generic and $j_G$ be the associated embedding.  By Partial Tower Sealing, we have an elementary map $l: L(\Gamma^\infty_G)\rightarrow L(j_G(\Gamma^\infty))$ such that $l\rest \Gamma_G^\infty = \textrm{id}$. We can then define a map $$i: L(\Gamma^\infty) \rightarrow L(\Gamma_G^\infty)$$ as the unique map determined by: $i(A) = A_G$ for each $A\in \Gamma^\infty$ and $i(\alpha)=\beta$ iff $l(\beta) = j_G(\alpha)$ for any indiscernible $\alpha$ of $L(\Gamma^\infty)$. It is easy to see that $i$ can be canonically extended to all of $L(\Gamma^\infty)$ because every $z\in L(\Gamma^\infty)$ has the form $\tau[x,A,s]$ for some term $\tau$, a real $x$, $A\in\Gamma^\infty$, and $s$ a finite set of indiscernibles. Therefore, we can simply define $$i(z) = \tau^{L(\Gamma^\infty_G)}[x,A_G,i(s)].$$

\begin{claim}\label{claim:elem}
$i$ is well-defined and elementary.

\end{claim}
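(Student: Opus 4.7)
The plan is to reduce the claim to verifying the single commutativity relation $l\circ i=j_G\restriction L(\Gamma^\infty)$. Once this is in hand, both well-definedness and elementarity of $i$ follow by elementarity of $j_G$ and $l$ and injectivity of $l$. Since $(\Gamma^\infty)^\sharp$ exists (clause (2) of Partial Tower Sealing), every element of $L(\Gamma^\infty)$ has the form $\tau[r,A,s]$ for a Skolem term $\tau$, a real $r\in\mathbb{R}^V$, a universally Baire set $A\in\Gamma^\infty$, and a finite increasing tuple $s$ of indiscernibles of $L(\Gamma^\infty)$. So it suffices to verify the commutativity on these three types of generators and then apply a standard Skolem-term argument.

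I would verify the three cases as follows. For $r\in\mathbb{R}^V$: $j_G(r)=r$ since $\mathrm{crit}(j_G)=\omega_1^V$, and $l(r)=r$ because $l$ is elementary, hence fixes $\omega$ and each natural number, hence fixes each function $\omega\to\omega$. For an indiscernible $\alpha$ of $L(\Gamma^\infty)$: $j_G(\alpha)$ is an indiscernible of $L(j_G(\Gamma^\infty))$ by elementarity of $j_G$, and the order-preserving surjection condition on $l$ (from indiscernibles to indiscernibles) supplies a unique indiscernible $\beta$ of $L(\Gamma^\infty_G)$ with $l(\beta)=j_G(\alpha)$; this $\beta$ is exactly $i(\alpha)$ by definition. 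For $A\in\Gamma^\infty$: I need $l(A_G)=j_G(A)$. The clause $l\restriction\Gamma^\infty_G=\mathrm{id}$ of Partial Tower Sealing gives $l(A_G)=A_G$, so the commutativity reduces to the identity $A_G=j_G(A)$ for universally Baire $A$ under the generic embedding from the countable stationary tower. This is the standard fact that if $(T,U)$ are trees witnessing $A\in\Gamma^\infty$, then $p[T]^{V[G]}=p[j_G(T)]^M$ under the appropriate closure properties of $M\subseteq V[G]$ (so that $\mathbb{R}^M=\mathbb{R}^{V[G]}$ and the two interpretations coincide).

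With commutativity in hand, well-definedness and elementarity follow. For well-definedness: if $\tau[r,A,s]=\sigma[r',A',s']$ in $L(\Gamma^\infty)$, apply the elementary embedding $j_G$ to obtain $\tau^{L(j_G(\Gamma^\infty))}[r,j_G(A),j_G(s)]=\sigma^{L(j_G(\Gamma^\infty))}[r',j_G(A'),j_G(s')]$; using commutativity, rewrite each image on the outside as $l$ applied to the corresponding $i$-image, and then use elementarity of $l$ together with its injectivity to strip the outer $l$, yielding $\tau^{L(\Gamma^\infty_G)}[r,A_G,i(s)]=\sigma^{L(\Gamma^\infty_G)}[r',A'_G,i(s')]$, which says that $i$ is well-defined. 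The identical argument with an arbitrary formula $\varphi$ in place of equality produces the elementarity statement $L(\Gamma^\infty)\models\varphi[r,A,s]\iff L(\Gamma^\infty_G)\models\varphi[r,A_G,i(s)]$. The main obstacle is the third generator case: interpreting the identity clause of Partial Tower Sealing so as to obtain $l(A_G)=j_G(A)$, which rests on the canonical interpretation behavior of universally Baire sets under stationary-tower generic embeddings and is precisely the content packaged into Definition \ref{def:WTS}(2).
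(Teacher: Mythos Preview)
Your proposal is correct and takes essentially the same approach as the paper: the paper's proof also reduces everything to the two equivalences obtained from elementarity of $j_G$ and of $l$, using exactly the facts $l(i(s))=j_G(s)$ and $l(A_G)=j_G(A)=A_G$ that you isolate. Your framing via the commutativity relation $l\circ i=j_G\restriction L(\Gamma^\infty)$ simply makes explicit what the paper's two-line chain of equivalences encodes, and your additional discussion of why $j_G(A)=A_G$ holds (via the tree representation and closure of $M$) supplies a detail the paper states without justification.
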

\begin{proof}
Suppose $\varphi$ is a formula, $x\in \mathbb{R}$, $A\in\Gamma_\infty$, and $s$ is a finite sequence of indiscernibles.  Then

\begin{align*}
L(\Gamma^\infty)\models \varphi[A,x,s] & \Leftrightarrow L(j_G(\Gamma^\infty)) \models \varphi[j_G(A), x, j_G(s)]\\
             & \Leftrightarrow L(\Gamma_G^\infty)\models \varphi[A_G,x, i(s)].
\end{align*}
The first equivalence follows from elementarity of $j_G$ and the second equivalence follows from the elementarity of $l$ and the fact that $l(i(s)) = j_G(s)$ and $l(A_G) = j_G(A) = A_G$. It is easy to see that the equivalences above prove the claim.
\end{proof}

Let $G'\subseteq \mathbb{Q}_{<\delta}$ be $V[g]$-generic and $j_{G'}: V[g]\rightarrow M'\subseteq V[g][G']$ be the associated embedding. By Partial Tower Sealing, we have an elementary map $l': L(\Gamma^\infty_{g*G'})\rightarrow L(j_{G'}(\Gamma^\infty_g))$ such that $l'\rest \Gamma^\infty_{g*G'} = \textrm{id}$. As before, we can define the elementary map $i': L(\Gamma_g^\infty)\rightarrow L(\Gamma_{g*G'}^\infty)$ similar to how $i$ was defined.

Now, we can find $G,G'$ such that the following hold:
\begin{enumerate}[(i)]
\item $G\subseteq \mathbb{Q}_{<\delta}$ is $V$-generic such that $g\in V[G]$.
\item $G'\subseteq \mathbb{Q}_{<\delta}$ is $V[g]$-generic.
\item $\mathbb{R}^{V[G]} = \mathbb{R}^{V[g][G']}$.
\item $\Gamma_G^\infty = \Gamma_{g*G'}^\infty$.
\end{enumerate}

By standard facts concerning $\mathbb{Q}_{<\delta}$, we can find $G,G'$ satisfying (i)-(iii), see \cite{DMT}. We give a little more details here. By the usual factoring property of Coll$(\omega,<\delta)$ and the fact that $g$ is $<\delta$-generic, there is a $V$-generic $H\subseteq \textrm{Coll}(\omega,<\delta)$ such that $g\in V[H]$. By \cite[Lemma 6.6]{DMT}, there are generics $G\subset \mathbb{Q}_{<\delta}$ and $G'\subset \mathbb{Q}_{<\delta}^{V[g]}$ such that $G$ is $V$-generic, $G'$ is $V[g]$-generic such that $\mathbb{R}^{V[G]} = \mathbb{R}^{V[g][G']} =\mathbb{R}^{V[H]}$. These generics $G,G'$ clearly satisfy (i) - (iii).

We show that (iv) is satisfied as well. In fact, we show 
\begin{equation}\label{eqn:4}
\Gamma_G^\infty = \Gamma_{g*G'}^\infty = Hom^*_{g*G'}=Hom^*_G.
\end{equation}
First, note that $Hom^*_{g*G'}=Hom^*_G$. This is because $\mathbb{R}^*_G = \mathbb{R}^*_{g*G'}$, so any homogeneity system $\bar{\mu}$ witnessing $A\in Hom^*_G$ is in $V[x]$ for some $x\in \mathbb{R}^*_G = \mathbb{R}^*_{g*G'}$; therefore, $\bar{\mu}$ witnesses $A\in Hom^*_{g*G'}$. The converse is proved the same way. 

But then note that by our hypothesis: 
\begin{equation*}
Hom^*_G = \Gamma_G^\infty
\end{equation*}
and
\begin{equation*}
Hom^*_{g*G'} = \Gamma_{g*G'}^\infty.
\end{equation*}
So the equalities in (\ref{eqn:4}) hold.

Let $G,G'$ satisfy (i)-(iv) above. So we have elementary embeddings $i_0: L(\Gamma^\infty)\rightarrow L(\Gamma_G^\infty) = L(\Gamma_{g*G'}^\infty)$ and $i_1: L(\Gamma_g^\infty)\rightarrow L(\Gamma_G^\infty) = L(\Gamma_{g*G'}^\infty)$, where $i_0 = j_G\rest L(\Gamma^\infty)$ and $i_1 = j_{G'}\rest L(\Gamma_g^\infty)$. Let $j: L(\Gamma^\infty)\rightarrow L(\Gamma_g^\infty)$ be defined by: $j(A) = A_g$ for each $A\in \Gamma^\infty$ and $j(\alpha)=\beta$ iff $i_1(\beta)=i_0(\alpha)$. Just like in the proof of Claim \ref{claim:elem}, we have that $k$ is elementary. This completes the proof of the theorem.

\end{proof}

\begin{corollary}\label{cor:hypoholds}
Suppose there is a proper class of Woodin cardinals and $\delta$ is a Woodin limit of Woodin cardinals. Then whenever $g$ is $<$-$\delta$ generic, whenever $G\subseteq \mathbb{Q}_{<\delta}$ is $V[g]$-generic, then $\Gamma_{g*G}^\infty = Hom^*_{g*G}$. Therefore, if Partial Tower Sealing holds at $\delta$, then Sealing holds at $\delta$.
\end{corollary}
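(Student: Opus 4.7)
The plan is to establish the equality $\Gamma^\infty_{g*G} = Hom^*_{g*G}$; once that is in hand, Theorem \ref{thm:weakTSimpliesSealing} immediately delivers the second assertion.

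The inclusion $Hom^*_{g*G}\subseteq\Gamma^\infty_{g*G}$ will follow directly from Remark \ref{rmk:UB}: since $V$ has a proper class of Woodin cardinals and $|g*G|\le\delta$, the model $V[g*G]$ retains a proper class of Woodin cardinals, and every homogeneity system living in $V[x]$ for some $x\in\mathbb{R}^*_{g*G}$ then produces a universally Baire set in $V[g*G]$ via the Martin--Steel/Woodin machinery recalled as item (I) of Section \ref{sec:prelim}.

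For the reverse inclusion $\Gamma^\infty_{g*G}\subseteq Hom^*_{g*G}$, I would fix $A\in\Gamma^\infty_{g*G}$ and an arbitrary ordinal $\gamma$, with the aim of producing a $\gamma$-complete homogeneity system $\bar\mu\in V[x]$, for some $x\in\mathbb{R}^*_{g*G}$, such that $A = S_{\bar\mu}$. First I would use that $\delta$ is a Woodin limit of Woodin cardinals in $V$ to pick a Woodin cardinal $\lambda\in V$ with $\gamma<\lambda<\delta$; since $g$ is $<\delta$-generic and $\mathbb{Q}_{<\delta}$ has size $\delta$, $\lambda$ remains Woodin in $V[g*G]$. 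Next, I would fix absolutely complementing trees $(T,U)\in V[g*G]$ witnessing $A\in\Gamma^\infty_{g*G}$ with respect to all posets of size below some Woodin $\kappa>\delta$ of $V$. Then I would carry out the Martin--Steel construction of a homogeneity system from the extender sequence of $\lambda$ applied to the trees $(T,U)$, yielding $\bar\mu$ with $\gamma$-complete measures and $A = S_{\bar\mu}$.

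The main obstacle will be arranging that the resulting $\bar\mu$ lies in $V[x]$ for a single real $x\in\mathbb{R}^*_{g*G}$, rather than merely in $V[g*G]$. The Woodin-limit-of-Woodins hypothesis on $\delta$ is exactly what makes this possible: Woodin cardinals $\lambda<\delta$ supply extender sequences narrow enough that, once combined with the countable-in-$V[g*G]$ amount of information from $(T,U)$ actually needed to read off $\bar\mu$ on the reals of $V[g*G]$, the data defining $\bar\mu$ is codable by a single real of $\mathbb{R}^*_{g*G}$. Concretely, I would pick $x\in\mathbb{R}^*_{g*G}$ coding a countable substructure capturing the relevant fragment of $(T,U)$ together with the extender construction inside $V$, and verify by absoluteness of the trees $(T,U)$ that the hom system built inside $V[x]$ from the extenders on $\lambda$ still computes $A$ correctly in $V[g*G]$. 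Once $\Gamma^\infty_{g*G} = Hom^*_{g*G}$ is established, Theorem \ref{thm:weakTSimpliesSealing} finishes the proof.
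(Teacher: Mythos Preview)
Your approach has a concrete error and misses the simple idea the paper uses.

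The error: you claim that for a Woodin $\lambda$ with $\gamma<\lambda<\delta$, ``$\lambda$ remains Woodin in $V[g*G]$'' because the forcing has size $\delta$. But Levy--Solovay preserves large cardinals only when the forcing is small \emph{relative to the cardinal}. Here $|\mathbb{Q}_{<\delta}|=\delta>\lambda$; in fact $\mathbb{Q}_{<\delta}$ collapses every $\lambda<\delta$ to be countable (since $\omega_1^{V[g*G]}=\delta$). So there is no Woodin $\lambda<\delta$ available in $V[g*G]$ to run a Martin--Steel construction, and your Step~2 does not get off the ground. Even setting this aside, your proposed fix for the ``main obstacle'' --- building $\bar\mu$ inside $V[x]$ from a countable elementary substructure capturing a fragment of $(T,U)$ --- is not a proof: the homogeneity measures produced by Martin--Steel depend on the full weakly homogeneous system (equivalently, on uncountable pieces of $T,U$), and you give no argument that a countable fragment suffices to define genuine measures that still project to $A$ in $V[g*G]$.

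The paper's argument goes in the opposite direction and is much shorter. For $A\in\Gamma^\infty_{g*G}$, use the proper class of Woodin cardinals \emph{above} $\delta$ to get $A$ $\kappa$-homogeneously Suslin in $V[g*G]$ for some $\kappa>\delta$. Now each measure $\mu_s$ in the witnessing system $\bar\mu$ is $\kappa$-complete with $\kappa$ larger than the size of the forcing, so by the usual small-forcing argument $\mu_s$ lies in $V$. Thus $\bar\mu$ is a \emph{countable} sequence of elements of $V$; since $\omega_1^{V[g*G]}=\delta$, any such sequence lies in $V[x]$ for a single real $x\in\mathbb{R}^*_{g*G}$. That is the whole argument: the ``limit of Woodins below $\delta$'' hypothesis is used only through Theorem~\ref{thm:weakTSimpliesSealing} (for the factoring of generics), not to manufacture the homogeneity system.
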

\begin{proof}
Let $g,G$ be as in the statement of the corollary. As in the proof of Theorem \ref{thm:weakTSimpliesSealing}, we can find $G'\subseteq \mathbb{Q}_{<\delta}$ such that 

\begin{enumerate}[(a)]
\item $G'\subseteq \mathbb{Q}_{<\delta}$ is $V$-generic such that $g\in V[G']$.
\item $G'\subseteq \mathbb{Q}_{<\delta}$ is $V[g]$-generic.
\item $\mathbb{R}^{V[G']} = \mathbb{R}^{V[g][G]}$.
\end{enumerate}
We need to verify that 
\begin{enumerate}
\item[(d)] $\Gamma_{G'}^\infty = \Gamma_{g*G}^\infty$.
\end{enumerate}
As in the proof of (\ref{eqn:4}), $Hom^*_{g*G}=Hom^*_{G'}$. We need to verify:
\begin{equation}\label{eqn:HomequalGamma}
Hom^*_{G'} = \Gamma_{G'}^\infty
\end{equation}
and
\begin{equation}\label{eqn:HomequalGamma2}
Hom^*_{g*G} = \Gamma_{g*G}^\infty.
\end{equation}
We just prove \ref{eqn:HomequalGamma} as the proof of \ref{eqn:HomequalGamma2} is the same. Let $A\in Hom^*_{G'}$. Then there is some $g\in V(\mathbb{R}^*_{G'})$ such that $g$ is $<\delta$-generic and some $\bar{\mu}\in V[g]$ such that $A = (S_{\bar{\mu}})_{G'}$. Let $B=(S_{\bar{\mu}})_g$. We may assume all measures in $\bar{\mu}$ are $\gamma$-complete for a sufficiently large $\gamma$ so that $B$ is in fact universally Baire in $V[g]$ and that $A$ is universally Baire in $V[G']$ (see Remark \ref{rmk:UB}). So $A\in \Gamma_{G'}^\infty$. Conversely, suppose $A\in \Gamma_{G'}^\infty$. By work of Martin-Steel and Woodin \cite{DMT} and the fact that there is a proper class of Woodin cardinals, $A$ is $\kappa$-homogeneously Suslin for some sufficiently large $\kappa > \delta$. Let $\bar{\mu}$  witness $A$ is $\kappa$-homogenously Suslin. Since $\bar{\mu}$ is countable and $\delta = \omega_1^{V[G']}$, $\bar{\mu}\in V[g]$ for some $<\delta$ generic $g\in V(\mathbb{R}^*_{G'})$. Let $B = (S_{\bar{\mu}})_g$ be the $\kappa$-homogeneously Suslin set in $V[g]$ witnessed by $\bar{\mu}$. So $A = (S_{\bar{\mu}})_{G'}$ is the canonical extension of $B$.  This means $A\in Hom^*_{G'}$ as desired.\footnote{We may assume $\kappa$ is large enough that $\bar{\mu}$ witnesses $A\in Hom^*_{G'}.$}



\end{proof}

Assume $\sf{AD}^+$, we say that a pointclass $\Gamma$ such that $\Gamma = \powerset(\mathbb{R})\cap L(\Gamma)$ is $OD$-full if whenever $A\in \Gamma$ and $x,y\in\mathbb{R}$ are such that $y\in OD(A,x)$, then $y\in OD(A,x)$ in $L(\Gamma)$. We write $\Theta$ for the supremum of ordinals $\alpha$ for which there is a surjection of $\mathbb{R}$ onto $\alpha$; we write $(\theta_\alpha : \alpha \leq \gamma)$ for the Solovay sequence. These notations can be relativized to pointclasses like $\Gamma$ and we write $\Theta^\Gamma, \theta_\alpha^\Gamma$ for such objects. For $\Sigma$ an $(\omega_1,\omega_1)$-iteration strategy for a countable mouse or a hod mouse $\P$, for $a\in \textrm{HC}$, we let Lp$^\Sigma(a)$ be the stack of all sound $\Sigma$-mice $\M$ over $a$ such that $\rho_\omega(\M) = \omega$.

\begin{theorem}\label{prop:Gammafull}
Suppose Partial Tower Sealing holds at a Woodin cardinal $\delta$. Let $G\subseteq \mathbb{Q}_{<\delta}$ be $V$-generic and $j_G: V\rightarrow M\subseteq V[G]$ be the associated embedding, then $\Gamma^G_\infty$ is $OD$-full in $j_G(\Gamma_\infty)$. In particular, the following hold.
\begin{enumerate}
\item Suppose $\Sigma\in \Gamma^\infty_G$ is an iteration strategy, then for any $a\in HC^{V[G]}$, Lp$^\Sigma(a)\cap L(\Gamma^\infty_G) = \textrm{Lp}^\Sigma(a)\cap L(j_G(\Gamma^\infty))$.
\item $\Theta^{\Gamma^\infty_G} = \theta_\alpha^{j(\Gamma^\infty)}$ for some limit ordinal $\alpha.$
\end{enumerate}
\end{theorem}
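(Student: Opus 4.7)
The plan is to leverage the elementary embedding $l: L(\Gamma^\infty_G) \to L(j_G(\Gamma^\infty))$ provided by Partial Tower Sealing. First I observe that $l$ fixes every real: each singleton $\{x\}$ is (trivially) universally Baire, so $\{x\} \in \Gamma^\infty_G$, and the identity clause $l\rest\Gamma^\infty_G = \mathrm{id}$ together with elementarity forces $l(x) = x$. Consequently $l$ fixes each $A \in \Gamma^\infty_G$ and each $x, y \in \mathbb{R}^{V[G]}$ simultaneously.

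The main step is establishing $OD$-fullness. I will appeal to the classical observation that in any model $N$ of a sufficient fragment of $\ZF$, the relation ``$w \in OD^N(A, x)$'' is uniformly $\Sigma_2$-definable in $N$ with $A, x$ as parameters: it asserts the existence of an ordinal $\alpha$, a formula $\varphi$, and ordinals $\vec{\beta}$ such that $w$ is the unique $z$ in $L_\alpha^N$ satisfying $\varphi(z, A, x, \vec{\beta})$. Applying this uniform definition inside both $L(\Gamma^\infty_G)$ and $L(j_G(\Gamma^\infty))$, and combining with elementarity of $l$ and the fixity of $A, x, y$, gives the equivalence
\[
y \in OD^{L(\Gamma^\infty_G)}(A, x) \iff y \in OD^{L(j_G(\Gamma^\infty))}(A, x),
\]
for any $A \in \Gamma^\infty_G$ and reals $x, y$. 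This is precisely $OD$-fullness of $\Gamma^\infty_G$ in $j_G(\Gamma^\infty)$. Notably, the argument uses only the elementarity and pointwise-identity clauses of Partial Tower Sealing; surjectivity on indiscernibles is not needed at this step.

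The two applications follow by standard extensions of $OD$-fullness. For (1), a sound $\Sigma$-mouse $\M$ over $a$ projecting to $\omega$ is the unique countable structure satisfying a first-order characterization in terms of $(\Sigma, a)$ and its ordinal height in the stack, so its real code lies in $OD(\Sigma, a)$; since $\Sigma \in \Gamma^\infty_G$ and $a \in \HC^{V[G]}$ is coded by a real, $OD$-fullness directly yields $\Lp^\Sigma(a) \cap L(\Gamma^\infty_G) = \Lp^\Sigma(a) \cap L(j_G(\Gamma^\infty))$. For (2), the same $\Sigma_2$-definability-plus-elementarity argument extends $OD$-fullness to sets of reals: any set of reals in $L(\Gamma^\infty_G)$ lies in $\Gamma^\infty_G$ by Partial Tower Sealing clause (1), hence is fixed by $l$, so the $OD(A)$-prewellorderings of $\mathbb{R}$ in the two inner models coincide. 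Therefore $\theta_A^{L(\Gamma^\infty_G)} = \theta_A^{L(j_G(\Gamma^\infty))}$ for each $A \in \Gamma^\infty_G$, and taking suprema over $A \in \Gamma^\infty_G$ delivers $\Theta^{\Gamma^\infty_G} = \theta_\alpha^{j_G(\Gamma^\infty)}$, where $\alpha$ is the Solovay-index of $\Gamma^\infty_G$ inside $j_G(\Gamma^\infty)$; closure of $\Gamma^\infty_G$ under Wadge reducibility forces $\alpha$ to be a limit.

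The main delicate point will be making precise the $\Sigma_2$-definability of $OD$ in the choice-free setting of $L(X)$ (where one invokes hereditarily-$OD$ hulls in lieu of a global well-order). A secondary subtlety in (2) is to confirm that prewellorderings of $\mathbb{R}$ lying in $L(\Gamma^\infty_G)$ belong to $\Gamma^\infty_G$, which follows by coding $\mathbb{R}^2$ into $\mathbb{R}$ and invoking Partial Tower Sealing clause (1).
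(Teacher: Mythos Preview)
Your approach is essentially correct and in fact streamlines the paper's argument at one point. For $OD$-fullness, you use direct elementarity of $l$ together with $l(A)=A$, $l(x)=x$, $l(y)=y$ to transfer the first-order statement ``$y \in OD(A,x)$'' in both directions; since $l$ is fully elementary this works regardless of the logical complexity of the $OD$ predicate, so your announced ``main delicate point'' about $\Sigma_2$-definability in the choice-free setting is a red herring. The paper, by contrast, handles the harder direction more concretely: it takes the tree $T$ for the universal (boldface) $\Sigma^2_1(A)$ set in $L(j_G(\Gamma^\infty))$, observes that this pointclass is the same in both models so $T \in L(\Gamma^\infty_G)$ with $l(T)=T$, and then concludes $y \in L[T,a] \subseteq OD(A,a)^{L(\Gamma^\infty_G)}$. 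Your route is shorter; the paper's gives an explicit location for $y$.

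For (1), your sketch omits one step the paper makes explicit: once $OD$-fullness places $\M$ in $L(\Gamma^\infty_G)$, you still need that $L(\Gamma^\infty_G)$ \emph{sees} $\M$ as a $\Sigma$-mouse (i.e., contains an iteration strategy). This again follows from elementarity of $l$ and $l(\M)=\M$, $l(\Sigma)=\Sigma$, $l(a)=a$, but it should be said.

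There is one genuine slip in (2): closure of $\Gamma^\infty_G$ under Wadge reducibility does \emph{not} force $\alpha$ to be a limit---Wadge-closed pointclasses of the form $\powerset(\mathbb{R}) \cap L(A,\mathbb{R})$ typically sit at successor levels of the Solovay sequence. The paper's reason is the correct one: by clause (1) of Partial Tower Sealing together with Steel's theorem that every universally Baire set has a universally Baire scale, $L(\Gamma^\infty_G) \models$ ``every set of reals is Suslin'', so there is no largest Suslin cardinal, which forces $\alpha$ to be a limit. You have all the ingredients for this (you invoke clause (1) earlier in the same paragraph); you have simply attributed the conclusion to the wrong one.
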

\begin{proof}
Fix $G,j_G$ and let $l: L(\Gamma^G_\infty)\rightarrow L(j_G(\Gamma^\infty))$ be given by Partial Tower Sealing. Let $a\in HC^{V[G]}$ and let $A\in \Gamma_G^\infty$. Suppose $y\in OD(A,a)\cap HC^{V[G]}$ in $L(\Gamma_G^\infty)$, then by elementarity and the fact that $l(A) = A$ and $l\rest \mathbb{R}^{V[G]} = \textrm{id}$, we have that $y\in OD(A,a)$ in $L(j_G(\Gamma^\infty))$. Conversely, if $y\in OD(A,a)\cap HC^{V[G]}$ in $L(j(\Gamma^\infty))$, then letting $T$ be the tree projecting to the universal $\undertilde{\Sigma^2_1}(A)$-set in $L(j_G(\Gamma^\infty))$; the existence of $T$ follows from the fact that $$L(j_G(\Gamma^\infty)) \models ``\textrm{ there is no largest Suslin cardinal}"$$ which follows from (\ref{eqn:scale}). So $y\in L[T,a]$. Now by our choice of $T$ and the fact that $\undertilde{\Sigma^2_1}(A)$ is the same in $L(j_G(\Gamma^\infty))$ and in $L(\Gamma^\infty_G)$, $T\in L(\Gamma^\infty_G)$. We have then that $l(T) = T$. Since $L(\Gamma_G^\infty)\models y\in L[T,a]$, we see that $y\in OD(A,a)$ in $L(\Gamma_G^\infty)$.

To see (1), note that by elementarity and the fact that $l(\Sigma)=\Sigma$, $$l(\textrm{Lp}^\Sigma(a)^{L(\Gamma_G^\infty)})= (\textrm{Lp}^\Sigma(a))^{L(j_G(\Gamma^\infty))}.$$ Clearly $\textrm{Lp}^\Sigma(a)^{L(\Gamma_G^\infty)}\unlhd \textrm{Lp}^\Sigma(a))^{L(j_G(\Gamma^\infty))}$. Now suppose $\M\lhd (\textrm{Lp}^\Sigma(a))^{L(j_G(\Gamma^\infty))}$ is the least that is not in $(\textrm{Lp}^\Sigma(a))^{L(\Gamma_G^\infty)}$, then since $\M\in OD(\Sigma,a)$ in $L(j_G(\Gamma^\infty))$, $\M\in OD(\Sigma,a)$ in $L(\Gamma_G^\infty)$. Since $L(j_G(\Gamma^\infty))\models \M$ is a $\Sigma$-mouse over $a$, using $l$, we see that $$L(\Gamma^\infty_G)\models \M \textrm{ is a $\Sigma$-mouse over $a$}.$$ This means $\M\lhd (\textrm{Lp}^\Sigma(a))^{L(\Gamma^\infty_G)}$ as desired.

For (2), the fact that $\Theta^{\Gamma_G^\infty} = \theta_\alpha^{j_G(\Gamma^\infty)}$ follows from $OD$-fullness of $\Gamma_G^\infty$; in fact, for each $\beta$ such that $\theta_\beta < \theta_\alpha$ in $j(\Gamma^\infty)$, $\theta_\beta^{\Gamma_G^\infty}=\theta_\beta^{j(\Gamma^\infty)}$. $\alpha$ is limit because $L(\Gamma_G^\infty)\models$ ``every set of reals is Suslin" as mentioned above.
\end{proof}

\section{Derived model representation of $\Gamma^\infty$ and Sealing}
\label{sec:der_model}
In this section, we summarize the construction in \cite{sargsyan2021sealing} that realizes $\Gamma_\infty$ as a derived model via a direct limit construction. We assume the hypothesis of Theorem \ref{thm:weakTS} and write $V$ for the universe of $\P$. We fix a generic $g\subseteq \textrm{Coll}(\omega,\kappa^+)$ and write $\iota$ for $\kappa^+$. We note that by our assumption and standard theory of hod mice \cite{normalization_comparison}, the hypotheses required to apply Theorem 0.4 of \cite{sargsyan2021sealing} are satisfied in $V[g]$.

We say $u=(\eta, \d, \d', \lambda)$ is a \textit{good quadruple} if $(\eta, \d, \lambda)$ and $(\eta, \d', \lambda)$ are good triples with $\d<\d'$ (see Section \ref{sec:prelim}).  Suppose $u=(\eta, \d, \d', \lambda)$ is a good quadruple and $h$ is a $V[g]$-generic such that $g*h$ is generic for a poset in $V_\eta$.  Working in $V[g*h]$, let $D(h, \eta, \d, \lambda)$ be the club of countable 
\begin{center}
$X\prec ((W_\lambda[g*h], u)| \Psi^g_{\eta, \d})$
\end{center}
 such that $H_{\iota}^V\cup\{g\}\subseteq X$. 

 Suppose $A\in \Gamma^\infty_{g*h}$. Then for a club of $X\in D(h, \eta, \d, \lambda)$, $A$ is Suslin, co-Suslin captured by $(M_X, \d_X, \Lambda^{g*h}_X)$ and $A$ is projective in $\Lambda_{X'}$ where $X'  =X\cap W_\lambda$ (see \rlem{ub to strategy}). Given such an $X$, we say $X$ \textit{captures} $A$.

Let $k\subseteq \Coll(\omega, \Gamma^\infty_{g*h})$ be generic, and let $(A_i:i<\omega),(w_i: i<\omega)$ be generic enumerations of $\Gamma^\infty_{g*h}$ and $\mathbb{R}_{g*h}$ respectively in $V[g*h*k]$. Let $(X_i: i<\omega)\in V[g*h*k]$ be such that for each $i$
\begin{enumerate}
\item $X_i \in D(h,\eta,\delta,\lambda)$, and
\item $X_i$ captures $A_i$. 
\end{enumerate}
 In particular, $A_i$ is projective in $\Lambda_{X^{'}_i}$, where $X^{'}_i = X_i\cap W_\lambda$.
We set  $M^0_n=M_{X'_n}$, 
 $\pi^0_n=\pi_{X_0}$, 
$\k_0=\k_{X_0}$,
 $\nu_0=\d_{X_0}$,
 $\nu_0'=\d_{X_0}'$,
$\eta_0=\eta_{X_0}$,
 $\d_0=\d$,
$\P_0=\mathcal{P}$.

Next we inductively define sequences $(M^i_n: i, n<\omega)$, $(\pi^i_n: i, n<\omega)$, $(\Lambda_i: i\leq \omega)$, $(\tau^{i, i+1}_n: i, n<\omega)$, $(\nu_n: i<\omega)$, $(\nu_n': i<\omega)$, $(\eta_n: n<\omega)$, $(\k_i:i<\omega)$, $(\theta_i: i<\omega)$, $(\T_i, E_i: i<\omega)$, $(M_i': i<\omega)$, $(\U_i, F_i: i<\omega)$, $(\P_i: i\leq n)$, $(\P_i': i<\omega)$, and $(\sigma_i: i<\omega)$ satisfying the following conditions (see Figure 5.1 of \cite{sargsyan2021sealing}).
\begin{enumerate}[(a)]
\item For all $i, n<\omega$, $\pi^i_n: M^i_n\rightarrow \P_i$ and $rng(\pi^i_n)\subseteq rng(\pi^i_{n+1})$.
\item $\tau^{i, i+1}_n: M^i_n\rightarrow M^{i+1}_n$. Let $\tau_n: M^0_n\rightarrow M^n_n$ be the composition of $\tau^{j, j+1}_n$'s for $j<n$. 
\item For all $i, n<\omega$, $\k_n=\tau_n(\k_0)$, $\eta_n=\tau_n(\eta_{0})$, $\nu_n=\tau_n(\nu_0)$ and $\nu_n'=\tau_n(\nu_0')$. 
\item For all $n<\omega$, $\T_n$ is an iteration of $M^n_n|\nu_n'$ above $\nu_n$ that makes $w_n$ generic and $M_n'$ is its last model. 
\item $\theta_n=\pi^{\T_n}(\nu_n')$ and $E_n\in \vec{E}^{M_n'}$ is such that $lh(E_n)>\theta_n$ and $\cp(E_n)=\k_n$.
\item for all $m,n$, $M^{n+1}_m=Ult(M^n_m, E_n)$ and $\tau_m^{n, n+1}=\pi_{E_n}^{M^n_m}$.
\item $\U_n=\pi^n_n\T_n$, $\P_n'$ is the last model of $\U_n$, $\sigma_n: M_n'\rightarrow \P_n'$ is the copy map and $F_n=\sigma_n(E_n)$.\footnote{So $\oplus_{i\leq n}\T_i$ and $\oplus_{i\leq n} \U_i$ are sealed iterations based on $\kappa$.}
\item $\P_{n+1}=Ult(\P_n, F_n)$ and $\psi^{n+1}_m:M^{n+1}_m\rightarrow \P_{n+1}$ is given by $\pi^{n+1}_m(\pi_{E_n}^{M^n_m}(f)(a))=\pi_{F_n}^{\P_n}(\pi^n_m(f))(\sigma_n(a))$.
\item $\Lambda_n=(\pi^n_n$-pullback of $(\Psi^{g*h}_\lambda)_{\P_n|\psi_n(\nu_n)})_{\eta_n, \nu_n}=(\sigma_n$-pullback of $(\Psi^{g*h}_\lambda)_{\P_n'|\sigma_n(\nu_n)})_{\eta_n, \nu_n}$ (see \cite[Corollary 3.6]{sargsyan2021sealing}).
\end{enumerate}

Let $M^{\omega}_{n}$ be the direct limit of $(M^m_n: m<\omega)$ under the maps $\tau^{m, m+1}_n$. Letting $\P_\omega$ be the direct limit of $(\P_n: n<\omega)$ and the compositions of $\pi_{F_n}^{\P_n}$, we have natural maps $\pi^\omega_n:M^\omega_n\rightarrow \P_\omega$. Notice that\\\\
(1) for each $n<\omega$, $\k_n<\omega_1^{V[g*h]}$ and $sup_n\k_n=\omega_1^{V[g*h]}$.\\\\
It follows that if $\tau^m_n: M_n^m\rightarrow M^\omega_n$ is the direct limit embedding then\\\\
(2) $\tau^m_n(\k_n)=\omega_1^{V[g*h]}$. \\\\
Next, notice that\\\\
(3) for each $m, n, p$, letting $\iota_n=\tau_n(\iota_{X_0})=\tau_n(\iota)$, $M^n_m|\iota_n=M^n_p|\iota_n$ and $\iota_n=(\k_n^+)^{M^n_m}$.\\
(4) for each $m,n, p$, $\pi^n_m\rest (M^n_m|\iota_n)=\pi^n_p\rest (M^n_p|\iota_n)$\\
(5) for each $m$, $n>1$ and $p>n$, $M^n_m|\theta_{n-1}=M^p_m|\theta_{n-1}$.\\
(6) for each $m$, $n>1$ and $p$ with $p>n$, $\pi^n_m\rest (M^n_m|\theta_{n-1})=\pi^p_m\rest (M^p_m|\theta_{n-1})$.\\\\
Because of condition (d) above we can find $G\subseteq Coll(\omega, <\omega_1^{V[g*h]})$ generic over $M^\omega_n$ (for each $n<\omega$) such that $\mathbb{R}^{M^\omega_n[G]}=\mathbb{R}_{g*h}$ and $G\in V[g*h*k]$. By constructions, $\omega_1^{V[g*h]}$ is a limit of Woodin cardinals in $M^\omega_n$. \cite{sargsyan2021sealing} shows that

\begin{lemma}\label{der model rep} For each $n<\omega$, $DM(G)^{M^\omega_n[G]}=L(\Gamma^\infty_{g*h}, \mathbb{R}_{g*h})$. 
\end{lemma}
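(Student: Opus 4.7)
The plan is to establish the equality by comparing the two sides as models of the form $L(\Gamma,\mathbb{R}_{g*h})$. Since $G$ was chosen so that $\mathbb{R}^{M^\omega_n[G]}=\mathbb{R}_{g*h}$, one has $\mathbb{R}^*_G=\mathbb{R}_{g*h}$ on the derived-model side, so it suffices to prove that the pointclass $(Hom^*_G)^{M^\omega_n[G]}$ equals $\Gamma^\infty_{g*h}$ as subclasses of $\powerset(\mathbb{R}_{g*h})$. This then forces equality of the two $L$-structures.

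For the inclusion $\Gamma^\infty_{g*h}\subseteq (Hom^*_G)^{M^\omega_n[G]}$, I would fix $A\in\Gamma^\infty_{g*h}$; by density in the generic enumeration $(A_i:i<\omega)$, $A=A_i$ for some $i$. By the choice of $X_i$ and by \rlem{ub to strategy}, $A_i$ is Suslin, co-Suslin captured by $(M_{X_i},\delta_{X_i},\Lambda^{g*h}_{X_i})$ and projective in $\Lambda_{X'_i}$. The strategy $\Lambda_{X'_i}$ is a $\pi_{X_i}$-pullback of a fragment of $\Psi$, and via clause (i) of the construction together with \cite[Corollary 3.6]{sargsyan2021sealing}, its iteration images can be read off from $\P_\omega$, hence via $\pi^\omega_n$ they induce, inside $M^\omega_n$ at Woodin cardinals cofinal in $\omega_1^{V[g*h]}$, trees of arbitrarily large absolute-complementation strength whose projections compute $A_i$. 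The point is that, through the $\tau$- and $\pi_{F_n}$-maps, the capture trees $(T_i,S_i)\in M_{X_i}$ are tracked into $M^\omega_n$, where Woodinness converts them into homogeneity systems witnessing $A_i\in Hom^*_G$ in $M^\omega_n[G]$.

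For the reverse inclusion I would invoke Woodin's derived model theorem inside $M^\omega_n$: since $\omega_1^{V[g*h]}$ is a limit of Woodin cardinals in $M^\omega_n$ and $G\subseteq\Coll(\omega,<\omega_1^{V[g*h]})$ is $M^\omega_n$-generic, $DM(G)^{M^\omega_n[G]}\models\adp$, and each $A\in (Hom^*_G)^{M^\omega_n[G]}$ is witnessed, for each $\gamma<\omega_1^{V[g*h]}$, by a $\gamma$-complete homogeneity system $\bar{\mu}\in M^\omega_n[G\cap\Coll(\omega,\alpha)]$ for some $\alpha<\omega_1^{V[g*h]}$. Pushing the associated absolutely complementing trees out of $M^\omega_n$ using the external realization of $M^\omega_n$ as a direct limit of the finite-stage models $M^m_n$ (whose embeddings into $V[g*h]$ are built into the construction via $\pi^n_n$ and the copy maps $\sigma_n$), one obtains trees in $V[g*h]$ of matching absolute-complementation strength. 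Letting $\gamma$ range gives $<\kappa$-absolute-complementation witnesses for arbitrarily large $\kappa$, so $A$ is universally Baire in $V[g*h]$ and hence in $\Gamma^\infty_{g*h}$.

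The main obstacle is the bookkeeping of embeddings and, in particular, the identification of the iteration images of $\Lambda_{X'_i}$ with genuine $Hom^*_G$ witnesses inside $M^\omega_n$. This uses the coherence afforded by items (3)--(6) of the construction (agreement at the $\iota_n$- and $\theta_{n-1}$-levels across stages) together with \cite[Corollary 3.6]{sargsyan2021sealing}. A secondary subtlety is ensuring that the critical points of the $\tau$- and $F$-image maps lie above the relevant measure-concentration points, so that the $\kappa_n$-completeness of the transferred measures is preserved and the transferred homogeneity systems still witness $\gamma$-homogeneity for the $\gamma$ needed to land the resulting set in $\Gamma^\infty_{g*h}$.
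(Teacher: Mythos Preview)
The paper does not give its own proof of this lemma; it defers entirely to \cite{sargsyan2021sealing}. Your outline is essentially the argument there: reduce to the equality $(Hom^*_G)^{M^\omega_n[G]}=\Gamma^\infty_{g*h}$, obtain $\Gamma^\infty_{g*h}\subseteq (Hom^*_G)^{M^\omega_n[G]}$ from the capture data attached to the cofinal sequence $(X_i)$, and obtain the reverse inclusion by realizing trees from $M^\omega_n$ inside iterates of $\P$ via the copy/embedding apparatus.

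One point in your forward inclusion is misstated. You say the capture witnesses are brought into $M^\omega_n$ ``via $\pi^\omega_n$'', but $\pi^\omega_n\colon M^\omega_n\to\P_\omega$ points \emph{out} of $M^\omega_n$, not in. The actual transfer runs through the $\tau$-maps: the capture pair $(T_i,S_i)$ lives in $M_{X_i}$, hence in (a small generic extension of) $M^0_i$, and is carried by $\tau^{0,m}_i$ into $M^m_i$ for each $m$. What lets you land in $M^\omega_n$ for an \emph{arbitrary} fixed $n$ (not just $n=i$) is exactly the agreement recorded in clauses (3)--(6): for each $m$ the models $M^m_n$ and $M^m_i$ coincide below $\iota_m$, and the $\iota_m$ are cofinal in $\omega_1^{V[g*h]}$, so the derived-model computation sees the same witnesses regardless of the subscript. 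You correctly flag this bookkeeping as the main obstacle, but your text conflates the inward $\tau$-maps with the outward $\pi$-maps.

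Your reverse inclusion is correct as stated: a set $A\in (Hom^*_G)^{M^\omega_n[G]}$ is witnessed by complementing trees already present in some $M^m_n[G\cap\Coll(\omega,\alpha)]$; pushing these along $\pi^m_n$ into $\P_m$ (a genuine $\Psi$-iterate of $\P$ living in $V[g*h*k]$) and using the proper class of Woodin cardinals there yields absolutely complementing trees of arbitrary strength in $V[g*h]$, so $A\in\Gamma^\infty_{g*h}$.
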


\rlem{der model rep} implies that clause 1 of Sealing and of Partial Tower Sealing holds. \cite{sargsyan2021sealing} uses \rlem{der model rep} to also verify clause 2 of Sealing holds.

In the next section, we will use the above constructions to verify clause 2 of Partial Tower Sealing holds. We say that the sequence $(X_i: i<\omega)$ is cofinal in $\Gamma^\infty_{g*h}$ as witnessed by $(A_i: i\in \omega)$ and $(w_i: i<\omega)$. We also say that $(M^n_0, \Lambda_n, \theta_n, \tau_{n, m}: n<m<\omega)$ is a $\Gamma^\infty_{g*h}$-genericity iteration induced by $(X_i: i<\omega)$ where $\tau_{n, m}: M^n_0\rightarrow M^m_0$ is the composition of $\tau^{i, i+1}_0$ for $i\in [n, m)$.

\section{Partial Tower Sealing}\label{sec:PartialTS}

In this section, we use the results of the previous section to prove Theorem \ref{thm:weakTS}. We work in the universe of $\P$, which we call $V$. Let $\kappa$ be the least strong cardinal. Let $g\subseteq \textrm{Coll}(\omega,\kappa^+)$ be $V$-generic and let $\delta > \kappa$ be Woodin. We prove Partial Tower Sealing holds in $V[g]$ at $\delta$.

Work in $V[g]$, let $G\subseteq \mathbb{Q}_{<\delta}$ be $V[g]$-generic and let $j_G: V[g] \rightarrow M \subseteq V[g,G]$ be the associated embedding. We want to find an embedding $j: L(\Gamma_{g*G}^\infty)\rightarrow L(j_G(\Gamma_g^\infty))$ such that $j\rest \Gamma_{g*G}^\infty$ is the identity and furthermore, $j$ is an order-preserving bijection on the class of indiscernibles of the models.

We note that the main result of \cite{sargsyan2021sealing} already shows Sealing holds in $V[g]$ at $\delta$, therefore, there is an elementary embedding $i: L(\Gamma_g^\infty)\rightarrow L(\Gamma_{g*G}^\infty)$ such that $i(A) = A_G$ for all $A\in \Gamma_g^\infty$. Furthermore, $(\Gamma_g^\infty)^\sharp, (\Gamma_{g*G}^\infty)^\sharp$ exist and $i$ is the order-preserving bijection on the class of indiscernibles of the models.

$j_G$ induces an elementary embedding $k: L(\Gamma_g^\infty)\rightarrow L(j_G(\Gamma_g^\infty))$ such that $j_G(A) = A_G$ for all $A\in\Gamma_g^\infty$. If $\Gamma_{g*G}^\infty = j_G(\Gamma_g^\infty)$ then we simply let $j$ be the identity. In general, let $W = L(\Gamma_{g*G}^\infty)$, $W' = L(j_G(\Gamma_g^\infty))$, $\tau$ a term, $A\in \Gamma_{g*G}^\infty$, $x\in \mathbb{R}_{g*G}$, and $s$ a finite sequence of indiscernibles for both $W, W'$ such that $j_G(s) = s$ and $i(s) = s$, then we define $$j(\tau^W(A,x,s)) = \tau^{W'}(A,x,s).$$ Since $(\Gamma_{g*G}^\infty)^\sharp, (j_G(\Gamma_g^\infty))^\sharp$ exist, everything in $W$ has the form $\tau^W(A,x,s)$ for some $A,x,s$ (and similarly for $W'$), $j$ is defined on all of $W$. We need to check that $j$ is elementary. 

Let $(\xi_i : i<\omega)$ be the first $\omega$ indiscernibles for both $W,W'$ with the properties described above; we may assume that $s = (\xi_i : i< lh(s))$. Let $u=(\eta, \d', \d'', \lambda)$ be a good quadruple such that $\sup_{i<\omega}\xi_i<\eta$. Let $k\subseteq \Coll(\omega, \Gamma^\infty_{g*G})$ be $V[g*G]$-generic and $k'\subseteq Coll(\omega, j_G(\Gamma^\infty_g))$ be $M$-generic. We may assume $k'\in V[g*G*k]$

We have that $\Gamma^\infty_{g*G}$ is the Wadge closure of strategies of the countable substructures of $V[g]_\lambda$. More precisely, given $A\in \Gamma^\infty_{g*G}$, there is an $X\prec (W_\lambda|\Psi_{\eta, \d'}^{g*G})$ such that $A$ is Wadge reducible to $\Lambda_X$ and $\Lambda_X\in \Gamma_{g*G}^\infty$. It follows that to show that $j$ is elementary it is enough to show that given a formula $\phi$, $m\in \omega$, $u_m$ being the first $m$ common indiscernbiles of $W,W'$ that are fixed points of all relevant embeddings, $X\prec ((V[g]_\lambda, u)|\Psi_{\eta, \d'}^{g*G})$ and a real $x\in \mathbb{R}_{g*G}$,
\begin{center}
$W\models \phi[u_m, \Lambda_X, x]\Rightarrow W'\models \phi[u_m, \Lambda_X, x]$.\footnote{The $\Leftarrow$ is similar as will be evident by the following proof.}
\end{center}
Fix then a tuple $(\phi, n, X, x)$ as above.

Working inside $V[g*G*k]$, let $(Y_i: i<\omega)$ be a cofinal sequence in $\Gamma^\infty_{g*G}$ as witnessed by some $\vec{A}$ and $\vec{w}$ such that $A_0=\emptyset$, $w_0=x$ and $Y'_0=X$. Using $k'$, we also construct $(Z_i: i<\omega)$, a cofinal sequence in $j_G(\Gamma_g^\infty)$ as witnessed by some $\vec{B}$ and $\vec{v}$ such that $B_0=\emptyset$, $v_0=x$ and $Z'_0=X$. Here the $Z_i$'s are elementary substructures of $V^M_{j_G(\lambda)}$.

Let $(M^n_0, \Lambda_n, \theta_n, \tau_{n, l}: n<l<\omega)$ be a $\Gamma^\infty_{g*G}$-genericity iteration induced by $(Y_i: i<\omega)$ and $(N^n_0, \Phi_n, \nu_n, \sigma_{n, l}: n<l<\omega)$ be a $j_G(\Gamma_g^\infty)$-genericity iteration induced by $(Z_i: i<\omega)$. It is not hard to see that we can make sure that $M^1_0=N^1_0$ by simply selecting the same extender $E_0$ after $\T_0$; by our assumptions, $M^0_0 = N^0_0$ and $w_0 = v_0$. Note that this makes sense because if $X\prec V[g]_\lambda$, then $X\prec V^M_{j_G(\lambda)}$ since $V[g]_\lambda\prec V^M_{j_G(\lambda)}$. Furthermore, by elemenarity of $j_G$ and the fact that $\omega_1^{V[G]} > \iota > \kappa$, $M = j_G(V)[g]$ and $j(V) \models ``\kappa $ is strong". See Figure \ref{Two iterations}. 

$L(\Gamma^{g*G}_\infty)$ is realized as the derived model of $M^\omega_0$ and $L(j_G(\Gamma^g_\infty))$ is realized as the derived model of $N^\omega_0$, and the two iterations agree on the first extender used (namely $E_0$) and therefore $M^1_0 = N^1_0$.

\begin{figure}
\centering
\resizebox{0.4\textheight}{!}{
\begin{tikzpicture}[align=center, node distance= 2.5cm, auto]
\node (A) {$M^0_0=N^0_0$};
\node (B) [right of=A] {$M^1_0=N^1_0$};
\node (C) [right of=B]{};
\node (D) [above of=C]{$M^2_0$};
\node (E) [below of=C]{$N^2_0$};
\node (F) [right of=D]{$M^3_0$};
\node (G) [right of=E]{$N^3_0$};
\node (FF) [right of=F]{};
\node (GG) [right of=G]{};
\node (HH) [right of=FF]{};
\node (II) [right of=GG]{};
\node (H) [right of=HH]{$M^\omega_0$};
\node (I) [right of=II]{$N^\omega_0$};
\draw[->] (A) to node {}(B);
\draw[->] (B) to node {}(D);
\draw[->] (B) to node {}(E);
\draw[->] (D) to node {}(F);
\draw[->] (E) to node {}(G);
\draw[->] (F) to node {}(FF);
\draw[->] (G) to node {}(GG);
\draw[->] (HH) to node {}(H);
\draw[->] (II) to node {}(I);
\path (FF)--(HH) node [font=\Huge, midway, sloped]{${}_{\ldots}$};
\path (GG)--(II) node [font=\Huge, midway, sloped]{${}_{\ldots}$};
\end{tikzpicture}
}
\caption{Two genericity iterations: the derived mode of $M^\omega_0$ is $L(\Gamma_{g*G}^\infty)$ and the derived model of $N^\omega_0$ is $L(j_G(\Gamma_g^\infty))$. }
\label{Two iterations}
\end{figure}
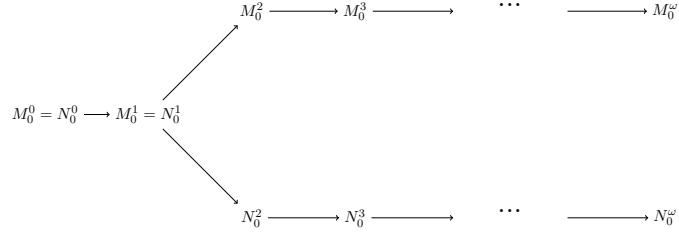

Let $\zeta=\eta_X$ and $\Gamma=(\Psi_{\eta, \d'})_X$. Let $M^\omega_0$ be the direct limit along $(M^n_0: n<\omega)$ and $N^\omega_0$ the direct limit along $(N^n_0: n<\omega)$. For $n<\omega$, let $\k_n$ be the least strong cardinal of $M^n_0$ and $\k_n'$ be the least strong cardinal of $N^n_0$. Let $s_m^n$ be the first $m$ (cardinal) indiscernibles of $L[M^n_0|\k_n]$ and $t_m^n$ be the first $m$ (cardinal) indiscernibles of $L[N^n_0|\k_n']$. Notice that $(M^n_0|\k_n)^\#\in M^n_0$ and $(N^n_0|\k_n')^\#\in N^n_0$. It follows that 
$\tau_{n, l}(s^n_m)=s^l_m$ and $\sigma_{n, l}(t^n_m)=t^l_m$ for $n<l\leq \omega$. We may and do modify the $s^n_m, t^n_m$'s so that $s^\omega_m=u_m$ and $t^\omega_m = u_m$ for each $m$.

We then have the following sequence of implications. Below we let $\Gamma^*$ be the name for the generic extension of $\Gamma$ in the relevant model and $\dot{DM}$ be the name for the derived model. The third implication below uses the fact that $M^1_0 = N^1_0$.
\begin{align*}
W\models \phi[u_m, \Lambda_X, x] & \Rightarrow M^\omega_0[x]\models \emptyset \forces_{Coll(\omega, <\kappa_\omega)}\dot{DM}\models \phi[s^\omega_m, \Gamma^*, x]\\ &\Rightarrow M^1_0[x]=N^1_0[x]\models \emptyset \forces_{Coll(\omega, <\kappa_1)}\dot{DM}\models \phi[s^1_m, \Gamma^*, x] \\ &\Rightarrow N^\omega_0[x]\models \emptyset \forces_{Coll(\omega, <\kappa'_\omega)}\dot{DM}\models \phi[t^\omega_m, \Gamma^*, x] \\ &\Rightarrow W'\models \phi[u_m, \Lambda_X, x].
\end{align*}
The converse has the same proof. We therefore have proved the equivalence and the theorem.

\section{Failure of Tower Sealing}\label{sec:TSfail}

We show in this section that in general, Tower Sealing  fails in hod mice. We recall some terminology from \cite{sargsyan2024exact}. The reader can consult \cite[Definition 2.6]{sargsyan2024exact} for the definition of an \textit{excellent} hybrid premouse $\P$. Let $\P$ be such a premouse and $\P_0\lhd \P$ be the unique lsa type hod premouse that is an initial segment of $\P$ from \cite[Definition 2.6]{sargsyan2024exact}. In particular, there is a Woodin cardinal $\delta_0$ of $\P$ such that $\P_0= (\P|\delta_0)^\sharp\lhd \P$ and $$\P_0\models ``\exists \kappa < \delta_0 \ \kappa \textrm{ is $< \delta_0$-strong and a limit of Woodin cardinals}."$$ Let $\Lambda = S^\P$ be the short-tree strategy predicate for $\P_0$ defined in $\P$, then $\P$ is a $\Lambda$-premouse with a proper class of Woodin cardinals.\footnote{In \cite{sargsyan2024exact}, we demand that $\P$'s class of measurable limit of Woodin cardinals is stationary.} As shown in \cite{sargsyan2024exact}, $\Lambda$ has canonical interpretations in all generic extensions of $\P$; furthermore, let $g\subseteq Coll(\omega,\P_0)$ be $\P$-generic, then $\P[g]\models \textsf{Sealing}$. In fact, Section \ref{sec:PartialTS} shows that $$\P[g]\models \forall \delta \ \textrm{ if } \delta \textrm{ is Woodin, then Partial Tower Sealing holds at } \delta.$$

The next theorem shows that Tower Sealing at any cardinal $\delta$ cannot hold in $\P[g]$.

\begin{theorem}\label{thm:TSfail}
\begin{enumerate}
\item Suppose $(\P,\Sigma)$ is excellent. Let $\P_0,\Lambda$ be the associated lsa type hod premouse derived from $\P$ and $\Lambda$ be the short-tree strategy of $\P_0$ defined in $\P$. Let $g\subseteq Coll(\omega,\P_0)$ be $\P$-generic. Then $$\P[g] \models ``\forall \delta \ \textrm{ if } \delta \textrm{ is Woodin, then Tower Sealing fails at } \delta."$$
\item Suppose $(\P,\Psi)$ is the minimal lbr hod pair such that $\P \models $``there is a strong cardinal and a proper class of Woodin cardinals". Let $\kappa$ be the least strong cardinal of $\P$ and $g\subset Coll(\omega,\kappa^+)$ be $\P$-generic. Then $$\P[g] \models ``\forall \delta \ \textrm{ if } \delta \textrm{ is Woodin, then Tower Sealing fails at } \delta."$$
\end{enumerate}
\end{theorem}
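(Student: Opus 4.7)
The plan is to argue by contradiction under the assumption that Tower Sealing holds at some Woodin cardinal $\delta$ in $\P[g]$. Let $G\subseteq\mathbb{Q}_{<\delta}$ be $\P[g]$-generic with associated generic embedding $j_G:\P[g]\to M\subseteq\P[g*G]$, so that $\Gamma^\infty_{g*G}=j_G(\Gamma^\infty_g)$ as classes of $\P[g*G]$. The critical point of $j_G$ is $\omega_1^{\P[g]}$; in case (2), since $g$ collapses $\kappa^+$ to $\omega$, any Woodin $\delta$ of $\P$ above the collapse satisfies $\delta>\omega_1^{\P[g]}$, so $j_G(\delta)>\delta$, and Woodin cardinals of $\P$ lying in $[\omega_1^{\P[g]},\delta)$ are genuinely moved. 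An analogous statement holds in case (1), where $\P_0$ and the short-tree strategy $\Lambda$ play the roles of $\P|\kappa^+$ and $\Psi$, and $g$ collapses $|\P_0|^\P$.

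Since Partial Tower Sealing holds at $\delta$ by Theorem \ref{thm:weakTS}, Theorem \ref{prop:Gammafull}(2) gives $\Theta^{\Gamma^\infty_{g*G}}=\theta_\alpha^{j_G(\Gamma^\infty_g)}$ for some limit ordinal $\alpha$. The hypothetical equality $\Gamma^\infty_{g*G}=j_G(\Gamma^\infty_g)$ forces $\alpha$ to equal the full length $\gamma'$ of the Solovay sequence of $L(j_G(\Gamma^\infty_g))$; equivalently, no element of $j_G(\Gamma^\infty_g)$ has Wadge rank strictly above $\Theta^{\Gamma^\infty_{g*G}}$. The plan is to contradict this by exhibiting a universally Baire set $A\in j_G(\Gamma^\infty_g)$ whose Wadge rank (in $L(j_G(\Gamma^\infty_g))$) strictly exceeds $\Theta^{\Gamma^\infty_{g*G}}$.

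The natural candidate $A$ is a code for the canonical iteration strategy of $j_G(\P)$ restricted to a rank sensitive to the new Woodin cardinals $j_G$ produces in the interval $[\delta,j_G(\delta))$; this is an object of $M$, hence of $j_G(\Gamma^\infty_g)$. By the derived model representation of Section \ref{sec:der_model} together with the HOD analysis of hod pairs in \cite{sargsyan2021sealing} (and \cite{sargsyan2024exact} for case (1)), $\Gamma^\infty_{g*G}$ is generated, up to Wadge equivalence, by strategies of countable substructures of the original $\P$, whose Woodin sequence sits pointwise below the $j_G$-shifted sequence in the relevant range; consequently the Wadge rank of $A$ exceeds that of any such strategy. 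The main obstacle is making this last claim precise: one must identify the canonical HOD mouse of $L(\Gamma^\infty_{g*G})$ via the Solovay sequence analysis and show that it is bounded below $A$ in the Wadge hierarchy of $L(j_G(\Gamma^\infty_g))$. Once this is established, the strict Wadge inequality $\Theta^{\Gamma^\infty_{g*G}}<$ Wadge rank of $A$ contradicts the consequence of Tower Sealing drawn from Theorem \ref{prop:Gammafull}(2), yielding the result uniformly in both cases.
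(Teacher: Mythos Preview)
Your high-level strategy---assume $\Gamma^\infty_{g*G}=j_G(\Gamma^\infty_g)$ and exhibit a set in $j_G(\Gamma^\infty_g)$ that cannot lie in $\Gamma^\infty_{g*G}$---matches the paper's, but the proposal stops precisely where the real work begins.

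First, the detour through Theorem~\ref{prop:Gammafull}(2) is idle: once you assume $\Gamma^\infty_{g*G}=j_G(\Gamma^\infty_g)$, the equality $\Theta^{\Gamma^\infty_{g*G}}=\Theta^{j_G(\Gamma^\infty_g)}$ is immediate, and ``$A\in j_G(\Gamma^\infty_g)$ has Wadge rank above $\Theta^{\Gamma^\infty_{g*G}}$'' is just a restatement of $A\notin\Gamma^\infty_{g*G}$. The Solovay-sequence framing adds nothing.

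Second, your candidate $A$---the strategy of $j_G(\P)$ ``restricted to a rank sensitive to the new Woodin cardinals in $[\delta,j_G(\delta))$''---is not the right object, and the new Woodins play no role. In both cases the paper's witness is the strategy of the \emph{collapsing level} $\M\lhd j_G(\P)$ for $\omega_1^{\P[g]}$: the least initial segment of $j_G(\P)$ projecting below $\omega_1^{\P[g]}$. The point is that $j_G(\P)$ sees $\omega_1^{\P[g]}$ as countable while $\P$ does not.

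Third, and decisively, what you label ``the main obstacle'' is the entire content of the proof, and you have not touched it. The two parts are not handled uniformly. For part~(1) the paper shows that Tower Sealing would force $\mathrm{Lp}^{\Lambda_G,\Gamma}(\P_0)=\mathrm{Lp}^{j_G(\Lambda),\Gamma}(\P_0)$, so the strategy $\Sigma_\M$ of $\M$ lies in $\Gamma^\infty_{g*G}$; a homogeneity argument then forces $\M\in\P[g]$, contradicting that $\M$ collapses $\omega_1^{\P[g]}$. For part~(2) the argument is substantially harder: one must show the direct-limit systems computed via $\Psi^{g*G}$ in $V[g*G]$ and via $j_G(\Psi^g)$ in $M$ agree up to the successor of $\Theta$ (Lemma~\ref{lem:plusequal}), use the \emph{minimality} of $\P$ to match iteration maps below $\iota$ (Lemma~\ref{lem:init_equal}), and then track the collapsing level along the main branches of the comparison trees extender by extender (Lemma~\ref{lem:mainbranch}) to produce a cardinality contradiction in the direct limits. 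None of this machinery is present in, or recoverable from, your sketch.
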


To prove part (1), let $\delta$ be a Woodin cardinal of $\P > \delta_0$, equivalently, $\delta$ is a Woodin cardinal in $\P[g]$. Working in $\P[g]$, let $G\subseteq \mathbb{Q}_{<\delta}$ be $\P[g]$-generic and $j_G:\P[g]\rightarrow M\subseteq \P[g][G]$ be the associated embedding. We show $j_G(\Gamma^g_\infty)\neq \Gamma^{g*G}_\infty.$ Suppose not. Letting $\Gamma = \Gamma^\infty_{g*G} = j_G(\Gamma^\infty_g)$ and $\Lambda_G$ be the canonical interpretation of $\Lambda$ in $\P[g]$, then $$\textrm{Lp}^{\Lambda_G,\Gamma}(\P_0) = \textrm{Lp}^{j_G(\Lambda),\Gamma}(\P_0).$$ $\textrm{Lp}^{\Lambda_G,\Gamma}(\P_0) \unlhd \textrm{Lp}^{j_G(\Lambda),\Gamma}(\P_0)$ since $\Lambda_G\subseteq j_G(\Lambda)$. The other direction holds by our assumption that $\Gamma^\infty_{g*G} = j_G(\Gamma^\infty_g)$; if $\M\lhd \textrm{Lp}^{j_G(\Lambda),\Gamma}(\P_0)$, then letting $\Sigma_\M$ be the unique iteration strategy for $\M$, then $\Sigma_\M\in \Gamma = \Gamma(\P_0,\Lambda_G)$\footnote{The equality $\Gamma = \Gamma(\P_0,\Lambda_G)$ follows from \cite{sargsyan2024exact}, here $\Gamma(\P_0,\Lambda_G)$ is the pointclass generated by $\P_0,\Lambda_G$. $\Gamma(\P_0,\Lambda_G)$ consists of $A\subseteq\mathbb{R}$ such that there is an embedding $i: P_0\rightarrow \Q$ according to $\Lambda_G$ such that $A<_w \Psi$ where $\Psi = (\Lambda_G)_{\Q|i(\kappa)}$.}, so $\M\lhd \textrm{Lp}^{\Lambda_G,\Gamma}(\P_0)$.

Now, let $\M\lhd j_G(\P)$ be the least such that $\rho_\omega(\M)<\omega_1^{\P[g]}$ and $\Sigma_\M$ be the canonical strategy of $\M$ as a $j_G(\Lambda)$-mouse in $M$. Then since $\Sigma_\M$ is a total strategy, it must be in $\Gamma$. This means $\M\lhd \textrm{Lp}^{j_G(\Lambda),\Gamma}(\P_0)$.  $\Sigma_\M$ is universally Baire in $\P[g][G]$, so in $\P[g][G][h]$ where $h$ is $\Coll(\omega,\delta)$-generic over $\P[g][G]$, $\M\lhd \textrm{Lp}^{\Lambda_{G*h}}(\P_0)$ and hence $\M\in \P[g]$ by homogeneity. However, $\M$ defines a surjection from some $\alpha < \omega_1^{\P[g]}$ onto $\omega_1^{\P[g]}$, $\M$ cannot be in $\P[g]$. This is a contradiction. Therefore,  $\Gamma^\infty_{g*G} \neq j_G(\Gamma^\infty_g)$ as claimed.

Now we prove part (2). Let $\kappa,g$ be as in the statement of the theorem. Fix a Woodin cardinal $\delta > \kappa$. Let $G\subseteq\mathbb{Q}_{<\delta}$ be $V[g]$-generic. Let $j_G: V[g]\rightarrow M\subseteq V[g][G]$ be the associated generic embedding. We will show that 
\begin{equation}\label{eqn:TSfails}
j_G(\Gamma^g_\infty)\neq \Gamma^{g*G}_\infty.
\end{equation} 


Suppose (\ref{eqn:TSfails}) fails. We write $V$ for the universe of $\P$. Recall that $\iota = \kappa^{+}$ and hence $\iota^+ = \omega_1^{V[g]}$. In $V[g][G]$, let $\M_\infty^G$ be the direct limit of all countable iterates of $\P|\delta$ via $\Psi^{g*G}$ and let $i_G: \P|\delta \rightarrow \M_\infty^G$ be the direct limit embedding. We let $\M_\infty^M$ be the direct limit of all countable iterates of $j_G(\P)|\delta$ via $j_G(\Psi^{g})$ and $i_M: j_G(\P)|\delta\rightarrow \M^M_\infty$ be the direct limit embedding. By our assumption, letting $\Gamma = j_G(\Gamma^g_\infty) =\Gamma^{g*G}_\infty$ and $\Theta = \Theta^\Gamma$, then by the general properties of the direct limit construction, $$i_G(\kappa) = i_M(\kappa) = \Theta$$ and $$\M^G_\infty|\Theta = \M^M_\infty|\Theta = \textrm{HOD}^{L(\Gamma)}|\Theta.$$

In the following, we will write $j$ for $j_G$. Let $(\M,\Phi) \lhd (j(P)|\delta, j(\Psi^g))$ be such that 
\begin{itemize}
\item $(\iota^+)^V < o(\M)$;
\item $\rho_\omega(\M) < (\iota^+)^V$ and $\M$ is the minimal such level of $M$ with this property;
\item $\Phi$ is the canonical strategy of $\M$.
\end{itemize}
Let $\tau = (\iota^+)^V$. We note that $\M|\tau = \P|\tau$ and $\tau$ is a cardinal of $\M$. 

\begin{lemma}\label{lem:plusequal}
$\M^G_\infty|(\Theta)^{+,\M^G_\infty} = \M^M_\infty|(\Theta)^{+,\M^M_\infty}.$
\end{lemma}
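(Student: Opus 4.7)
The plan is to identify both sides of the claimed equation with a common $\Gamma$-closure of
$\Q_0 := \M^G_\infty|\Theta = \M^M_\infty|\Theta = \H^{L(\Gamma)}|\Theta$, where $\Gamma = \Gamma^\infty_{g*G} = j_G(\Gamma^\infty_g)$. Write $\Lp^\Gamma(\Q_0)$ for the stack of all sound hybrid/strategy premice $\N$ extending $\Q_0$ with $\rho_\omega(\N)\leq\Theta$ whose canonical $(\omega_1,\omega_1)$-iteration strategy above $\Theta$ lies in $\Gamma$. I will argue
\[
\M^G_\infty|(\Theta)^{+,\M^G_\infty} \;=\; \Lp^\Gamma(\Q_0) \;=\; \M^M_\infty|(\Theta)^{+,\M^M_\infty},
\]
which yields the lemma. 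By symmetry it suffices to prove the first equality; the second is obtained by the verbatim argument in $M$ with $j_G(\Psi^g)$ and $j_G(\Gamma^\infty_g)$ in place of $\Psi^{g*G}$ and $\Gamma^\infty_{g*G}$.

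For the inclusion $\M^G_\infty|(\Theta)^{+,\M^G_\infty}\unlhd \Lp^\Gamma(\Q_0)$, I would take any proper initial segment $\N\lhd \M^G_\infty$ with $o(\N)>\Theta$ and $\rho_\omega(\N)\leq\Theta$ and trace it back through the direct limit system of iterates of $\P|\delta$: there is a countable hull $X\prec V[g*G]_\lambda$ whose transitive collapse has an iterate $\Q_X$ realizing $\N$ as an initial segment. Lemma \ref{ub to strategy} then gives that the relevant tail strategy $\Lambda_X$ lies in $\Gamma^\infty_{g*G}=\Gamma$, and the canonical strategy of $\N$ above $\Theta$ is definable from (indeed projective in) $\Lambda_X$, hence in $\Gamma$. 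For the reverse inclusion, I would use universality of the direct limit inside $L(\Gamma)$: any sound $\Gamma$-mouse $\N$ over $\Q_0$ projecting to $\Theta$ coiterates with $\M^G_\infty$; since $\Q_0 = \H^{L(\Gamma)}|\Theta$ and $\N$ is sound over $\Q_0$, the comparison is trivial on the $\N$-side, so $\N$ lines up as an initial segment of $\M^G_\infty$ below $(\Theta)^{+,\M^G_\infty}$.

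The main obstacle is that the levels of $\M^G_\infty$ above $\Theta$ are hybrid/strategy mice, so their canonical iteration strategies carry an extra predicate tracking branches of $\Psi^{g*G}$ (respectively $j_G(\Psi^g)$ on the $M$-side). To equate ``level of the direct limit'' with ``$\Gamma$-mouse over $\Q_0$'' one must verify that this full hybrid strategy, not merely its ordinary mouse part, lies in $\Gamma$ uniformly and is respected by the coiteration used in the second inclusion. This should be delivered by the branch condensation and strategy-coherence properties of $\Psi$ inherited from the lbr or layered hod pair $(\P,\Psi)$, together with the identification $\Gamma = \Gamma^\infty_{g*G} = j_G(\Gamma^\infty_g)$, which guarantees that the two direct limit constructions are computed against the same pointclass of strategies.
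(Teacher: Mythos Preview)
Your underlying idea is the right one: both $\M^G_\infty$ and $\M^M_\infty$ should compute $(\Theta)^+$ as the height of the $\Gamma$-full stack over $\Q_0$, and this is exactly what fullness preservation of the relevant strategies expresses. But the way you implement it has a real gap.

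The object $\Q_0 = \M^G_\infty|\Theta$ is not countable in $V[g][G]$, and neither are the levels $\N\lhd\M^G_\infty$ with $\Theta<o(\N)<(\Theta)^{+,\M^G_\infty}$. So it does not make sense to speak of the ``canonical $(\omega_1,\omega_1)$-iteration strategy of $\N$ above $\Theta$'' as a set of reals in $\Gamma$, and your sentence ``there is a countable hull $X$ whose transitive collapse has an iterate $\Q_X$ realizing $\N$ as an initial segment'' cannot be literally true: a countable iterate cannot have the uncountable $\N$ as an initial segment. Your $\Lp^\Gamma(\Q_0)$ is therefore not well-defined as stated, and the tracing-back argument does not parse. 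The universality step for the reverse inclusion is likewise too vague: you would need a comparison of a large (uncountable) $\Gamma$-mouse against $\M^G_\infty$, and it is not clear in what sense or where this comparison is carried out.

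The paper resolves exactly this size problem by reflecting the whole picture down to a countable substructure first. One takes a countable $X\prec H^{V[g][G]}_\gamma$ with $\omega_1^{V[g]}<X\cap\delta\in\delta$ and collapses. Under the collapse, $\P|\delta$ and $j(\P)|\delta$ become $\P|\delta_X$ and $j(\P)|\delta_X$ (condensation is used here to see that the collapsed strategies are the actual tails $\Psi^{g*G}_{\P|\delta_X}$ and $j(\Psi^g)_{j(\P)|\delta_X}$), and $\M^G_\infty,\M^M_\infty$ collapse to countable iterates $\Q_\infty,\R_\infty$. Now the strategies are genuinely fullness preserving in $L(\Gamma)$, and since $\Q_\infty|\Theta_X=\R_\infty|\Theta_X$, fullness preservation immediately gives $\Q_\infty|(\Theta_X)^{+,\Q_\infty}=\R_\infty|(\Theta_X)^{+,\R_\infty}$. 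The desired equality then follows by elementarity of the uncollapse map. In other words, the paper carries out your $\Lp^\Gamma$-identification, but only after arranging that all the objects involved are countable so that the statement is meaningful and fullness preservation applies directly; the passage back to the original uncountable direct limits is purely by elementarity.
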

\begin{proof}
Work in $V[g][G]$, let $\gamma > > \omega_1$ be a regular cardinal and $X\prec H_\gamma^{V[g][G]}$ be countable such that $\omega_1^{V[g]} < X\cap \delta \in \delta$. Let $\pi_X: M_X\rightarrow X$ be the uncollapse map and $\delta_X = \textrm{crt}(\pi_X)$. Let $(\Q,\R) = \pi_X^{-1}(\P|\delta, j(\P)|\delta)$. Note that $\Q,\R\models ``\kappa$ is strong" and $$\Q|\iota = \R|\iota = \P|\iota = j(\P)|\iota.$$ 
Furthermore, let $G_X = \pi_X^{-1}(G)$, then $\Q = \P|\delta_X$ and $\R = j_{G_X}(\P)|\delta_X$ where $j_{G_X}$ is the generic ultrapower map induced by $G_X$. Since $X$ is transitive below $\delta$, we in fact get (by condensation, cf. \cite[Theorem 4.6]{steel2022condensation}) that $(\Q,\Psi_X) = (\P|\delta_X, \Psi^{g*G}_\Q)$ and $(\R,\Lambda_X) = (j(\P)|\delta_X, j(\Psi^g)_\R)$, where $\Psi_X$ is the $\pi_X^{-1}$-pullback of $\Psi^{g*G}$ and $\Lambda_X$ is the $\pi_X^{-1}$-pullback of $j_G(\Psi^g)$. Note also that $\delta_X >\kappa$ is an inaccessible cardinal in $\P$ and $j(\P)$.

Let $\Theta_X=\pi^{-1}_X(\Theta)$ and $(\Q_\infty,\R_\infty) = \pi_X^{-1}(\M^G_\infty, \M^M_\infty)$. By elementarity, $\Q_\infty|\Theta_X = \R_\infty|\Theta_X$, $\Q_\infty$ is a $\Psi_X$-iterate of $\Q$, and $\R_\infty$ is a $\Lambda_X$-iterate of $\R$. Note that $\Psi_X,\Lambda_X$ are fullness preserving in $L(\Gamma)$. $\Q_\infty|\Theta_X = \R_\infty|\Theta_X$, and by fullness preservation of $\Psi_X,\Lambda_X$\footnote{This is because $\delta_X > \kappa$ is an inaccessible cardinal of $\P$ and $j(\P)$. By condensation, $\Psi_X = \Psi^{g*G}_\Q$, and since $\Psi^{g*G}$ is fullness preserving, $\Psi_X$ is as well. A similar argument applies to $\Lambda_X$.} that $$\Q_\infty|(\Theta_X)^{+,\Q_\infty} = \R_\infty|(\Theta_X)^{+,\R_\infty}.$$ This gives the lemma by elementarity.
\end{proof}

Let $\M_\infty = \M^G_\infty|(\Theta)^{+,\M^G_\infty} = \M^M_\infty|(\Theta)^{+,\M^M_\infty}$. Now let $(\Q,\Sigma)\lhd (\Q^+,\Sigma^+)$ where $(\Q^+,\Sigma^+)\in I(j(\P),j_G(\Psi^g))$\footnote{For a hod pair $(\S,\Upsilon)$, the set $I(\S,\Upsilon)$ denotes the collection of non-dropping iterates $(\S',\Upsilon')$ of $(\S,\Upsilon)$.} and $(\R,\Lambda)\lhd (\R^+,\Lambda^+)\in I(\P,\Psi^{g*G})$ with the following properties:
\begin{enumerate}[(i)]
\item $\M_\infty(\Q,\Sigma)\lhd \M_\infty$ and $\M_\infty(\R,\Lambda)\lhd \M_\infty$. 
\item $\M_\infty(\Q,\Sigma)\unlhd \M_\infty(\R,\Lambda)$.
\item $\pi^{\Sigma^+}_{\Q^+,\infty}\rest \iota_\Q = \pi^{\Lambda^+}_{\R^+,\infty}\rest \iota_\R$, where $\iota_\Q = \pi^{j_G(\Psi^g)}_{j(\P)|\delta,\Q^+}(\iota)$ and $\iota_\R = \pi^{\Psi^{g*G}}_{\P|\delta,\R^+}(\iota).$
\item $o(\R)$ is a cardinal of $\R^+$ and therefore is a limit of indices of extenders on the sequence of $\R$ with critical point $\kappa_\R$, where $\kappa_\R = \pi^{\Psi^{g*G}}_{\P|\delta,\R^+}(\kappa),$ and $o(\M_\infty(\R,\Lambda))$ is a limit cardinal of $\M_\infty.$
\item $o(\Q)$ is a cardinal of $\Q^+$ and therefore is a limit of indices of extenders on the sequence of $\R$ with critical point $\kappa_\Q$, where $\kappa_\Q = \pi^{j_G(\Psi^g)}_{j(\P)|\delta,\Q^+}(\kappa),$ and $o(\M_\infty(\Q,\Sigma))$ is a limit cardinal of $\M_\infty.$
\end{enumerate}

We note that such pairs can easily be constructed by the general properties of the direct limit systems. Item (ii) follows from the assumption that \ref{eqn:TSfails} fails. Item (iii) follows from the proof of Lemma \ref{lem:plusequal}; indeed, using the notation as there, we can let $\R^+$ be Ult$(\P|\delta,E)$ where $E$ is the (long) extender of length $\Theta_X$ derived from the iteration map $\Pi^{\Psi_X}_{\P|\delta_X,\infty}$ computed in $M_X[G_X]$ and $\Q^+ = \textrm{Ult}(j(\P)|\delta_X,F)$ where $F$ is the (long) extender of length $\Theta_X$ derived from the iteration map $\Pi^{\Lambda_X}_{j_{G_X}(\P)|\delta_X|\delta_X,\infty}$ computed in $M_X[G_X]$, then $$\pi^{\Sigma^+}_{\Q^+,\infty}\rest \iota_\Q = \pi^{\Lambda^+}_{\R^+,\infty}\rest \iota_\R = \pi_X\rest (\Theta_X)^{+,\Q_\infty}.$$ In the following, whenever $(Q,\Psi')\in I(\P|\delta, \Psi^{g*G})$, we write $\kappa_\Q, \iota_\Q$ etc. for the images of $\kappa, \iota$ etc. under the iteration embedding.

Let $\M_Q = \textrm{Ult}_0(\M,K)$ where $K$ is the extender derived from $\pi_{j(\P)|\delta,\Q^+}^{j_G(\Psi^g)}\rest \iota$. Then we have, by standard fine-structural computations, that 
\begin{itemize}
    \item[(vi)] $\rho_1(\M_\Q) < \tau^*_\Q$ where $\tau^*_\Q = \pi_K^M(\tau) = \textrm{sup} \pi_K^M\rest \tau.$
    \item[(vii)] $\M_\Q\lhd \Q^+$. 
\end{itemize}

\begin{lemma}\label{lem:init_equal}
$\pi_{\P|\delta,\R^+}^{\Psi^{g*G}}\rest \iota = \pi_{j(\P)|\delta,\Q^+}^{j_G(\Psi^g)}\rest \iota$ and  $\tau^*_\Q = \pi_{\P|\delta,\R^+}^{\Psi^{g*G}}(\tau).$   

\end{lemma}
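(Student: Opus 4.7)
The plan is to deduce both equalities from condition (iii), the description of $\R^+, \Q^+$ as long-extender ultrapowers constructed in the proof of \rlem{lem:plusequal}, and the fact that $\P|\iota = j(\P)|\iota$, which holds because $\iota$ is countable in $V[g]$ while $\textrm{crt}(j_G) = \omega_1^{V[g]} > \iota$.

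For the first equality I would argue in three steps. (a) Under the reductio assumption $\Gamma^{g*G}_\infty = j_G(\Gamma^g_\infty) = \Gamma$, both $\Psi^{g*G}$ and $j_G(\Psi^g)$ have their $\HC$-restrictions in $\Gamma$; by uniqueness of hod pair strategies in the $OD$-full pointclass $\Gamma$ (cf.\ \rthm{prop:Gammafull}), the restrictions to the common initial segment $\P|\iota = j(\P)|\iota$ coincide. (b) Hence the direct-limit embeddings $\pi^{\Psi^{g*G}}_{\P|\delta,\infty}$ and $\pi^{j_G(\Psi^g)}_{j(\P)|\delta,\infty}$ agree on $\iota$, both sending it to $(\Theta)^{+,\M_\infty}$. (c) The hypothesis (iii) in the setup provides $\pi^{\Sigma^+}_{\Q^+,\infty}\rest \iota_\Q = \pi^{\Lambda^+}_{\R^+,\infty}\rest \iota_\R$ as functions, which forces $\iota_\Q = \iota_\R$ and makes the common restriction injective. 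For $x < \iota$ one then has $\pi^{\Sigma^+}_{\Q^+,\infty}(\pi^{j_G(\Psi^g)}_{j(\P)|\delta,\Q^+}(x)) = \pi^{j_G(\Psi^g)}_{j(\P)|\delta,\infty}(x) = \pi^{\Psi^{g*G}}_{\P|\delta,\infty}(x) = \pi^{\Lambda^+}_{\R^+,\infty}(\pi^{\Psi^{g*G}}_{\P|\delta,\R^+}(x))$, and injectivity yields $\pi^{j_G(\Psi^g)}_{j(\P)|\delta,\Q^+}\rest \iota = \pi^{\Psi^{g*G}}_{\P|\delta,\R^+}\rest \iota$.

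For the second equality, item (vi) already records $\tau^*_\Q = \pi_K^M(\tau) = \sup \pi_K^M \rest \tau$. Since $K$ is derived from $\pi^{j_G(\Psi^g)}_{j(\P)|\delta,\Q^+}\rest \iota$, the first equality gives $\pi_K^M\rest \iota = \pi^{\Psi^{g*G}}_{\P|\delta,\R^+}\rest \iota$. For $\alpha \in [\iota,\tau)$, we have $|\alpha|^\P \leq \iota$, so $\alpha$ is coded by a subset of $\iota$ in $\P$ and therefore both $\pi_K^M(\alpha)$ and $\pi^{\Psi^{g*G}}_{\P|\delta,\R^+}(\alpha)$ are determined by the common $\iota$-restriction; hence they agree, giving $\sup \pi^{\Psi^{g*G}}_{\P|\delta,\R^+}\rest \tau = \sup\pi_K^M\rest\tau$. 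Continuity of $\pi^{\Psi^{g*G}}_{\P|\delta,\R^+}$ at the regular cardinal $\tau = (\iota^+)^\P$ (the extender defining $\R^+$ has critical point $\kappa < \iota$) then yields $\pi^{\Psi^{g*G}}_{\P|\delta,\R^+}(\tau) = \sup\pi^{\Psi^{g*G}}_{\P|\delta,\R^+}\rest \tau = \tau^*_\Q$.

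The main obstacle is step (a) of the first argument: verifying that $\Psi^{g*G}\rest (\P|\iota) = j_G(\Psi^g)\rest (j(\P)|\iota)$. This rests on uniqueness of hod pair strategies in the $OD$-full pointclass $\Gamma$, which in our setup uses \rthm{prop:Gammafull} and the Suslin-closure of $\Gamma$; one must also verify that this uniqueness survives the fact that the two strategies are originally defined in different universes ($V[g*G]$ and $M\subseteq V[g*G]$) before passage to $L(\Gamma)$. Once this agreement is in place, the rest of the proof is a routine factorization-and-continuity manipulation.
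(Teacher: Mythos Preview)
Your approach is genuinely different from the paper's, and it is worth spelling out the contrast.

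The paper argues directly for the first clause by exploiting the \emph{minimality} of $(\P,\Psi)$ (the explicit hypothesis of part (2) of \rthm{thm:TSfail}). It first observes that $\Psi^{g*G}_{\P|\kappa} = j(\Psi^g)_{j(\P)|\kappa}$, so the two iteration maps agree on ordinals below $\kappa$; call this common restriction $\sigma$. Then, for an arbitrary $A\subseteq\kappa$ in $\P$, the paper writes $A = \tau^{\P}[T^\P_n, s]$ with $s\in[\kappa]^{<\omega}$ and $T^\P_n$ the theory of the first $n$ indiscernibles of $\P$. Elementarity gives $\pi_{\P|\delta,\R^+}^{\Psi^{g*G}}(A) = \tau^{\R^+}[T^{\R^+}_n,\sigma(s)]$ and $\pi_{j(\P)|\delta,\Q^+}^{j_G(\Psi^g)}(A) = \tau^{\Q^+}[T^{\Q^+}_n,\sigma(s)]$. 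Minimality forces $T^{\R^+}_n = T^{\Q^+}_n$, so the two images coincide. This handles all subsets of $\kappa$ in $\P$, hence all of $\iota=\kappa^+$.

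Your route instead tries to get strategy agreement at $\P|\iota$ (not just $\P|\kappa$) via uniqueness of hod pair strategies in the $OD$-full pointclass $\Gamma$, then pushes to the full direct limits and factors back through (iii). This avoids any appeal to minimality, so if it works it is more general. The cost is exactly the obstacle you flag: you must check that both $\Psi^{g*G}_{\P|\iota}$ and $j_G(\Psi^g)_{\P|\iota}$ are fullness-preserving hod pair strategies in $L(\Gamma)$ and that the relevant uniqueness theorem applies to $\P|\iota$. There is also a small gap you did not flag: the passage from (a) to (b) requires that the direct-limit map from $\P|\delta$ restricted to $\iota$ is determined by the strategy on $\P|\iota$ alone; this is standard (via hull condensation and the layered structure of the direct-limit system) but deserves a sentence. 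The paper's indiscernibles argument sidesteps both issues at the price of using the minimality hypothesis.

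For the second clause your argument and the paper's coincide: both use that $K$ is derived from the common $\iota$-restriction and that $\pi_{\P|\delta,\R^+}^{\Psi^{g*G}}$ is continuous at the successor cardinal $\tau$.
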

\begin{proof}
To see the second clause, first note that the first clause implies that $K$ is the extender derived from $\pi_{\P|\delta,\R^+}^{\Psi^{g*G}}\rest \iota$; furthermore, $\pi_{\P|\delta,\R^+}^{\Psi^{g*G}}$ is continuous at $\tau$ because $\tau$ is a successor cardinal in $\P$ and hence by (vi), $\tau^*_\Q = \pi_{\P|\delta,\R^+}^{\Psi^{g*G}}(\tau).$

To see the first clause, we use the minimality assumption on our mouse $\P$. We note that $$\pi_{\P,\R^+}^{\Psi^{g*G}}\rest \kappa = \pi_{j(\P),\Q^+}^{j_G(\Psi^g)}\rest \kappa.$$ This is because $\Psi^{g*G}_{\P|\kappa} = j(\Psi^g)_{j(\P)|\kappa}$. We write $\sigma$ for this map. Let $T^\P_n$ be the theory of the first $n$-indiscernibles for $\P$. For any $A\subseteq \kappa$, let $\tau$ be a term such that $A = \tau^{\P}[T^\P_n, s]$ for some $s\in [\kappa]^{<\omega}$. Then $\pi_{\P,\R^+}^{\Psi^{g*G}}(A) = \tau^{\R^+}[T^{\R^+}_n, \sigma(s)].$ Similarly, $\pi_{j(\P),\Q^+}^{j(\Psi^{g})}(A) = \tau^{\Q^+}[T^{\Q^+}_n, \sigma(s)].$ By the minimality assumption, $T^{\R^+}_n = T^{\Q^+}_n$ for all $n$. This means $$\pi_{\P|\delta,\R^+}^{\Psi^{g*G}}(A) =\pi_{j(\P)|\delta,\Q^+}^{j(\Psi^{g})}(A)$$ as desired.

\end{proof}

By the choice of $\Q$, (vi), (vii), and Lemma \ref{lem:init_equal}, we easily get that 
\begin{itemize}
    \item[(viii)] $\M_\Q\lhd \Q$.
    \item[(ix)] $\Q|\tau^*_\Q = \R|\tau^*_\Q.$
\end{itemize}

Let $\T$ be the normal tree on $\Q$ according to $\Sigma$ with last model $\M_\infty(\Q,\Sigma)$ and $\U$ the normal tree on $\R$ according to $\Lambda$ with last model $\M_\infty(\R,\Lambda)$.

\begin{lemma}\label{lem:mainbranch}
Suppose $\alpha < \textrm{lh}(\T)$ is on the main branch of $\T$ and $\beta < \textrm{lh}(\U)$ is on the main branch of $\U$. Suppose 
\begin{itemize}
\item $\M^\T_\alpha | \pi^\T_{0,\alpha}(\kappa_\Q) = \M^\U_\beta | \pi^\U_{0,\beta}(\kappa_\R)$,
\item the generators of $[0,\alpha]_T$ are contained in $\pi^\T_{0,\alpha}(\kappa_\Q)$,
\item the generators of $[0,\beta]_U$ are contained in $\pi^\U_{0,\beta}(\kappa_\R)$.
\end{itemize}
Then letting $\kappa^*=\pi^\T_{0,\alpha}(\kappa_\Q)=\pi^\U_{0,\beta}(\kappa_\R)$, $\alpha^*+1$ be the successor $\alpha$ on the main branch of $\T$ (if one exists) and $\beta^*+1$ the successor of $\beta$ on the main branch of $\U$ (if one exists), then if $E^\T_{\alpha^*}$ has critical point $\kappa^*$, then $E^\U_{\beta^*}$ also has critical point $\kappa^*$ and $E^\T_{\alpha^*} = E^\U_{\beta^*}$.
\end{lemma}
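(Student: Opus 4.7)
The plan is to exploit condition (iii) of the setup preceding the lemma, namely the agreement $\pi^{\Sigma^+}_{\Q^+,\infty}\rest \iota_\Q = \pi^{\Lambda^+}_{\R^+,\infty}\rest \iota_\R$, together with the generator hypotheses, to force the next main-branch extenders on the two sides to coincide. The underlying picture is that both tail maps $\pi^\T_{\alpha,\infty}: \M^\T_\alpha \to \M_\infty(\Q,\Sigma) \hookrightarrow \M_\infty$ and $\pi^\U_{\beta,\infty}: \M^\U_\beta \to \M_\infty(\R,\Lambda) \hookrightarrow \M_\infty$ send the common $\kappa^*$ to $\Theta$, and should agree on the common initial segment $\M^\T_\alpha|\kappa^*=\M^\U_\beta|\kappa^*$; this agreement descends from (iii) via the generator-bound hypothesis, which guarantees that no unexpected new generators below $\kappa^*$ are introduced along $[0,\alpha]_T$ or $[0,\beta]_U$ that could distinguish the two sides.

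Next, I would factor the tail maps through the first post-$\alpha$ (resp.\ post-$\beta$) main-branch extender: letting $\eta = \pred_\T(\alpha^*+1) \le \alpha$ and $\eta' = \pred_\U(\beta^*+1) \le \beta$, we have $\pi^\T_{\eta,\infty} = \pi^\T_{\alpha^*+1,\infty}\circ \pi_{E^\T_{\alpha^*}}$ and $\pi^\U_{\eta',\infty} = \pi^\U_{\beta^*+1,\infty}\circ \pi_{E^\U_{\beta^*}}$, with $\pi^\T_{0,\eta}(\kappa_\Q) = \pi^\U_{0,\eta'}(\kappa_\R) = \kappa^*$ by the generator hypothesis. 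Using the agreement of the tail maps and the fact that $E^\T_{\alpha^*}$ has critical point $\kappa^*$, one reads off the action of $E^\T_{\alpha^*}$ on ordinals below its length from the known image in $\M_\infty$; a parallel reading on the $\U$-side then forces $E^\U_{\beta^*}$ to have critical point $\kappa^*$ (if it had a different critical point, the factored image would send $\kappa^*$ to something other than $\Theta$ via the $\U$-side, contradicting the common value). Once both extenders have critical point $\kappa^*$ and the same ``image data'' under their respective tail maps into $\M_\infty$, coherence of the extender sequences (agreeing below $\kappa^*$ by hypothesis) forces $E^\T_{\alpha^*}=E^\U_{\beta^*}$ by the standard extender-uniqueness argument.

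The main obstacle will be rigorously verifying that the generator bound propagates through the tail iterations to give the tail-map agreement needed. Concretely, one must check that the normal-tree structures and the fullness preservation of $\Sigma$ and $\Lambda$ (inherited from the underlying hod-pair strategies of $(j(\P),j(\Psi^g))$ and $(\P,\Psi^{g*G})$) rule out any ``shift'' that would allow the $\U$-side branch to use a different kind of extender while still respecting (iii). Once this bookkeeping is in place, the equality of the two extenders follows by a direct comparison through the common factor into $\M_\infty$.
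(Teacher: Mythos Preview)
Your proposal captures the paper's core idea: use condition (iii) together with an induction along the main branches to obtain agreement of the tail maps $\pi^{\Sigma^+}_{\M^{\T,+}_\alpha,\infty}\rest\iota^* = \pi^{\Lambda^+}_{\M^{\U,+}_\beta,\infty}\rest\iota^*$, and then read off the extender equality from this. Two points deserve comment.

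First, for the critical-point conclusion the paper does \emph{not} argue via the tail-map agreement as you suggest; it gives a shorter structural argument using the uniqueness of the strong cardinal. If $\beta^*+1$ failed to exist, or $E^\U_{\beta^*}$ had critical point $>\kappa^*$, then by normality all later main-branch extenders in $\U$ would also have critical point $>\kappa^*$, so $\kappa^*$ would remain a strong cardinal of $\M_\infty(\R,\Lambda)$. But $\pi^\T_{0,\infty}(\kappa_\Q)>\kappa^*$ is also a strong cardinal there (since $\M_\infty(\Q,\Sigma)\unlhd\M_\infty(\R,\Lambda)$), contradicting minimality of $\P$. Your route via tail-map agreement can be made to work, but the paper's counting argument is cleaner and does not require having the agreement in hand first.

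Second, you are missing a step: after both extenders are known to have critical point $\kappa^*$, the paper invokes the initial segment condition to conclude $\lh(E^\T_{\alpha^*})=\lh(E^\U_{\beta^*})=:\xi$ and that $(\M^\T_{\alpha^*}||\xi,\Sigma_{\M^\T_{\alpha^*}||\xi})=(\M^\U_{\beta^*}||\xi,\Lambda_{\M^\U_{\beta^*}||\xi})$. Without this, your ``coherence'' claim is unjustified and the final extender computation does not go through. The explicit computation is then $(a,A)\in E^\T_{\alpha^*}\Leftrightarrow a\in\pi^{\Sigma^+}_{\M^{\T,+}_\alpha,\infty}(A)\Leftrightarrow a\in\pi^{\Lambda^+}_{\M^{\U,+}_\beta,\infty}(A)\Leftrightarrow(a,A)\in E^\U_{\beta^*}$, where the outer equivalences use that $\pi^\Sigma_{\alpha^*+1,\infty}$ (resp.\ its $\U$-analogue) and the factor map into $\M_\infty(\Q^+,\Sigma^+)$ fix $a$; you should make these fixing facts explicit rather than appealing to a ``standard'' argument. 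Finally, the induction you allude to (``generator bound propagates'') should be stated outright: the hypothesis at stage $(\alpha,\beta)$ is exactly $\pi^\T_{0,\alpha}\rest\iota_\Q=\pi^\U_{0,\beta}\rest\iota_\R$, which is what makes the tail-map agreement descend from (iii).
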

\begin{proof}

We show this by induction on the branches $[0,\alpha]_T, [0,\beta]_U$. We assume the lemma holds for pairs $(\alpha',\beta')$ where $\alpha'\in [0,\alpha)_T$, $\beta'\in [0,\beta)_U$. We first make a couple of simple observations. 

First, suppose $\alpha^*+1$ exists, so $E^\T_{\alpha^*}$ is defined. Suppose crt($E^\T_{\alpha^*}) =\kappa^*$. Then $E^\U_{\beta^*}$ is defined and crt($E^\U_{\beta^*})=\kappa^*$. This is easily seen to be true as otherwise, $\kappa_{\M_\infty(\R,\Lambda)} < \kappa_{\M_\infty(\Q,\Sigma)}$, but $\M_\infty(\Q,\Sigma)\unlhd \M_\infty(\R,\Lambda)$. This implies $\M_\infty(\Q,\Sigma)$ has more than one strong cardinal. Contradiction. 

Now observe that if $\kappa^* < \pi^\Lambda_{\R,\infty}(\kappa_\R)$, then $E^\U_{\beta^*}$ exists and crt($E^\U_{\beta^*})=\kappa^*$. This is because if crt$(E^\U_{\beta^*}) > \kappa^*$, then future extenders used along the main branch of $\U$ must have critical points $>\kappa^*$, but this means $\kappa^*$ is a strong cardinal of $\M_\infty(\R,\Lambda)$ since $\alpha$ is on the main branch of $\U$. Since $\pi^\Lambda_{\R,\infty}(\kappa_\R)$ is a strong cardinal of $\M_\infty(\R,\Lambda) > \kappa^*$, this means $\M_\infty(\R,\Lambda)$ has more than one strong cardinal. Contradiction. A similar statement holds for the $\T$-side, namely if $\kappa^* < \pi^\Sigma_{\Q,\infty}(\kappa_\Q)$, then $E^\T_{\alpha^*}$ exists and crt($E^\T_{\alpha^*})=\kappa^*$

The two observations above easily imply most of the conclusions of the lemma except for the last equality. So we assume that $E^\T_{\alpha^*}, E^\U_{\beta^*}$ both exist and have critical point $\kappa^*$. By the initial segment condition, we know that $\lh(E^\T_{\alpha^*})=\lh(E^\U_{\beta^*})=_{def} \xi$. Furthermore, $(\M^\T_{\alpha^*}||\xi,\Sigma_{\M^\T_{\alpha^*}||\xi}) = (\M^\U_{\beta^*}||\xi, \Lambda_{\M^\U_{\beta^*}||\xi})$. Letting $\iota^* = \pi^\T_{0,\alpha}(\iota_\Q)=\pi^\U_{0,\beta}(\iota_\R)$, we have 
\begin{equation}\label{eqn:factorequal}
\pi^{\Sigma^+}_{\M^{\T,+}_\alpha,\infty}\rest \iota^* = \pi^{\Lambda,+}_{\M^{\U,+}_\beta,\infty}\rest \iota^*.
\end{equation}
This follows from (iii) and by our induction hypothesis which implies that $\pi^\T_{0,\alpha}\rest \iota_\Q = \pi^\U_{0,\beta}\rest \iota_\R$. To see (\ref{eqn:factorequal}), let $\xi < \iota^*$, so $\xi = \pi^\T_{0,\alpha}(f)(a) = \pi^\U_{0,\beta}(f)(a)$ for $f\in \Q|\iota_\Q$ and $a\in [\kappa^*]^{<\omega}$. So 
\begin{align*}
\pi^{\Sigma^+}_{\M^{\T,+}_\alpha,\infty}(\xi) & = \pi^{\Sigma^+}_{\Q^+,\infty}(\pi^\T_{0,\alpha}(f))(a) \\
& = \pi^{\Lambda^+}_{\R^+,\infty}(\pi^\U_{0,\beta}(f))(a) \\
& = \pi^{\Lambda^+}_{\M^{\U,+}_\beta,\infty}(\xi). 
\end{align*}
In fact, we get that $\pi^{\Sigma^+}_{\M^{\T,+}_\alpha,\infty}$ and $\pi^{\Lambda,+}_{\M^{\U,+}_\beta,\infty}$ agree on all the elements of the $H_{\iota^*}$ of the models.

Now we can show the equality of the two extenders by the following calculations: let $a\in [\lambda(E^\T_{\alpha^*})]^{<\omega}$ and $A\subseteq [\kappa^*]^{|a|}$,
\begin{align*}
(a,A)\in E^\T_{\alpha^*} & \Leftrightarrow a\in \pi_{E^\T_{\alpha^*}}(A) \\ & \Leftrightarrow a\in \pi^{\Sigma^+}_{\M^{\T,+}_\alpha,\infty}(A) \\ & \Leftrightarrow a\in \pi^{\Lambda^+}_{\M^{\U,+}_\beta,\infty}(A) \\ & \Leftrightarrow (a,A)\in E^\U_{\beta^*}.
\end{align*}
The second equivalence follows from the following facts: 
\begin{itemize}
\item $\pi^\Sigma_{\alpha,\infty}(A) = \pi^\Sigma_{\alpha^*+1,\infty}\circ \pi_{E^\T_{\alpha^*}}(A)$. 
\item By the general properties of direct limits, there is a factor map $\sigma: \M_\infty(\Q,\Sigma)\rightarrow \M_\infty(\Q^+,\Sigma^+)$ such that crt$(\sigma) = \kappa_{\M_\infty(\Q,\Sigma)}$.
\item $\pi^\Sigma_{\alpha^*+1,\infty}(a) = a$. 
\end{itemize}
Combining the above facts, we see that $a\in \pi_{E^\T_{\alpha^*}}(A)$ is equivalent to $a\in \sigma\circ \pi^\Sigma_{\alpha^*+1,\infty}\circ \pi_{E^\T_{\alpha^*}}(A)= \pi^{\Sigma^+}_{\M^{\T,+}_\alpha,\infty}(A).$ This  gives the second equivalence. The third equivalence follows from (\ref{eqn:factorequal}) and the remark after. The last equivalence is proved just like the second equivalence.
\end{proof}

Now we have two cases:

\noindent{\textbf{Case 1:}} $\pi^\T_{0,\infty}(\kappa_\Q)=\pi^\U_{0,\infty}(\kappa_\R)$.

Let this ordinal be $\gamma$. Then $\gamma$ is the strong cardinal of $\M_\infty(\R,\Lambda)$ and of $\M_\infty(\Q,\Sigma)$. Furthermore,  $\pi^{\M_\Q}_F(\M_\Q) \lhd \M_\infty(\R,\Lambda)$ where $F$ is the extender derived from $\pi^\T_{0,\infty}$ and $\pi^{\M_\Q}_F(\M_\Q)$ is the $0$-ultrapower embedding derived from $F$ on $\M_\Q$. Since $\rho_1(\M_\Q)< \tau^*_\Q$, by elementarity, $\rho_1(\pi^{\M_\Q}_F(\M_\Q)) < \pi^\T_{0,\infty}(\tau^*_\Q)$. On the other hand, since $\tau$ is a cardinal of $\P$, $\tau^*_\Q$ is a (successor) cardinal of $\R$ and is a continuity point of $\pi^\U_{0,\infty}$. This means $\pi^\U_{0,\infty}(\tau^*_\Q) = \pi^{\M_\Q}_F(\tau^*_\Q)$ (by Lemma \ref{lem:mainbranch}) is a cardinal of $\M_\infty(\R,\infty)$, but $\pi^{\M_\Q}_F(\M_\Q)$ witnesses $\pi^\U_{0,\infty}(\tau^*_\Q)$ is not a cardinal of $\M_\infty(\R,\infty)$. We have a contradiction.

\noindent{\textbf{Case 2:}} $\pi^\T_{0,\infty}(\kappa_\Q) < \pi^\U_{0,\infty}(\kappa_\R)$.

Let $\alpha$ be the least in $\U$ such that the strong cardinal of $\M^\U_\alpha = \pi^\T_{0,\infty}(\kappa_\Q)$. It's easy to see such an $\alpha$ exists and in fact $\alpha$ in on the main branch of $\U$ (see the analysis in Lemma \ref{lem:mainbranch}). Now we have that letting $E$ be the extender on the main branch of $\U$ that is applied to $\M^\U_\alpha$, then lh$(E)\geq o(\pi^{\M_\Q}_F(\M_\Q))$. This is because $i_E(\pi^\T_{0,\infty}(\kappa_\Q))$ is an inaccessible cardinal of $\M_\infty(\R,\Lambda)$ and by our case hypothesis, $\pi^{\M_\Q}_F(\M_\Q)\lhd \M_\infty(\Q,\Sigma)$, so by the agreement between models, it is easy to see that $\pi^{\M_\Q}_F(\M_\Q)\lhd \M^\U_\alpha$. But this leads to a contradiction as in Case 1 because $\pi^\U_{0,\alpha}(\tau^*_\Q)$ is not a cardinal of $\M^\U_\alpha$.

This completes the proof of Theorem \ref{thm:TSfail}. One weakness of the above proof is it seems very hard to generalize Lemma \ref{lem:plusequal} to obtain the agreements between the two direct limits at a strong cardinal of those limits above $\Theta$. Therefore, one may hope to prove Tower Sealing holds in a generic extension of a hod mouse with two (or more) isolated strong cardinals. We show that this is not the case.

\begin{theorem}[$\sf{AD}^+$]\label{thm:TSfails2}
    Suppose $(\P,\Psi)$ is an lbr hod pair such that $\P \models $``there is a proper class of Woodin cardinals and there are finitely many strong cardinals". Let $\kappa$ be the largest strong cardinal of $\P$, and let $g\subseteq Coll(\omega,\kappa^+)$. Suppose there is no subcompact cardinal in $\P$. Then in $\P[g]$, Tower Sealing fails at every Woodin cardinal.
\end{theorem}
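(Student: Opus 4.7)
The plan is to adapt the argument of Theorem \ref{thm:TSfail}(2) to the multi-strong-cardinal setting, with the no-subcompact hypothesis playing the role that minimality of $\P$ played there. Suppose for contradiction that Tower Sealing holds in $\P[g]$ at some Woodin cardinal $\delta>\kappa$, let $G\subseteq\mathbb{Q}_{<\delta}$ be $\P[g]$-generic and $j_G:\P[g]\to M$ the associated embedding, and assume $\Gamma:=\Gamma^{g*G}_\infty=j_G(\Gamma^g_\infty)$, with $\Theta=\Theta^\Gamma$. Form the two direct limits $\M^G_\infty$ of countable $\Psi^{g*G}$-iterates of $\P|\delta$ and $\M^M_\infty$ of countable $j_G(\Psi^g)$-iterates of $j_G(\P)|\delta$. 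The Tower Sealing assumption gives $i_G(\kappa)=i_M(\kappa)=\Theta$ and $\M^G_\infty|\Theta=\M^M_\infty|\Theta=\hod^{L(\Gamma)}|\Theta$, and the condensation argument of Lemma \ref{lem:plusequal}, which only depends on fullness preservation of the strategies under $\AD^+$, lifts this agreement to $(\Theta)^+$.

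Let $\M\lhd j_G(\P)|\delta$ be the minimal level with $\rho_\omega(\M)<\iota^+$ and $o(\M)>\iota^+$, where $\iota=\kappa^+$, and choose $(\Q,\Sigma)\lhd(\Q^+,\Sigma^+)\in I(j_G(\P),j_G(\Psi^g))$ together with $(\R,\Lambda)\lhd(\R^+,\Lambda^+)\in I(\P,\Psi^{g*G})$ as in items (i)--(v) of Theorem \ref{thm:TSfail}(2). The crucial step is to establish the analog of Lemma \ref{lem:init_equal}, namely
\begin{equation}\label{eqn:agree2}
\pi^{\Psi^{g*G}}_{\P|\delta,\R^+}\rest\iota \;=\; \pi^{j_G(\Psi^g)}_{j_G(\P)|\delta,\Q^+}\rest\iota.
\end{equation}
In Theorem \ref{thm:TSfail}(2), equation \eqref{eqn:agree2} followed from minimality of $\P$ via equality of the theories of indiscernibles of $\R^+$ and $\Q^+$. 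Here I instead use the no-subcompact assumption together with the presence of only finitely many strong cardinals $\kappa_0<\kappa_1<\cdots<\kappa_n=\kappa$ of $\P$, and proceed by induction on $i\le n$ to show that the two embeddings agree on $\kappa_i^+$. At each step, the no-subcompact condition bounds the length of every extender used on either sequence by the next inaccessible past its critical point, so images of cardinals below $\kappa_i^+$ are computed as suprema over bounded subsets on which agreement is already known; fullness preservation of $\Psi^{g*G}$ and $j_G(\Psi^g)$ in $L(\Gamma)$, together with the agreement $\M^G_\infty|\Theta=\M^M_\infty|\Theta$, propagates agreement from one strong cardinal to the next.

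With \eqref{eqn:agree2} in hand, define $\M_\Q=\Ult_0(\M,K)$ for $K$ the extender of length $\iota$ derived from $\pi^{j_G(\Psi^g)}_{j_G(\P)|\delta,\Q^+}\rest\iota$. As before, $\rho_1(\M_\Q)<\tau^*_\Q$ and $\M_\Q\lhd\Q^+$, and by \eqref{eqn:agree2} also $\M_\Q\lhd\R^+$. I would then run the normal trees $\T$ on $\Q$ via $\Sigma$ and $\U$ on $\R$ via $\Lambda$ with last models $\M_\infty(\Q,\Sigma)\unlhd\M_\infty(\R,\Lambda)$, prove the analog of Lemma \ref{lem:mainbranch} for these trees, and split into Cases~1 and 2 of Theorem \ref{thm:TSfail}(2). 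In either case, the image $\pi^{\M_\Q}_F(\M_\Q)$ under the main branch of $\T$ witnesses that an ordinal which is a cardinal of one of the two direct limit models fails to be a cardinal of the other, a contradiction.

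The main obstacle will be the proof of \eqref{eqn:agree2}. Without minimality of $\P$ the indiscernible-theory argument is unavailable, and the presence of multiple strong cardinals means the iteration maps on $\P$ and $j_G(\P)$ could in principle disagree on bounded subsets of $\iota$ even while matching below $\Theta$ and on bounded subsets of $\kappa$. The no-subcompact hypothesis is exactly what is needed to rule this out: it constrains the lengths of extenders used in either iteration and forces the iteration maps to be continuous at each $\kappa_i^+$. Making this precise, likely via an inductive comparison argument leveraging the direct limit analysis of \cite{normalization_comparison}, will be the technical heart of the proof.
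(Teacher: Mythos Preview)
Your proposal correctly identifies that the analogue of Lemma \ref{lem:init_equal} is the crux, but your sketch for proving \eqref{eqn:agree2} does not work. The no-subcompact hypothesis constrains which ordinals on the $\P$-sequence index extenders; it does not bound the lengths of extenders appearing in the iteration trees that build $\R^+$ and $\Q^+$, and it does not force the direct-limit maps to be continuous at each $\kappa_i^+$. In Theorem \ref{thm:TSfail}(2) the agreement on $\iota$ really came from minimality via equality of the indiscernible theories $T_n^{\R^+}=T_n^{\Q^+}$, and once $\P$ is non-minimal the strategies $\Psi^{g*G}$ and $j_G(\Psi^g)$ can genuinely disagree on subsets of $\iota$; there is no induction on the finitely many strongs that rescues this. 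A second, related difficulty is that Lemma \ref{lem:mainbranch} and the Case 1/Case 2 split both rely on the contradiction ``then $\M_\infty(\Q,\Sigma)$ has more than one strong cardinal'', which is specific to the one-strong situation; with $n$ strongs the main-branch extenders of $\T,\U$ can have critical points at any of $n$ levels and the matching argument breaks down.

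The paper does not try to salvage \eqref{eqn:agree2}. It abandons the direct-limit comparison of $\P$-iterates versus $j_G(\P)$-iterates entirely and instead runs a phalanx comparison inside $\P$: take $\Q=\P|\delta^{+,\P}$, fix $X\prec\Q$ of size $\kappa^+$ with $X\cap\kappa^{++}\in\kappa^{++}$, and coiterate $\Q$ against the phalanx $(\Q,M_X,\gamma_X)$ into a common background construction, using the stable/unstable machinery. Here the no-subcompact hypothesis is used for one specific purpose: to choose $X$ so that $\gamma_X$ does not index an extender on the $\Q$-sequence, which rules out the JSZ anomaly in the phalanx comparison. The finiteness of the set of strong cardinals is used in the branch analysis (Claim \ref{claim:taumainbranch}): if an extender on the relevant branch of $\U$ had critical point below $\gamma_\xi$, one would produce $n+1$ strong cardinals in a model that has only $n$. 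The outcome of the comparison is that the pullback strategy $\Lambda_1=\Sigma^{\pi_X}$ is projective in $(\Sigma^\U_0)_{S^*}$, where $S^*\lhd\Q$ is the collapsing level of $\gamma_X$. Transferring this to the tower embedding $j_G$, every $A\in\Gamma^{g*G}_\infty$ is projective in the strategy of the collapsing level of $\omega_1^{\P[g]}$ in $M$, which lies strictly inside $j_G(\Gamma^g_\infty)$; hence $\Gamma^{g*G}_\infty\subsetneq j_G(\Gamma^g_\infty)$.
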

\begin{proof}
    Suppose there are $n$ strong cardinals in $\P$ with $\kappa$ being the largest one. Let $\delta > \kappa$ be a Woodin cardinal. We show Tower Sealing fails at $\delta$ in $\P[g]$. Let $\Q = \P|\delta^{+,\P}$ and fix an $X\prec \Q$ with $|X| = \kappa^+$ and $X\cap \kappa^{++}\in \kappa^{++}$. Let $\pi: M_X\rightarrow X$ be the uncollapse map with crt$(\pi) = \gamma_X$. We may choose $X$ so that $\gamma_X$ does not index an extender on the $\Q$-sequence and cof$(\gamma_X) > \kappa$; there is a $\kappa^+$-club of such $X$ because there are no subcompact cardinals in $\P$. As in \cite{steel2022condensation, normalization_comparison}, we coiterate $\Q$ and the phalanx $(\Q,M_X,\gamma_X)$ into a common hod pair construction.

    More precisely, let $\Sigma = \Psi_{\Q}$ and write $M$ for $M_X$. Fix a coarse strategy pair $((N^*,\in, w, \mathcal{F},\Psi^*),\Psi^{**})$, in the sense of \cite{normalization_comparison}, that captures $\Sigma$, and let $\mathbb{C}$ be the maximal $(w,\mathcal{F})$
construction, 
 with models $M_{\nu,l}$ and induced 
strategies $\Omega_{\nu,l}$. Let $\delta^* = \delta(w)$.
By \cite[Theorem 3.26]{steel2022condensation}, $(*)(M,\Sigma)$ holds,
so we can fix $\la \eta_0,k_0\ra$ lex least such that
  $(\Q,\Sigma)$ iterates to $(M_{\eta_0,k_0},\Omega_{\eta_0,k_0})$, and for 
  all $(\nu,l) <_{\lex}(\eta_0,k_0)$, $(\Q,\Sigma)$ iterates strictly past
  $(M_{\nu,l},\Omega_{\nu,l})$.
Let $\mathcal{U}_{\nu,l}$ be the unique normal tree on $\Q$ witnessing $(\Q,\Sigma)$ 
  iterates past $(M_{\mu,l},\Omega_{\nu,l})$.\footnote{We note that since $k(\Q) = 0$, $\Q$ is strongly stable in the sense of \cite{normalization_comparison}. The possibility that $(\Q,\Sigma)$ iterates to some type 2 pair generated by $(M_{\eta_0,k_0},\Omega_{\eta_0,k_0})$ doesn't occur here.}

    To make the main points  transparent and simplify certain arguments, we assume $n=2$. We write $\kappa_0^\Q < \kappa_1^\Q$ for the strong cardinals of $\Q$ and for any non-dropping iterate $\R$ of $\Q$, we write $\kappa_0^\R,  \kappa_1^\R$ for the strong cardinals of $\R$. Similarly, we denote $\kappa_0^M, \kappa_1^M$ for the strong cardinals of $M$.

  We define trees $\S_{\nu,l}$ on $(\Q,M,\gamma_X)$ for certain $(\nu,l)\leq (\eta_0,k_0)$. 
Fix $(\nu,l)\leq(\eta_0,k_0)$
 for now, and assume $\S_{\nu',l'}$ is defined whenever $(\nu',l')<(\nu,l)$. 
 Let $\U = \U_{\nu,l}$, and for $\tau < \lh(\U)$, let
\[
\Sigma_\tau^\U = \Sigma_{\U \restriction (\tau+1)}
\]
be the tail strategy for $\M_\tau^\U$ induced by $\Sigma$. 
 We proceed to define $ \mathcal{S} = \S_{\nu,l}$,
by comparing the phalanx $(\Q, M,\gamma_X)$ (using strategy $(\Sigma,\Sigma^{\pi_X})$) with $M_{\nu,l}$. As we define
 $\mathcal{S}$, we 
  lift $\mathcal{S}$ to a padded tree $\mathcal{T}$ on $\Q$, by copying. Let us
write
\[
\Sigma_\theta^\T = \Sigma_{\T \restriction (\theta+1)}
\]
for the tail strategy for $\M_\theta^\T$ induced by $\Sigma$.

We let $\Q = \M^\S_0$, $M = \M^\S_1$. For $\theta < lh(\S)$, we will have copy map $\pi_\theta$ from $\M^\S_\theta$ into $\M^\T_\theta$. The map $\pi_\theta$ is a nearly elementary.\footnote{See \cite[Section 2.3]{steel2022condensation} for a summary of the types of elementary maps between mouse pairs.} We attach the complete strategy
\[
 \Lambda_\theta = (\Sigma_\theta^\T)^{\pi_\theta}
\]
  to $\M^\S_\theta$. We also
  define a non-decreasing sequence of ordinals $\lambda_\theta = \lambda^\S_\theta$ 
  that measure
   agreement between models of $\S$, and tell us which 
   model we should apply the next
    extender to.

We start with 
\begin{center}
$\M^\S_0 = \Q, \M_1^\S = M, \gamma_0 = \gamma_X$,
\end{center}
and
\begin{center}
$\M^\T_0 = \M^\T_1 = \Q,  \pi_0 = id,  \pi_1 = \pi_X, \sigma_0 = \pi_X$,
\end{center}
and
\begin{center}
$\Lambda_0 = \Sigma, \mbox{ } \Lambda_1 = \Sigma^{\pi_1}.$
\end{center}

We say that $0, 1$ are distinct roots of $\S$. We say that $0$ is
unstable, and $1$ is stable. As we proceed, we shall declare
additional nodes $\theta$ of $\S$ to be unstable.
We do so because $(\M_\theta^\S,\Lambda_\theta) = 
(\M_\gamma^\U, \Sigma_\gamma^\U)$\footnote{The external
strategy agreement does not seem important to require for $\theta$ to be declared
unstable. We should be able to declare $\theta$ unstable when only the models agree.} for some
$\gamma$, and when we do so, we shall
immediately define $\M_{\theta+1}^\S$, as well as 
$\sigma_\theta$ and $\gamma_\theta$. Here $\Lambda_{\theta+1} =
\Lambda_\theta^{\sigma_\theta}$.
In this case, $[0,\theta]_S$ does not drop, and
all $\xi \le_S \theta$ are also unstable. We regard
$\theta+1$ as a new root of $\S$. This is the only way new roots are
constructed.

If $\theta$ is unstable, then we define
\[
\gamma_\theta = i^\S_{0,\theta}(\gamma_0).
\]

The construction of $\S$ takes place in rounds in which we either
add one stable $\theta$, or one unstable $\theta$ and its
stable successor $\theta+1$. Thus the current last model is always
stable, and all extenders used in $\S$ are taken from stable models.
If $\gamma$ is stable, then $\lambda_\gamma = \lambda(E_\gamma^{\S})$.

For $\theta < lh(\S)$, let $\pi_\theta: \M^\S_\theta\rightarrow \M^\T_\theta$ be the copy map.  We are maintaining by induction that the last node $\gamma$
of our current $\S$  is stable, and

\bigskip
\noindent \textbf{Induction hypotheses $(\dagger)_\gamma$.} If $\theta < \gamma$ 
and $\theta$ is unstable, then 
\begin{itemize}
\item[(1)] $0\leq_\S\theta$ and $[0,\theta]_\S$ does not drop (in model or degree),
and every $\xi \leq_S \theta$ is unstable,
\item[(2)] there is a $\gamma$ such that $(\M_\theta^\S, \Lambda_\theta) = 
(\M_\gamma^\U, \Sigma_\gamma^\U)$,
 \item[(3)] $\M_{\theta +1}^\T = \M_\theta^\T$, and 
$\pi_{\theta+1} = \pi_\theta \circ \sigma_\theta: \M^\S_{\theta+1}\rightarrow \M^\T_{\theta+1} = \M^\T_\theta$.
\item[(4)] $\gamma_\theta$ does not index an extender on the $\M^\S_\theta$-sequence.
\end{itemize}

\medskip

Setting $\sigma_0 = \pi$, we have $(\dagger)_1$.

For a node $\gamma$ of $\S$, we write $\S$-pred$(\gamma)$ for the immediate 
$\leq_\S$-predecessor of $\S$. For $\gamma$ a node in $\S$, we set 
\begin{center}
st$(\gamma) = $ the least stable $\theta$ such that $\theta\leq_\S\gamma$,
\end{center}
and
\begin{center}
st$^*(\gamma)=\begin{cases}
 \text{st$(\gamma)$} & : \ \text{if st$(\gamma)=\theta+1$ for some unstable $\theta$} \\ 
\text{undefined} & : \ \text{otherwise}.
\end{cases}$
\end{center}

The construction of $\S$ ends when we reach a stable $\theta$ 
such that

\begin{enumerate}[(I)]
\item $M_{\nu,l} \lhd \M^\S_\theta$, or $M^\S_\theta = M_{\nu,l}$ and $\S$ drops, or
\item $\M^\S_\theta\unlhd M_{\nu,l}$, 
and $[$rt$(\theta),\theta]_\S$ does not drop in model
 or degree.
\end{enumerate}
If case (I) occurs, then we go on to define $\S_{\nu,l+1}$. If case (II) occurs, 
we stop the construction.

We now describe how to extend $\S$ one more step. First we assume $\S$ has successor
 length $\gamma+1$ and let $\M^\S_\gamma$ be the current last model, so that $\gamma$ is
  stable. Suppose $(\dagger)_\gamma$ holds. Suppose (I), (II) above do not hold 
  for $\gamma$, so that we have a least
   disagreement between $\M^\S_\gamma$ and $M_{\nu,l}$.  Suppose the least
disagreement involves only an extender $E$ on the sequence of $\M^\S_\gamma$.\footnote{Later, we will prove that this is the case.} 
Letting $\tau = \lh(E)$, we have
\begin{itemize}
\item $M_{\nu,l}|(\tau,0) = \M^\S_\gamma|(\tau,-1)$,\footnote{Recall 
$\M^\S_\gamma|(\tau,-1)$ is the structure obtained from $\M^\S_\gamma|\tau$
 by removing $E$. Sometimes, we will write $M^-$ for $M|(o(M),-1)$.} and
\item ${(\Omega_{\nu,l})}_{(\tau,0)}=(\Lambda_\gamma)_{(\tau,-1)}$.
\end{itemize}

We now describe how to extend $\S$ one more step. We set $E^\S_\gamma = E^+$ and $\lambda^\S_\gamma = \lambda_E$.\footnote{This is the notation used in \cite{normalization_comparison}, for an extender $E$ on the $M$-sequence, $E^+$ is the extender with generators $\lambda_E\cup \{\lambda_E\}$ that represents 
$i_F^{\textrm{Ult}(M,E)}\circ i^M_E$, where $F$ is the order zero total measure on $\lambda_E$ in Ult$(M,E)$. We also write $\hat{\lambda}(E^+) = \lambda_E$,  lh$(E^+) = \textrm{lh}(E)$. $E^+$ is the plus-type extender derived from $E$.} Let $\xi$ be the least such that 
crt$(E)<\lambda^\S_\xi$. We let $\S$-pred$(\gamma+1)=\xi$. Let $(\beta,k)$ be 
lex least such that either $\rho(\M^\S_\xi|(\beta,k))\leq $crt$(E)$ or 
$(\beta,k)=(\hat{o}(\M^\S_\xi),k(\M^\S_\xi))$. Set
\begin{center}
$\M^\S_{\gamma+1}=$Ult$(\M^\S_\xi|(\beta,k),E^+)$,
\end{center}
and let $\hat{i}^\S_{\xi,\gamma+1}$ be the canonical embedding. Let 
\begin{center}
$\M^\T_{\gamma+1}=$Ult$(\M^\T_\xi|(\pi_\xi(\beta),k),\pi_\gamma(E)^+)$,
\end{center}
and let $\pi_{\gamma+1}$ be given by the Shift Lemma. This determines
$\Lambda_{\gamma +1}$.

If $\xi$ is stable or
 $(\beta,k)< (\hat{o}(\M^\S_\xi),k(\M^\S_\xi))$, then we declare $\gamma+1$ to be stable.
 $(\dagger)_{\gamma+1}$ follows vacuously from $(\dagger)_\gamma$.

If $\xi$ is unstable and $E^+$
is not used in $\U$, then again we declare $\gamma+1$ stable. 
Again, $(\dagger)_{\gamma+1}$ follows vacuously from $(\dagger)_\gamma$.

 Finally, suppose 
 $\xi$ is unstable and $E^+$ is used in $\U$, say $E^+ = E^\U_\mu$.
 Let $\tau$ be such that \[e^\S_{\xi} = e^\U_\tau,\] where $e^\S_{\xi}$ is the sequence of extenders used on the branch $[0,\xi]_S$ and similarly for $e^\U_\tau$. So in particular,
\[
(\M^\S_{\xi},\Lambda_{\xi}) = (\M_\tau^\U, \Sigma_\tau^\U).
\]
 We have that 
 \[
 e^\S_{\gamma+1} = {e^\S_\xi}^\smallfrown \langle E^+ \rangle = {e^\U_\tau}^\smallfrown \langle E^+ \rangle = e^\U_{\mu+1}.
 \]
 \cite{steel2025JSZ} shows that $\tau = U-pred(\mu+1)$. We then we declare $\gamma+1$ to 
 be unstable
  and $\gamma+2$ stable. 
  We must define the tuple needed for $(\dagger)_{\gamma+2}$.
  Let $i = i_{\xi,\gamma+1}^\S$, and
  \begin{center}
  $\la N, G,\sigma,\gamma^* \ra = \la \M_\xi^\S,
\M_{\xi+1}^\S, \sigma_\xi,\gamma_\xi \ra.$
  \end{center}
  
We let
      
  \begin{center}
$\gamma_{\gamma+1} =  i(\gamma^*) = i^\S_{0,\gamma+1}(\gamma_0).$
\end{center}
Now we define $\M^\S_{\gamma+2}$ and $\sigma_{\gamma+1}$. Note that by our assumption that cof$(\gamma_X)>\kappa$ and the fact that crt$(E)\leq \kappa^{\M^\U_\tau}$, where $\kappa^{\M^\U_\tau}$ is the largest strong cardinal of $\M^\U_\tau$, $$\gamma_{\gamma+1} = \textrm{sup} \  i[\gamma^*].$$ Let $\M^\S_{\gamma+2} = \textrm{Ult}(G, E^+)$, $i^\S_{\xi+1,\gamma+2}$ be the ultrapower map, and $\sigma_{\gamma+1}: \M^\S_{\gamma+2} \rightarrow \M^\S_{\gamma+1}$ be the copy map and $\pi_{\gamma+1} = \pi_\gamma\circ \sigma_{\gamma+1}$.

If there is a least disagreement between $\M^\S_{\gamma+2}$ and $M_{\nu,l}$, it has to involve an extender $F$ from the sequence of $M^\S_{\gamma+2}$ (by \cite[Lemma 5.64]{normalization_comparison}). If no such $F$ exists, we leave $\lambda^\S_{\gamma+1},\lambda^\S_{\gamma+2}$ undefined. Otherwise, let 
\begin{center}
$\lambda^\S_{\gamma+2} = \lambda(F)$
\end{center}
and
\begin{center}
$\lambda^\S_{\gamma+1} = \textrm{min}(\lambda^\S_{\gamma+2}, \gamma_{\gamma+1})$.
\end{center}
The $\lambda^\S_\xi$'s tell us what model should an extender used in $\S$ be applied to.

\begin{claim}\label{claim:dag}
    $(\dag)_{\gamma+1}$ holds.
\end{claim}
\begin{proof}
    (1)--(3) are clear. (4) also follows because $\gamma_{\gamma+1} = i^\S_{0,\gamma+1}(\gamma_0)$. By the fact that $\gamma_0$ does not index an extender on the $\M^\S_0$-sequence and elementarity, $\gamma_{\gamma+1}$ does not index an extender on the $\M^\S_{\gamma+1}$-sequence.
\end{proof}

 If (I) or (II) holds at $\gamma+2$, then the construction of $\S$
            is over. Otherwise, we let $E_{\gamma+2}^{\S}$ be the least disagreement
            between $\M_{\gamma+2}^{\S}$ and $M_{\nu,l}$, and we set 
            \[
            \lambda^\S_{\gamma+1} = \inf(\gamma_{\gamma+1},\lambda(E_{\gamma+2}^{\S})).
            \]
            This completes the successor step in the construction of $\S$.

Now suppose we are given $\S \restriction \theta$, where
$\theta$ is a limit ordinal. Let $b=\Sigma(\T\rest\theta)$.
\\
\\
\noindent \textbf{Case 1.} There is a largest $\eta\in b$ such that $\eta$ is unstable.

\medskip

Fix $\eta$. There are two subcases.
\begin{enumerate}[(A)]
\item for all $\gamma\in b-(\eta+1)$, rt$(\gamma)=\eta+1$.
 In this case, $b-(\eta+1)$ is a branch of $\S$. Let $\S$ choose this branch,
\begin{center}
$[\eta+1,\theta)_\S=b-(\eta+1)$,
\end{center}
and let $\M^\S_\theta$ be the direct limit of the $\M^\S_\gamma$ for sufficiently 
large $\gamma\in b-(\eta+1)$. We define 
the branch embedding $i^\S_{\gamma,\theta}$ a usual and 
$\pi_\theta:\M^\S_\theta\rightarrow \M^\T_\theta$ is given by 
the fact that the copy maps commute with the branch embeddings.
 We declare $\theta$ to be stable.

\item for all $\gamma\in b-(\eta+1)$, rt$(\gamma)=\eta$. Let $\S$ choose
\begin{center}
$[0,\theta)_\S = (b-\eta)\cup[0,\eta]_\S$,
\end{center}
and let $\M^\S_\theta$ be the direct limit of the $\M^\S_\gamma$ for 
sufficiently large $\gamma\in b$.
 Branch embeddings $i^\S_{\gamma,\theta}$ for $\gamma\geq \eta$ are defined as usual.
  $\pi_\theta:\M^\S_\theta\rightarrow \M^\T_\theta$ is given by the fact that copy
   maps commute 
  with branch embeddings. We declare $\theta$ to be stable.
\end{enumerate}
Since $\theta$ is stable,  $(\dagger)_\theta$ follows at once from
$\forall \gamma < \theta  \ (\dagger)_\gamma$.

\noindent \textbf{Case 2.} There are boundedly many unstable ordinals in $b$ but no largest one.

We let $\eta$ be the sup of the unstable ordinals in $b$. Let $\S$ choose 
\begin{center}
$[0,\theta)_\S = (b-\eta)\cup[0,\eta]_\S$,
\end{center}
and define the corresponding objects as in case 1(B). We declare
 $\theta$ stable, and again $(\dagger)_\theta$ is immediate.

\noindent \textbf{Case 3.} There are arbitrarily large unstable ordinals in $b$. 
In this case, $b$ is a disjoint union of pairs $\{\gamma,\gamma+1\}$ such that
 $\gamma$ is unstable and $\gamma+1$ is stable. We set
\begin{center}
$[0,\theta)_\S=\{\xi\in b \ | \ \xi \textrm{ is unstable}\},$
\end{center}
and let $\M^\S_\theta$ be the direct limit of the $\M^\S_\xi$'s for $\xi\in b$ unstable.
 There is no dropping in model or degree along $[0,\theta)_\S$. We define maps
  $i^\S_{\xi,\theta}, \pi_\theta$ as usual. If $(\M^\S_\theta, \Lambda_\theta)$ is not a pair of 
the form $(\M_\tau^\U, \Sigma_\tau^\U)$, then
 we declare $\theta$ stable and $(\dagger)_\theta$ is immediate. 
 
     Suppose that $(\M_\theta^\S, \Lambda_\theta)$ is a pair of $\U$. We declare
$\theta$ unstable. We set
\[
\gamma_\theta =  i_{0,\theta}^\S(\gamma_0)
\]
and
\[
\M_{\theta+1}^\S= \text{ the direct limit of $i_{\gamma+1,\gamma'+1}^\S(\M_{\gamma+1}^\S)$, for $\gamma <_S \gamma' <_S 
\theta$.}
\]
We also let
\[
\sigma_\theta= \text{ common value of $i_{\gamma,\theta}^\S(\sigma_\gamma)$, for $\gamma <_S 
\theta$ sufficiently large.}\footnote{We abuse the notation a bit here when we write $i^\S_{\gamma,\theta}(\sigma_\gamma)$ as $\sigma_\gamma$ is technically not an element of $\M^\S_\gamma$, but the meaning of $\sigma_\theta$ should be clear.}
\]
It is easy then to see that
$\Phi_\theta = \la \M_\theta^\S, \M_{\theta+1}^\S,
 \sigma_\theta, \gamma_\theta \ra$ witnesses $(\dag)_\theta$ holds.

If (I) holds, then we stop the construction of $\S = \S_{\nu,l}$ and move on
to $\S_{\nu,l+1}$. If (II) holds, we stop the construction of $\S$ and do not
move on. If neither holds, we let $E_{\theta+1}^\S$ be the extender on
the $\M_{\theta+1}^\S$ sequence that represents its first disagreement
with $M_{\nu,l}$, and set
\[
\lambda^\S_{\theta+1} = \lambda(E_{\theta+1}^\S),
\]
\[
\lambda^\S_\theta = \inf(\lambda_{\theta+1}, \gamma_\theta).
\]
It then is routine to verify $(\dagger)_{\theta+1}$.

This finishes our construction of $\S=\S_{\nu,l}$ and $\T$. Note that every extender used 
in $\S$ is taken from a stable node and every stable node, except the last model of $\S$ 
contributes exactly one extender to $\S$. The last model of $\S$ is stable.

\begin{remark}
    It is possible in general
  that  $\xi$ is unstable, $\S$-pred$(\gamma+1)=\xi$, and 
  crt$(E^\S_\gamma)=\lambda_F$ where $F$ is the last extender of $\M^\S_\xi|\gamma_\xi$. 
  In this case, $(\beta,k) = (\lh(F),0)$. The problem then is that
   $\M^\S_{\gamma+1}$ is not an lpm, because its last extender
   $i_{\xi,\gamma +1}(F)$ has a missing whole initial segment,
   namely $F$. This is the \textit{JSZ anomaly}. 

   In the situations of least-disagreement comparisons, when a JSZ extender occurs, the Schindler-Zeman solution is to just continue comparing anyway. Suppose a JSZ extender, which fails the Jensen ISC, has the form $ E^\S_{\eta+1}= L\circ E$ is used on the $\mathcal{U}$-side. Say our phalanx is $(M,Q,\gamma_0)$ where $E^\mathcal{S}_\eta = L$ with crt$(L)=\lambda_E$ and $E = E^M_{\gamma_0}$, then $E\notin M^\S_{\eta+1} = \textrm{Ult}(M|\gamma_0,L)$ and $\M^\S_{\eta+2} = \textrm{Ult}(M,E^\S_{\eta+1}) = \textrm{Ult}(M,L\circ U)$.  Note that the Jensen ISC holds everywhere on $\U$, then the main branch of $\mathcal{U}$ uses first $E$ and then $L$. So $L$ is used on both $\S$ and $\U$; this cannot happen in a comparison by least-extender disagreement. Since our comparison is against a common background construction, the SZ solution does not seem to work here; some care must be taken when JSZ anomalies occur. The JSZ anomaly affects how we lift our problematic phalanx and forces us to modify the rules of $\mathcal{S}_{\nu,l}$ to enable to prove the comparison terminates and other aspects of the comparison. See \cite{steel2025JSZ} for how to handle the JSZ anomalies in the context of comparison against a background construction.

   The JSZ anomaly does not occur in the comparison we are describing in this paper. The reason is that we chose $X$ so that $\gamma_X$ does not index an extender on the $\Q$-sequence and this fact propagates to the lifted phalanxes.
\end{remark}

\begin{claim}\label{claim:termination}
For some $(\nu,l)\leq (\eta_0,k_0)$, the construction of $\S_{\nu,l}$ stops for reason (II).
\end{claim}
\begin{proof}
This is similar to the proof of \cite[Lemma 9.6.2]{normalization_comparison}.

\end{proof}

Fix  $(\nu,l)\leq (\eta_0,k_0)$ such that the construction of $\S=\S_{\nu,l}$ 
terminates at a stable $\theta$ such that for some $\gamma$, $\M^\S_\theta \unlhd \M^{\U_{\nu,l}}_\gamma$.  Let $\S=\S_{\nu,l}$, 
$\U = \U_{\nu,l}$,
and let $\gamma$ be the least such that $\M^\S_\theta\unlhd \M^\U_\gamma$. 
We have lh$(\S)=\theta+1$, and $[\rm{rt}$$(\theta),\theta]_\S$ does not drop in model or degree.

\begin{claim}\label{claim:unstable}
For some unstable $\xi$, rt$(\theta) = \xi+1$.
\end{claim}
\begin{proof}
    Suppose the claim is false. Then
\begin{enumerate}
\item[(i)] either rt$(\theta) = \xi \geq 0$ for $\xi$ unstable,
\item[(ii)] or rt$(\theta) = \xi$ for $\xi$ stable and is a limit of unstable $\xi' < \xi$,
\end{enumerate}

In either case, the embeddings $i^\S_{0,\xi}$ and $i^\S_{\xi,\theta}$ exist, so $i^\S_{0,\theta}$ exists. Let $P = \M^\S_\theta$ and $R = \M^\U_\gamma$. Let $i = i^\S_{0,\theta}$, $j = i^\U_{0,\gamma}$, $\sigma: P\rightarrow P^* = \M^\T_\theta$ be the copy map, and $i^*: M\rightarrow P^*$ be the branch embedding of $[0,\theta]_T$ if these maps are defined.

By Dodd-Jensen, $P= R$, $i,j, i^*$ are defined, and $i= j$. Let $H$ be the first extender used along $[0,\theta]_S$ and $K$ be the first extender used along $[0,\gamma]_U$. Since $i=j$, $K, H$ are compatible. 


Since we can recover branch extenders from branch embeddings, we have $$e^\S_\theta = e^\U_\gamma.$$ Let $\eta \leq_S \theta$ be the least stable. Then $e^\S_\eta = e^\S_\theta\rest \delta = e^\U_\gamma \rest \delta$ for some $\delta$.\footnote{If rt$(\theta)=0$ then $e^\S_\eta$ consists of a single extender $H$ and $\delta =1$. This is a special case and is simpler.} There is a $\tau\leq_U \gamma$ such that $e^\U_\tau = e^\U_\gamma \rest \delta = e^\S_\eta$. So $\M^\S_\eta = \M^\U_\tau$. By pullback consistency,\footnote{Pullback consistency follows from other properties of mouse pairs specified in \cite{normalization_comparison}.} we easily get that $\Lambda_\eta = \Sigma^\U_\tau$. If $\eta$ is a limit ordinal, then by the rule of $\S$, we declare $\eta$ unstable, contradicting our assumption. So let $S-pred(\eta) = \mu$; then $\mu$ is unstable by the minimality of $\eta$. But then we declare $\eta$ unstable by the rule of $\S$ at successor stages. Again this is a contradiction.
\end{proof}



Let $\xi$ be as in Claim \ref{claim:unstable}, and $\tau$ be such that $(\M^\S_\xi,\Lambda_\xi) = (\M^\U_\tau,\Sigma^\U_\tau)$. We have that 
\begin{itemize}
\item $(\M^\S_\theta,\Lambda_\theta) \unlhd (M_{\nu,l},\Omega_{\nu,l}) \unlhd (\M^\U_\gamma, \Sigma^\U_\gamma)$ for some $(\nu,l)\leq (\eta_0,k_0)$.
\item $[\xi+1,\theta]_S$ does not drop in model or degree.
\item The tuple $(\M^\S_\xi,\M^\S_{\xi+1},\sigma_\xi, \gamma_\xi)$ witnesses $(\dag)_\xi$.
\end{itemize}
Let $P = \M^\S_\theta$ and $R = \M^\U_\gamma$. See Figure \ref{Comparison} for the relevant diagram of the comparison.

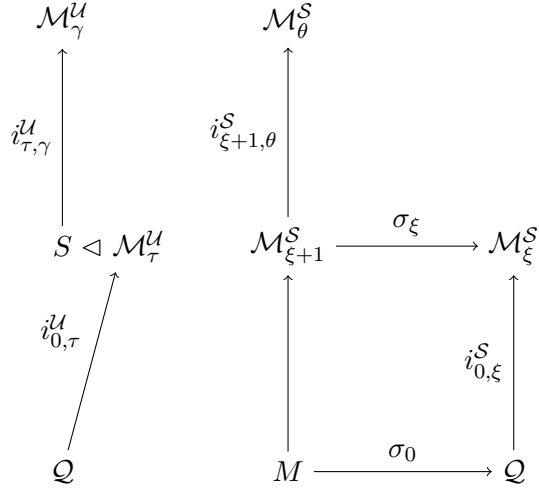
\begin{figure}
\centering
\begin{tikzpicture}[node distance=3cm, auto]
  \node (A) {$\Q$};
  \node (B) [above of=A] {$S$};
  \node (C) [node distance=0.8cm, right of=B] {$\lhd \ \M^\U_\tau$};
  \node (D) [node distance =3cm,above of=B] {$\M^\U_\gamma$};
  
  \draw[->] (A) to node {$i^\U_{0,\tau}$} (C);
  \draw[->] (B) to node {$i^\U_{\tau,\gamma}$} (D);

  \node (G) [right of=A]{$M$};
  \node (H) [above of=G]{$\M^\S_{\xi+1}$};
  \node (I) [right of=G]{$\Q$};
  \node (J) [above of=I]{$\M^\S_{\xi}$};
  \node (K) [node distance= 3cm, above of=H]{$\M^\S_\theta$};

  \draw[->] (G) to node {$\sigma_0$} (I);
  \draw[->] (H) to node {$\sigma_\xi$} (J);
  \draw[->] (G) to node {} (H);
  \draw[->] (I) to node {$i^{\S}_{0,\xi}$} (J);
  \draw[->] (H) to node {$i^\S_{\xi+1,\theta}$} (K);
  
\end{tikzpicture}
\caption{Diagram of the comparison argument: $\M^\S_\theta\unlhd \M^\U_\gamma$, $\M^\S_\xi = \M^\U_\tau$, crt$(i^\S_{\xi+1,\theta})>\gamma_\xi = \crt(\sigma_\xi)$, and the embedding $i^\U_{\tau,\gamma}$ acts on $S\lhd \M^\U_\tau$ where $S$ the the collapsing structure of $\gamma_\xi$.}
\label{Comparison}
\end{figure}

\begin{claim}\label{claim:no_drop}
\begin{enumerate}
\item[(i)] $\tau \leq \eta <\gamma$ implies lh$(E^\U_\eta)\geq \gamma_\xi$ and if $\eta < \tau$ then $\lambda(E^\U_\eta)\leq \gamma_\xi$.
\item[(ii)] $P \lhd R$. 
\end{enumerate}
\end{claim}
\begin{proof}
    The first clause of part (i) follows from the agreement between $P, M_{\nu,l}, R, \M^\S_\xi = \M^\U_\tau$, more precisely, these models agree up to $\gamma_\xi$, therefore, for any $\tau\leq \eta < \gamma$, lh$(E^\U_\eta)\geq \gamma_\xi$. For the second clause, note that by construction, $e^\U_\tau = e^\S_\xi$\footnote{$e^\U_\tau$ is the extender sequence used along $[0,\tau]_U$ and similarly $e^\S_\xi$ is the extender sequence used along $[0,\xi]_S$.} and lh$(e^\U_\tau) = \lh(e^\S_\xi)\leq \gamma_\xi$; the fact that $\lh(e^\S_\xi)\leq \gamma_\xi$ follows from the rules of lifting phalanx, all extenders used in $e^\S_\xi$ have critical point less than $\gamma_\xi$ and therefore, their length has to be $\leq \gamma_\xi$. If $\eta < \tau$ then $\lambda(E^\U_\eta) \leq  \lh(E^\U_\eta)\leq \lh(e^\U_\tau)\leq \gamma_\xi$ as desired.

    To see part (ii), suppose $P= R$. First note that the branch $[0,\gamma]_U$ cannot drop because $P$ is a $\sf{ZFC}^-$-model while if $[0,\gamma]_U$ drops then $R$ is not. So $[0,\gamma]_U$ has no drop. We may assume $\gamma > \tau$ as otherwise, $P=R=\M^\U_\tau$ but $\M^\U_\tau$ has a collapsing level for $\gamma_\xi$ while $\gamma_\xi$ is a cardinal in $P$. Contradiction. So $\gamma>\tau$. Let $\tau'\in [0,\gamma]_U$ be the least element of this branch $\geq \tau$. It is easy to see that $\M^\U_\tau$ and $\M^\U_{\tau'}$ agree up to their common $\gamma_\xi^+$ and the branch embedding $i^\U_{\tau',\gamma}$ has critical point $>\gamma_\xi$. But this is also a contradiction because $\M^\U_\tau$, hence $\M^\U_{\tau'}$ and $R$, has a collapsing level for $\gamma_\xi$ while $\gamma_\xi$ is a cardinal in $P$.
\end{proof}

\begin{claim}\label{claim:taumainbranch}
$\tau\in [0,\gamma]_U$ and letting $\epsilon+1\in [\tau,\gamma]_U$ be the $U$-successor of $\tau$ on the branch $[0,\gamma]_U$, crt$(E^\U_\epsilon)>\gamma_\xi$.
\end{claim}
\begin{proof}
    Let $\alpha <_U \beta+1 \in [0,\gamma]_U$ be such that $\alpha \leq \tau$, $\beta+1 > \tau$, and $\alpha$ is the $U$-predecessor of $\beta+1$.  Suppose crt$(E^\U_\beta)<\gamma_\xi$, then we claim that
 \\
 \\
\noindent \textbf{Subclaim.}\label{claim:strong}
    crt$(E^\U_\beta)$ must be a strong cardinal in $\M^\U_\a$ and $i^\U_{\alpha,\beta+1}$ exists.
\begin{proof}
First note that by an easy induction on $\beta\geq \tau$, crt$(E^\U_\beta)$ must be a strong cardinal in $\M^\U_\tau$. Second, note that since $\alpha\leq \tau$, for $i\in \{0,1\}$, $\kappa_i^{\M^\U_\alpha}\leq \kappa_i^{\M^\U_\tau}$.  \footnote{Note is that it can't happen that crt$(E^\U_\beta)=\kappa_0^{\M^\U_\tau}$ and $\kappa_0^{\M^\U_\tau}<\kappa_0^{\M^\U_\alpha}<\kappa_1^{\M^\U_\alpha}<\kappa_1^{\M^\U_\tau}$. This is because this means $\lambda(E^\U_{\alpha-1})>\kappa_0^{\M^\U_\tau}$, therefore, since crt$(E^\U_\beta)=\kappa_0^{\M^\U_\tau}$, the rule of $\U$ implies that $E^\U_\beta$ must be applied to model earlier than $\alpha$. Contradiction.}

So let us assume $\kappa = \crt(E^\U_\beta) = \kappa_i^{\M^\U_\tau}$ for some $i$ and $ \kappa_i^{\M^\U_\alpha} <  \kappa_i^{\M^\U_\tau}$.  Since $\M^\U_{\beta+1}|\gamma_\xi = P|\gamma_\xi$, in $P$, $\kappa_i^{\M^\U_\alpha}$ is strong to $\kappa_i^{\M^\U_\tau}$; since $\kappa_i^{\M^\U_\tau}$ is strong in $P$, $\kappa_i^{\M^\U_\alpha}$ is strong in $P$ as well.  We get that $$P\models ``\textrm{ there are } n+1 \textrm{ strong cardinals}."$$ This is a contradiction. So $ \kappa_i^{\M^\U_\alpha} =  \kappa_i^{\M^\U_\tau}$ as desired. Furthermore, this easily gives that $\M^\U_\alpha$ and $\M^\U_\beta$ agrees up to the successor cardinal of $\kappa^{\M^\U_\alpha}$. This gives the second clause and completes the proof of the subclaim.

\end{proof}
    Using the subclaim, let $\kappa = \crt(E^\U_\beta) = \kappa_i^{\M^\U_\alpha}$ for some $i$, and $\lambda = i^\U_{\alpha,\beta+1}(\kappa)$. Then  since lh$(E^\U_\beta) > \gamma_\xi$, $\lambda>\gamma_\xi$ and $\lambda$ is a strong cardinal in $\M^\U_{\beta+1}$. Furthermore, $\lambda  < \lh(E^\U_\beta) \leq o(P)$ and $\M^\U_{\beta+1}||\lambda = P||\lambda$. Since $i^\S_{\xi+1,\theta}$ is above $\gamma_\xi$, the strong cardinals of $P$ are below $\gamma_\xi$. In particular, $$\M^\U_{\beta+1}||\lambda = P||\lambda\models ``\textrm{ there are $n$ strong cardinals.}"$$ But then since $\lambda$ is strong in $\M^\U_{\beta+1}$, $$\M^\U_{\beta+1}\models ``\textrm{ there are $n+1$ strong cardinals}".$$ This is a contradiction.
    
    We have shown that letting $\alpha, \beta+1$ be as above, then crt$(E^\U_\beta)\geq\gamma_\xi$, in fact it is easy to see that crt$(E^\U_\beta) > \gamma_\xi$. If $\tau\notin [0,\gamma]_U$, then $\alpha < \tau$ and hence $\lambda(E^\U_\alpha) \leq \gamma_\xi$, but then crt$(E^\U_\beta) < \lambda(E^\U_\alpha) \leq \gamma_\xi$. Contradiction. This shows $\tau\in [0,\gamma]_U$ and completes the proof of the claim.

\end{proof}

Claim \ref{claim:taumainbranch} and the argument in Claim \ref{claim:no_drop} imply that the branch $[\tau,\gamma]_U$ must drop, in fact, letting $S\lhd \M^\U_\tau$ be the collapsing structure for $\gamma_\xi$, $\M^\U_{\epsilon+1}=\Ult(S,E^\U_\epsilon)$. In other words, the branch $[\tau,\gamma]_U$ is based on $S$. Let $i: S\rightarrow R$ be the iteration embedding and $j: \M^\S_{\xi+1}\rightarrow P$ be the iteration embedding $i^\S_{\xi+1,\theta}$. We have that by pullback consistency, $(\Sigma^\U_{\gamma})^i = (\Sigma^\U_\tau)_S$ and $\Lambda_\theta^j = \Lambda_{\xi+1}$. Claims \ref{claim:no_drop} and \ref{claim:taumainbranch} easily imply that $\Lambda_{\xi+1}$ is projective in $(\Sigma^\U_\tau)_S$. Similarly, letting $S^*\lhd \Q$ be the collapsing structure for $\gamma_0$, $\Lambda_1$ is projective in $(\Sigma^\U_0)_{S^*}$.

The above gives us the following: if $j: \P[g]\rightarrow M$ is a generic ultrapower induced by a generic $G\subset \mathbb{Q}_\delta$, then letting $\Psi$ be the iteration strategy for the collapsing structure $Q\lhd M$ of $\omega_1^{\P[g]}$, every $A\in \Gamma_\infty^{\P[g][G]}$ is projective in $\Psi$. This means $\Gamma_\infty^{\P[g][G]}\subsetneq j(\Gamma_\infty^{\P[g]})$. Therefore, Tower Sealing fails in $\P[g]$.  
\end{proof}

\begin{remark}
    The proof of the above theorem, particularly Claim \ref{claim:taumainbranch} uses the assumption the set of strong cardinals in the model is finite. The proof can be generalized in a straightforward way to models in which the set of strong cardinals is discrete and has order smaller than the least measurable cardinal. It is not clear how to generalize this proof of hod mice with strong cardinals which reflect the class of strong cardinals. 
\end{remark}

\section{Questions}\label{sec:questions}

\begin{question}
\begin{itemize}
\item Can Tower Sealing hold in a generic extension of a hod mouse?
\item Is Tower Sealing consistent relative to $\sf{ZFC} + $ ``there is a Woodin limit of Woodin cardinals"?
\end{itemize}    
\end{question}
As mentioned in the previous section, it is plausible that some form of Tower Sealing may be shown to hold in hod mice with strong cardinals which reflect the class of strong cardinals; however, the argument has to be different from what is given in this paper. A natural conjecture is

\begin{conjecture}
    Suppose $(\P,\Psi)$ is a hod pair such that $\P\models$ ``there is a strong cardinal which reflects the class of strong cardinals and there is a proper class of Woodin cardinals". Let $\kappa$ be the least strong cardinal which reflects the class of strong cardinals and let $g\subseteq Coll(\omega,\kappa^+)$, then $$\P[g] \models ``\forall \delta \ \textrm{ if } \delta \textrm{ is Woodin, then Tower Sealing holds at } \delta."$$
\end{conjecture}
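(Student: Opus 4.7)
The plan is to follow the framework of Theorem~\ref{thm:weakTS} (the proof of Partial Tower Sealing in hod mice) and strengthen the conclusion from Partial Tower Sealing to full Tower Sealing, namely that $\Gamma_{g*G}^\infty = j_G(\Gamma_g^\infty)$. By Theorem~\ref{thm:weakTSimpliesSealing} and Corollary~\ref{cor:hypoholds}, Partial Tower Sealing already holds in this setting, so the only new task is to promote the elementary embedding $l : L(\Gamma_{g*G}^\infty) \to L(j_G(\Gamma_g^\infty))$ coming from Partial Tower Sealing to an equality of pointclasses. The role of the reflecting strong cardinal $\kappa$ is, roughly, to ensure that any iteration strategy appearing in $j_G(\Gamma_g^\infty)$ can already be captured by a mouse living in $\P[g]$, thereby blocking the failure witness produced in the proof of Theorem~\ref{thm:TSfail}.

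First I would set up the two parallel $\Gamma$-genericity iterations of Section~\ref{sec:PartialTS}: one producing $L(\Gamma_{g*G}^\infty)$ as the derived model of $M_0^\omega$, the other producing $L(j_G(\Gamma_g^\infty))$ as the derived model of $N_0^\omega$, arranged so that $M_0^1 = N_0^1$. Next I would establish a strengthening of Lemma~\ref{lem:plusequal} showing that the two direct limits $\M_\infty^G$ and $\M_\infty^M$ agree not only up to $(\Theta)^{+}$ but up through a strong cardinal $\kappa^{*} > \Theta$ of both models. The reflection hypothesis on $\kappa$ is exactly what should let us push the countable hull argument of Lemma~\ref{lem:plusequal} much higher: by choosing the countable $X \prec H_\gamma$ so that $\gamma_X$ sits above a strong cardinal of $\P$ that itself reflects the class of strong cardinals, one obtains a $\Psi^{g*G}$-iterate and a $j_G(\Psi^g)$-iterate whose full extender sequences (not merely their $\Theta$-initial segments) are forced to coincide by condensation and fullness preservation.

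Once such an extended agreement is in hand, the strategy is to convert the failure scenario of Theorem~\ref{thm:TSfail} into a successful capture. Given a mouse level $\M \lhd j_G(\P)$ with $\rho_\omega(\M) < \omega_1^{\P[g]}$ and its canonical strategy $\Sigma_\M \in j_G(\Gamma_g^\infty)$, the reflection of $\kappa$ should produce a corresponding $\M^{*} \lhd \P$ whose strategy is Wadge-equivalent to $\Sigma_\M$, so that $\Sigma_\M$ is already in $\Gamma_{g*G}^\infty$. Since both pointclasses are Wadge-generated by strategies of countable elementary substructures (as described in Section~\ref{sec:der_model} and Lemma~\ref{ub to strategy}), this would yield $\Gamma_{g*G}^\infty = j_G(\Gamma_g^\infty)$.

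The hard part will be carrying out the analogue of the phalanx comparison from Theorem~\ref{thm:TSfails2} under the reflection hypothesis. In that proof, the finiteness of the set of strong cardinals was used crucially in Claim~\ref{claim:taumainbranch} (via the subclaim) to derive a contradiction when the critical point of a branch extender was a strong cardinal, thereby forcing Tower Sealing to fail. With $\kappa$ reflecting the class of strong cardinals, one expects instead that the comparison terminates with the phalanx side agreeing with the iterate and no drop past $\gamma_\xi$, so the main technical challenge is to prove that the resulting branches $[0,\gamma]_U$ and $[\mathrm{rt}(\theta),\theta]_S$ indeed survive the ``critical-point-is-strong'' case. This will require a pullback-consistency argument applied to tail strategies at strong cardinals below $\kappa$ guaranteed by reflection, together with a careful treatment of the JSZ anomaly in the phalanx lifting so that reflection actually produces the mouse $\M^{*} \lhd \P$ mentioned above and not merely an elementary analogue of it.
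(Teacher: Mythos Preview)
The statement you are attempting to prove is a \emph{conjecture}, not a theorem: the paper states it in Section~\ref{sec:questions} as an open problem, with no proof. Immediately before the conjecture the authors write that ``it is plausible that some form of Tower Sealing may be shown to hold in hod mice with strong cardinals which reflect the class of strong cardinals; however, the argument has to be different from what is given in this paper.'' So there is no paper proof to compare your proposal against.

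What you have written is not a proof but a research outline, and you are candid about this (``the hard part will be\ldots'', ``one expects\ldots'', ``should produce\ldots''). The central gap is precisely the one the authors flag: your plan is to rerun the phalanx comparison of Theorem~\ref{thm:TSfails2} and hope that reflection of strong cardinals reverses the outcome of Claim~\ref{claim:taumainbranch}, but you give no mechanism for this. In that claim the contradiction comes from counting strong cardinals in $P$; with a reflecting strong the counting argument simply disappears, and you are left with no control over whether $\crt(E^\U_\beta)$ lies above or below $\gamma_\xi$. Without that, you cannot conclude that the branch $[\tau,\gamma]_U$ drops to the collapsing level $S$, and the whole reduction of $\Lambda_{\xi+1}$ to $(\Sigma^\U_\tau)_S$ evaporates. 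Your proposed strengthening of Lemma~\ref{lem:plusequal} to agreement past a strong cardinal above $\Theta$ is exactly what the authors note they cannot do (see the paragraph following the proof of Theorem~\ref{thm:TSfail}); fullness preservation and condensation give agreement up to $(\Theta)^+$ but there is no evident reason the hull argument should push further, since above $\Theta$ the two direct limit systems are governed by genuinely different strategies $\Psi^{g*G}$ and $j_G(\Psi^g)$.
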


\medskip

\bibliographystyle{plain}
\bibliography{WeakTS}
\end{document}